\documentclass[11pt]{article}

\usepackage[margin=1in]{geometry}
\usepackage{amsthm}
\usepackage{amsmath}
\usepackage{amssymb}
\usepackage{mathrsfs}
\usepackage{euscript}
\usepackage{graphicx}
\usepackage{pgf,tikz}
\usepackage{float}
\usepackage{mathtools}
\usepackage{color}
\usepackage[noadjust]{cite}
\usepackage{enumerate}
\usepackage{multicol}
\usepackage{centernot}
\usetikzlibrary{arrows}
\usepackage{comment}
\usepackage{hyperref}
\usepackage{xcolor}
\usepackage{tikz}
\usepackage{ulem}
\theoremstyle{definition}
\newtheorem{theorem}{Theorem}[section]
\newtheorem{corollary}[theorem]{Corollary}
\newtheorem{proposition}[theorem]{Proposition}
\newtheorem{lemma}[theorem]{Lemma}

\newtheorem{statement}[theorem]{Statement}

\theoremstyle{definition}
\newtheorem{example}[theorem]{Example}
\newtheorem{definition}[theorem]{Definition}
\newtheorem{question}{Question}

\newtheorem{remark}[theorem]{Remark}
\definecolor{cInf}{RGB}{226,102,102}   % infinitely recurrent
\definecolor{cPer}{RGB}{ 10,110,255}   % persistently recurrent
\definecolor{cUni}{RGB}{164,  0,220}   % uniformly recurrent
\definecolor{cReg}{RGB}{245, 105, 30}   % regularly recurrent
\definecolor{cQ}{RGB}{235,150,  0}     % Q(k)->infty
\definecolor{cRen}{RGB}{150,100, 45}   % infinitely renormalizable
\definecolor{cFib}{RGB}{150,150,150}   % Fibonacci like
\definecolor{cLong}{RGB}{ 95,185, 65}  % long-branched
\definecolor{c5}{RGB}{ 20, 20,140}  % wild attractor
\definecolor{cMono}{RGB}{  0,125, 70}  % not critically monotonic
\definecolor{cSAM}{RGB}{ 70,205,195}   % strange adding machine
\definecolor{cMin}{RGB}{255,  0,0}   % minimal homeomorphism
\definecolor{cSlow}{RGB}{  0,190,235}  % slowly recurrent
\definecolor{cCE}{RGB}{100,100,100} % Collet-Eckmann

\newcommand{\N}{\mathbb{N}}

\newcommand{\RR}{\mathcal{R}}
\renewcommand{\epsilon}{\varepsilon}
\newcommand{\invlim}{\varprojlim}

\newcommand{\Lori}[1]{\textcolor{blue}{#1}}
\newcommand{\Jernej}[1]{\textcolor{teal}{#1}}

\title{Recurrence, symbolic dynamics, and wild attractors for unimodal maps}
\author{
Lori Alvin\thanks{ORCID: https://orcid.org/0000-0001-7557-2669.\\
Department of Mathematics, Furman University, Greenville, SC 29613, USA.\\
Email: \texttt{lori.alvin@furman.edu}}
\and
Jernej \v{C}in\v{c}\thanks{ORCID: https://orcid.org/0000-0001-8516-6023.\\
Department of Mathematics and Computer Science, Faculty of Natural Sciences and Mathematics,
University of Maribor, Koro\v{s}ka 160, 2000 Maribor, Slovenia; and
Abdus Salam International Centre for Theoretical Physics (ICTP), Trieste, Italy.\\
Email: \texttt{jernej.cinc@um.si}
}}

\begin{document}
	\maketitle

\begin{abstract}
In this paper, we construct families of unimodal maps with wild Cantor attractors and study, via symbolic dynamics, the interplay between recurrence properties of the critical orbit. Using kneading and co-kneading techniques, we provide a symbolic characterization of persistent recurrence in terms of cutting and co-cutting times, and relate it to other recurrence conditions, including the Collet--Eckmann condition. As an application, we obtain uncountable families of non-renormalizable unimodal maps of sufficiently high critical order admitting wild attractors. We also analyze the structure of postcritical dynamics in recurrent regimes, constructing examples with embedded odometers and with minimal homeomorphic dynamics on the critical $\omega$-limit set, and showing that a minimal homeomorphism together with regular recurrence does not imply conjugacy to an adding machine. Finally, we prove that longbranched combinatorics is incompatible with persistent recurrence, thereby clarifying the boundary between different recurrence regimes.
\end{abstract}

\begingroup
\renewcommand{\thefootnote}{}
\footnotetext{2020 Mathematics Subject Classification. Primary 37E05, 37B20; Secondary 37B10, 37D45.}
\addtocounter{footnote}{-1}
\endgroup

\tableofcontents

\section{Introduction}

We study the relation between recurrence properties of the critical orbit and the existence of wild Cantor attractors for unimodal maps. Wild attractors (also called absorbing Cantor attractors) were first constructed by Bruin, Keller, Nowicki, and van Strien \cite{BKNvS1996}, who showed that certain non-renormalizable unimodal maps admit a Cantor attractor that attracts Lebesgue-almost every orbit while failing to be a topological attractor. Subsequent work of Bruin \cite{Bruin_Absorbing_Cantor} showed that Fibonacci-type combinatorics provide a natural setting for their existence. More broadly, the theory of one-dimensional dynamics shows that the behavior of a unimodal map is largely governed by the recurrence properties of the critical orbit, a perspective developed in Lyubich’s work on quadratic maps \cite{Lyubich2002} and in the classification theory of Avila--Lyubich \cite{AvilaLyubich2006} and Kozlovski--Shen--van Strien \cite{KSVS2007}.

A central theme in this setting is the distinction between recurrence regimes associated with weak expansion and those associated with strong expansion. On the one hand, sufficiently strong growth along the critical orbit leads to stochastic behavior, typically expressed through the Collet--Eckmann condition and the existence of absolutely continuous invariant measures. On the other hand, in the absence of such expansion, recurrence may lead to Cantor attractors, including wild attractors. This makes it natural to ask which recurrence conditions are genuinely relevant for the existence of wild attractors, and how they are reflected in the symbolic combinatorics of the critical orbit.

The main purpose of this paper is to develop a symbolic framework for comparing such recurrence conditions and to apply it to the study of wild attractors and related postcritical dynamics. Our first main result gives a symbolic characterization of persistent recurrence in terms of return, cutting, and co-cutting times, building on kneading theory in the sense of Milnor--Thurston \cite{MilnorThurston88} and on the kneading and co-kneading techniques developed by Hofbauer and Keller \cite{Hofbauer,HofbuerKeller}. We then use this framework in several directions. First, we construct uncountable families of non-renormalizable unimodal maps of sufficiently high critical order that admit wild Cantor attractors, obtaining in particular dense families in the corresponding parameter space. Second, we analyze the postcritical Cantor dynamics arising in recurrent regimes, including examples with embedded odometers and examples for which the restriction to the critical $\omega$-limit set is a minimal homeomorphism but not conjugate to an adding machine. Third, we clarify the position of longbranched and slowly recurrent combinatorics within this picture.

The notion of persistent recurrence was introduced by Blokh and Lyubich \cite{Blokh_Lyubich_Measurable_Dynamics_of_S_unimodal} as a real one-dimensional counterpart of Yoccoz’ puzzle condition \cite{Yoccoz}; see also \cite{KSVS2007}. In the wild-attractor setting, persistent recurrence plays a distinguished role: the known constructions of wild attractors occur in persistently recurrent regimes, and the available structure theory indicates that the existence of a wild attractor forces strong recurrence of the critical orbit. Our first main theorem, Theorem~\ref{Theorem: Main Symbolic Characterization}, provides a symbolic characterization of persistent recurrence. We also give symbolic characterizations of other recurrence conditions relevant to wild attractors, including critical monotonicity and Bruin’s condition (C5); see Theorems~\ref{Theorem: C6 characterization} and \ref{Theorem: C5 characterization}. As a byproduct, we correct examples in \cite{Bruin_Absorbing_Cantor} that were intended to separate these conditions but are not shift-maximal, and replace them by valid kneading examples; see Examples~\ref{Example: Bad C6 not C5} and \ref{Example: Fixed C7 not C6}. We also correct an error in \cite{FPEP} by providing a corrected proof of \cite[Theorem 4.13]{FPEP}; see Theorem~\ref{Theorem: fixed FPEP endpoint characterization}.

A principal application of our symbolic approach is a new description of families of unimodal maps with wild attractors. To our knowledge, the only previously known general condition guaranteeing the existence of wild attractors for sufficiently high critical order is the one given by Bruin \cite{Bruin_Absorbing_Cantor}. Using a different symbolic construction, inspired in part by the example of Li and Shen \cite{LiShen}, we construct an uncountable family of unimodal maps with wild attractors that approximate, arbitrarily well in the supremum metric, any non-renormalizable unimodal map in the logistic family. In this sense we obtain a dense set of unimodal maps with wild attractors for sufficiently high critical order; see Corollary~\ref{cor:Wild attractors dense}. As a further consequence, we obtain uncountably many pairwise non-conjugate $S$-unimodal maps $f$ such that $f|_{\omega(c)}$ is conjugate to a prescribed odometer and $f$ has a wild Cantor attractor; see Corollary~\ref{cor:Wild attractors odometer}.

We also study how recurrence interacts with embedded odometers and minimal postcritical dynamics. Embedded odometers arise naturally in unimodal dynamics both in infinitely renormalizable and in non-renormalizable settings; see, for example, \cite{BlockKeeslingMisiurewicz,Jones2009,Alvin_ftcp,Alvin2021,BruinKellerStPierre1997}. Since a dynamical system is conjugate to an adding machine if and only if it is minimal and every point is regularly recurrent \cite{BlockKeesling2004}, it is natural to ask whether regular recurrence of the critical point forces odometer dynamics whenever the postcritical system is a minimal homeomorphism. We show that this is not the case. In Section~\ref{sec: regularly recurrent minimal homeomorphism not odometer} we construct unimodal maps with regularly recurrent critical point for which $f|_{\omega(c)}$ is a minimal homeomorphism but is not conjugate to an adding machine; see Theorem~\ref{thm: regularly recurrent minimal not odometer}.

Another related theme of the paper is the relationship between recurrence and expansion. From the dynamical point of view, one expects incompatibility between odometer-type postcritical behavior and the Collet--Eckmann condition: the former is minimal and equicontinuous, whereas the latter expresses exponential growth along the critical orbit. In Section~\ref{Section:slow recurrence} we study slowly recurrent kneading sequences for $S$-unimodal maps. We construct an example of a slowly recurrent kneading sequence corresponding to an $S$-unimodal map $f$ for which $f|_{\omega(c)}$ is an odometer; see Example~\ref{Example: slowly recurrent odometer}. We also reprove, by symbolic means and without ergodic-theoretic arguments, a theorem of Bruin \cite[Theorem 2]{Bruin_QuasiSymmetry}, showing that persistent recurrence excludes slow recurrence; see Proposition~\ref{Proposition: Persitently recurrent not slowly recurrent}. In addition, Proposition~\ref{prop: linearly recurrent not slowly recurrent} shows that linearly recurrent kneading sequences are never slowly recurrent.

Finally, we study longbranched unimodal maps and their relation to recurrence. Longbranchedness corresponds to a bounded-geometry regime in which the Hofbauer tower domains remain uniformly large; equivalently, the kneading map $Q$ is bounded. In Section~\ref{section: Longbranchdness} we prove that longbranchedness is incompatible with persistent recurrence: if $Q$ is bounded and the critical point is infinitely recurrent (i.e. recurrent but not periodic), then it is not persistently recurrent; see Theorem~\ref{thm:longbranched not persistently recurrent}. As a consequence, in the longbranched infinitely recurrent regime the set of folding points strictly contains the set of endpoints in the inverse limit, correcting the statement in \cite{FPEP} that this difference might be empty. We also show that boundedness of $Q$ does not rule out complicated co-kneading behavior, and we construct both non-uniformly recurrent and uniformly recurrent longbranched examples. In particular, our Sturmian-type examples correct \cite[Proposition 2]{Outershark}: in that setting the restriction to the critical $\omega$-limit set is minimal but not a homeomorphism; see Proposition~\ref{prop:rotnumber}.

The paper concludes with a diagram summarizing the inclusion relations among the recurrence conditions considered here and with open questions suggested by our study.

\section{Preliminaries}

\subsection{Unimodal maps}

Let $\N:=\{1,2,3,\ldots\}$. A \emph{unimodal map} is a continuous map  $f \colon [0,1] \to [0,1]$
for which there exists a point \( c \in (0,1) \) such that \( f|_{[0,c]} \) is strictly increasing and 
\( f|_{[c,1]} \) is strictly decreasing. The point \( c \) is called the \emph{critical point} (sometimes also called the \emph{turning point}), and for all
\( i \in \mathbb{N} \) we set \( c_i = f^{i}(c) \). 

When \( f \) is \( C^3 \) its \emph{Schwarzian derivative} is defined by
\[
Sf(x)=\frac{D^3f(x)}{Df(x)}-\frac32\left(\frac{D^2f(x)}{Df(x)}\right)^2<0
\]
whenever \(Df(x)\neq0\).

The two most common families of unimodal maps with negative Schwarzian derivative are the \emph{symmetric tent family} and the \emph{logistic family}.  
A symmetric tent map \( T_s \colon [0,1] \to [0,1] \) with parameter \( s \in [0, 2] \) is given by
\[
T_s(x) = 
\begin{cases}
s x, & x < \tfrac12, \\
s (1 - x), & x \ge \tfrac12.
\end{cases}
\]
The logistic map \( g_a \colon [0,1] \to [0,1] \) with parameter \( a \in [0,4] \) is defined by
\[
g_a(x) = a x (1 - x).
\]

A unimodal map $f:[0,1]\to [0,1]$ is called \emph{$S$-unimodal} provided $f$ is $C^3$, has negative Schwarzian derivative, has non-flat turning point (i.e. finite critical order), and $|f'(0)| > 1$. We note that maps in the symmetric tent family are not $S$-unimodal, whereas maps in the logistic family with \(a>1\) are $S$-unimodal.
For the remainder of this paper, we assume that \( f \) is a unimodal map with
\( c_2 < c < c_1 \) and \( c_2 < c_3 \). Then the interval \([c_2, c_1]\) is forward invariant under $f$ and is called the \emph{dynamical core} of \( f \); the interesting dynamics for such a unimodal map take place in the core.

For a unimodal map $f$ and a point $x\in [0,1]$, we may define the \emph{itinerary of $x$ under $f$} by $I(x) = I_0I_1I_2\cdots$ where
\[I_j = \begin{cases}
    1 & \text{ if } f^j(x)>c,\\
    0 & \text{ if } f^j(x)<c,\\
    C & \text{ if } f^j(x) = c.
\end{cases} 
\]
The \emph{kneading sequence of $f$}, denoted $\mathcal{K}(f)$, is the itinerary of the critical value \(c_1=f(c)\), that is
\[
\mathcal K(f)=I(c_1).
\] 
We use the \emph{parity lexicographical ordering} to compare itineraries: given two itineraries $v\neq w$, find the first position in which $v$ and $w$ differ and compare that position using the ordering $0\prec C \prec 1$ if the number of 1s preceding that position is even (i.e., has \emph{even parity}) and $1\prec C \prec 0$ otherwise. Let $\nu$ be either an infinite sequence of 1s and 0s or a finite sequence of 1s and 0s ending in $C$. We say $\nu$ is \emph{shift-maximal} provided $\sigma^k(\nu) \preceq \nu$ for all $k\in \N$, where $\sigma$ is the \emph{shift map}. Every unimodal map has a shift-maximal kneading sequence, and every shift-maximal sequence corresponds to a unique (up to topological conjugacy) unimodal map.

We will also need the notion of the contraction principle. The contraction principle holds  for maps with no wandering intervals and no attracting periodic orbits \cite{ Blokh_Lyubich_Measurable_Dynamics_of_S_unimodal, deMelo_vanStrien}.

\begin{definition}[{\bf Contraction principle}] For every $\varepsilon > 0$ there exists $\delta > 0$ such that for every 
$n > 0$ and every interval $J$ of length $|J| > \varepsilon$, 
$|f^{n}(J)| > \delta$. 
{In particular, the iterates \(f^n|_J\) cannot be homeomorphisms for every \(n\).}
\end{definition}

Throughout the paper we assume that \( f \) admits no periodic attractor and is not renormalizable.
\begin{definition}
    Map \( f \) is called \emph{renormalizable of period} \( n > 1 \) if there exists an interval \( J \subset [c_2, c_1] \) containing \( c \) such that \( f^n(J) \subset J \). The maximal interval with this property is called a \emph{restrictive} interval.
    \end{definition}

\subsection{Cutting times and kneading map} For a more detailed account of the notions introduced in this subsection, see \cite{Bruin94}.

Let \( f^n \) denote the \emph{$n$-th iterate of \( f \)}. {Let \(J\subset[0,1]\) be the maximal (by inclusion relation) interval such that \(c\in\partial J\) and \(f^n|_J\) is monotone,} then  
\( J \) is called a \emph{central branch of $f^n$}. An iterate \( n \) is called a \emph{cutting time} if the image of the central branch of \( f^n \) contains \( c \).  

The cutting times are denoted by \( S_0, S_1, S_2, \ldots \), where \( S_0 = 1 \) and \( S_1 = 2 \). Note that the difference between two consecutive cutting times is again a cutting time, so we may define the following function
\[
Q \colon \mathbb{N} \to \mathbb{N} \cup \{0\},
\]
called the \emph{kneading map}, by
\[
S_k - S_{k-1} = S_{Q(k)}.
\] 
For the sake of completeness set $Q(0)=0$. If only finitely many cutting times exist, we set \(S_k=\infty\) and \(Q(k)=\infty\) for all sufficiently large \(k\).

If one is given a sequence of cutting times or a kneading map, then the associated unimodal map may be completely determined up to topological conjugacy.

\subsection{Hofbauer towers} The \emph{Hofbauer tower} is the disjoint union $\hat I=\bigsqcup_{n\in \N} D_n$ of open intervals $D_n\subset I$ where $D_1=(c,c_1)$ and inductively
\[
D_{n+1} = 
\begin{cases}
f(D_n), & \text{ if } c \notin \overline{D_n}, \\
(c_{n+1},c_1), & \text{ if } c \in \overline{D_n}.
\end{cases}
\]
Note that $c_n$ is always an endpoint of the interval $D_n$. Observe that the second case occurs precisely when \(n\) is a cutting time.
Moreover, for every $S_k<n\leq S_{k+1}$,
\[
D_n=(c_n,c_{n-S_k}).
\]

To define extended Hofbauer towers we need the following definition.
A forward iterate \( c_n \) is called a \emph{closest return} if 
\[
c_j \notin [c_n, \hat{c}_n] \quad \text{for} \quad 0 < j < n.
\]
The \emph{closest precritical points} is defined as follows:  
\[
z_0 := f^{-1}(c) \cap (0, c).
\]
Inductively,
\[
z_{k+1} := f^{-S_{k+1}}(c) \cap (z_k, c).
\]

The \emph{extended Hofbauer tower} is the disjoint union of copies of intervals $\widetilde{D}_n\subset [0,1]$ where $\widetilde{D}_1=(c,1)$  (for convenience we work with closures of $\widetilde D_{n}$ when needed) and
\[
\widetilde{D}_{n+1} = 
\begin{cases}
f(\widetilde{D}_n), & \text{ if } c \notin \widetilde{D}_n, \\
f(\widetilde{E}_n), & \text{ if } c \in \widetilde{D}_n.
\end{cases}
\]
where $\widetilde{E}_n$ is the component of $\widetilde{D}_n\setminus \{c\}$ containing $z_n$.
Note that it follows $D_n\subset \widetilde{D}_n$ for every $n\in\N$. Every cutting time for the original Hofbauer tower is also a cutting time for the extended tower, however there are also other cutting times. If $c\notin \widetilde{D}_n$, then $n$ is a \emph{co-cutting time} denoted by $\widetilde{S}_l$.

Let 
\[
s( n) =\max \{S_k| S_k<n\} 
\]
and
\[
\widetilde{s}( n) =\max \{\widetilde{S}_l| \widetilde{S}_l<n\}. 
\]

We use the following lemma \cite[Lemma 2]{Bruin94}:

\begin{lemma}
The following properties hold:
\begin{itemize}
\item A co-cutting time is never a cutting time.
\item $\widetilde{S}_0=\kappa$, where $\kappa>1$ is the smallest positive integer such that $c_{\kappa}>c$.
\item For $n\geq \widetilde{S}_0$, $\widetilde{D}_n=(c_{n-\widetilde{s}( n)},c_{n-s( n)})$. 
\item The difference between two subsequent co-cutting times is a cutting time. One can thus define a \emph{co-kneading map} $\widetilde{Q}:\N\to \N\cup \infty$ by
\[
S_{\widetilde{Q}(l)}=\widetilde{S}_l-\widetilde{S}_{l-1},
\]
using the same convention that $\widetilde{S}_l=\infty$ and $\widetilde{Q}(l)=\infty$ if $\widetilde{S}_l$ does not exist for some $l$.
\item Let $H_n\ni c_1$ be the maximal open interval such that $f^{n-1}|_{H_n}$ is monotone. Then 
\[f^{n-1}(H_n)=\widetilde{D}_n.\]
\item $\widetilde{D}_{S_k}=\left(c_{S_k-\widetilde{s}( S_k)}, c_{S_{Q(k)}}\right)$ and  $
\widetilde D_{\widetilde S_l}=\left(c_{\widetilde S_l-s(\widetilde S_l)},\,c_{S_{\widetilde Q(l)}}\right)$ for every $k,l\in \N$. 
\item Closest returns appear either at cutting or at co-cutting times.
\end{itemize}
\end{lemma}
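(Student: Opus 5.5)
The lemma is a list of basic facts about the extended Hofbauer tower. I would prove it by one simultaneous induction on $n$ that tracks the two endpoints of $\widetilde D_n$. The key point is that every interval $\widetilde D_n$ has endpoints among the points $c_j$ ($j\ge 1$). Moreover, $f^{n-1}$ maps the maximal monotone neighbourhood $H_n$ of $c_1$ onto $\widetilde D_n$.

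\textbf{Step 1: the fifth item.} I would prove this first, by induction.
\begin{itemize}
\item Base case: $H_1=(c,1)$ contains $c_1$, and $f^0(H_1)=\widetilde D_1$.
\item If $c\notin\widetilde D_n$: then $f$ is monotone on $\widetilde D_n$, so $f^{n}$ is monotone on $H_n$. Hence $H_{n+1}=H_n$ and $f^n(H_{n+1})=f(\widetilde D_n)=\widetilde D_{n+1}$.
\item If $c\in\widetilde D_n$: the maximal monotone domain of $f^n$ around $c_1$ shrinks to the $f^{n-1}$-preimage in $H_n$ of the component of $\widetilde D_n\setminus\{c\}$ whose closure contains $c_n$. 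Its image is $f(\widetilde E_n)$.
\end{itemize}
The third case needs one identification: that this component is exactly the one containing $z_n$, i.e. the one adjacent to $c_n$. I would get this from the defining property of the closest precritical points $z_k$, which are the nearest preimages of $c$ of the relevant order.

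\textbf{Step 2: the endpoint formulas (third and sixth items).} From Step 1, the endpoints of $\widetilde D_n$ are images of $\partial H_n$. These are points $c_j$: each endpoint of $H_n$ is either $c$ itself, mapping to $c_n$-type points, or a precritical point $z_k$ that was cut at an earlier time. At a cutting time $S_k$, the side containing $c_{S_k}$ is retained, exactly as for $D_n$. The new endpoint then evolves as $c_{n-s(n)}$, which recovers the standard formula $D_n=(c_n,c_{n-S_k})$. At a co-cutting time, the side $c_n$ is discarded and the $c$-endpoint is retained, giving the endpoint $c_{n-\widetilde s(n)}$.

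This yields $\widetilde D_n=(c_{n-\widetilde s(n)},c_{n-s(n)})$ once $n\ge\widetilde S_0$. Below that time, the far endpoint is still $0$ or $1$; this is the role of $\kappa$ in the second item. Specifically, $c_2<c<c_1$, and $c_j<c$ for $2\le j<\kappa$ while $c_\kappa>c$, so the first cut that discards the $c_n$ side happens at time $\kappa$, giving $\widetilde S_0=\kappa$.

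The sixth item then follows by substituting $n=S_k$ or $n=\widetilde S_l$ and using $S_k-S_{k-1}=S_{Q(k)}$.

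\textbf{Step 3: the remaining items.}
\begin{itemize}
\item \emph{First item.} It follows by comparing which side is kept at each cut: at a cutting time the side containing $c_n$ is kept, at a co-cutting time it is discarded. Equivalently, $D_n\subset\widetilde D_n$, and $c\in D_n$ characterizes cutting times.
\item \emph{Fourth item.} After a co-cutting time $\widetilde S_{l-1}$, the interval is $(c_{n-\widetilde S_{l-1}},\cdot)$ with endpoint evolving like the orbit of $c$. It next contains $c$ with the $c_n$-side discarded exactly when $n-\widetilde S_{l-1}$ is a time at which the image of the central branch reaches $c$. That is, $n-\widetilde S_{l-1}$ is a cutting time, which lets us define $\widetilde Q$.
\item \emph{Last item.} If $c_n$ is a closest return, then $[c_n,\hat c_n]$ contains no earlier $c_j$. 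By Step 2, one of the intervals $D_n$ or $\widetilde D_n$ has $c_n$ as an endpoint and its other endpoint is an earlier $c_j$ lying beyond $\hat c_n$. So $c\in\widetilde D_n$, and $n$ is a cutting or co-cutting time.
\end{itemize}

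\textbf{Main obstacle.} I expect the hardest part to be the bookkeeping in Step 2: identifying which side is discarded at each cut, and verifying that it matches $\widetilde E_n\ni z_n$. Since this is a cited result (\cite[Lemma 2]{Bruin94}), I would follow Bruin's induction for this part.
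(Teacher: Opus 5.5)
The paper gives no proof of this lemma; it only cites \cite[Lemma 2]{Bruin94}, so your sketch has to stand on its own. Your strategy is the right one: induct on $n$ via $\widetilde D_n=f^{n-1}(H_n)$ and track each endpoint as $c_{n-t}$, where $t$ is the last time that side was cut. However, the bookkeeping in Steps 2 and 3 goes wrong exactly where you locate the main difficulty.

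\textbf{The co-cutting mechanism is wrong.} You assert that at a co-cutting time the component of $\widetilde D_n\setminus\{c\}$ containing $c_n$ is discarded. This contradicts your own Step 1. Since $c_1\in H_{n+1}$, we have $c_{n+1}=f^n(c_1)\in f^n(H_{n+1})=\widetilde D_{n+1}$. Hence the retained component is \emph{always} the one containing $c_n$, at cutting and co-cutting times alike. If you really discarded the $c_n$-side at a co-cutting time, you would lose both $c_{n+1}\in\widetilde D_{n+1}$ and $D_{n+1}\subset\widetilde D_{n+1}$. Moreover, the endpoint that disappears would be $c_{n-s(n)}$ rather than $c_{n-\widetilde s(n)}$, so your mechanism produces the wrong formula in the third item.

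The correct dichotomy is not ``which side is kept'' but ``where $c$ sits''. The point $c_n$ splits $\widetilde D_n$ into $D_n=(c_n,c_{n-s(n)})$ and the complementary piece $(c_{n-\widetilde s(n)},c_n)$. Then $n$ is a cutting time if $c$ lies in the first piece, and a co-cutting time if $c$ lies in the second. In both cases only the part beyond $c$, away from $c_n$, is thrown away. So at time $n+1$ the endpoint $c_{n-s(n)}$ (cutting time), respectively $c_{n-\widetilde s(n)}$ (co-cutting time), is replaced by $c_1=c_{(n+1)-n}$, while the other endpoint is simply pushed forward by $f$. This gives the third item by induction. The first item then reduces to the fact that $c$ cannot lie in both disjoint pieces. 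For the second item, $\kappa$ is the first time $c$ falls into the complementary piece.

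\textbf{Two further corrections.}

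First, do not try to derive from properties of the $z_k$ that ``the component containing $z_n$'' is the one adjacent to $c_n$. Read literally, this is false. Every $z_k$ lies in $(0,c)$, whereas at $n=\kappa$ the retained component is $(c,c_1)\ni c_\kappa$. The recursion must be read with $c_n$ in place of $z_n$, which is exactly what the fifth item forces. Your base case $H_1=(c,1)$, rather than the literal maximal interval $(0,1)$, is the convention that makes the fifth item hold at $n=1$.

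Likewise, the endpoints of $H_n\subset(c,1)$ are not the $z_k$. Apart from the endpoint $1$ before time $\kappa$, they are points $y$ with $f^{t-1}(y)=c$, where $t<n$ is the last cut on that side. Then $f^{n-1}(y)=c_{n-t}$, which is precisely what produces $c_{n-s(n)}$ and $c_{n-\widetilde s(n)}$.

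Second, with the corrected picture your fourth item should be argued as follows. The piece between $c_{n-\widetilde S_{l-1}}$ and $c_n$ equals $f^{m}\bigl((c,c_{\widetilde S_{l-1}})\bigr)$ with $m=n-\widetilde S_{l-1}$. As long as $c$ has not entered it, $f^m$ is monotone on this one-sided neighbourhood of $c$, so it lies in $D_m$. Hence $c$ can first enter it only when $m$ is a cutting time. That first time is $m=\RR(\widetilde S_{l-1})$, not every cutting time as your ``exactly when'' suggests.

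Your closest-return argument then goes through. For $n\ge\kappa$, both endpoints of $\widetilde D_n$ are earlier orbit points on opposite sides of $c_n$. The one facing $c$ must lie beyond $\hat c_n$, which forces $c\in\widetilde D_n$. For $2\le n<\kappa$, every $n$ is a cutting time anyway.
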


\subsection{Recurrence}

Consider the unimodal map $f$ with infinite kneading sequence $\mathcal{K}(f)= \nu_1\nu_2\nu_3\cdots$.
The \(\omega\)-limit set of a point \( x \in [0,1] \) under \( f \) is defined by
\[
\omega(x,f) = \omega(x) = \{ y \in [0,1] \mid \exists\, n_1 < n_2 < \cdots \ \text{with} \ f^{n_i}(x) \to y \}.
\]

\begin{definition}
A point \( x \in [0,1] \) is called \emph{recurrent}, if for every open set \( U \ni x \), there exists an \( m \in \mathbb{N} \) such that \( f^{m}(x) \in U \); equivalently, \( x \) is recurrent if and only if \( x \in \omega(x) \). In terms of the kneading sequence, $c$ is recurrent if and only if every initial block $\nu_1\nu_2\cdots \nu_n$ of $\mathcal{K}(f)$ appears infinitely often in $\mathcal{K}(f)$. We shall say that point \(x\) \emph{infinitely recurrent} if $x$ is recurrent and non-periodic.
\end{definition}

\begin{definition}
    The point \( x \in [0,1] \) is \emph{uniformly recurrent} if for every open set \( U \ni x \), there exists an \( M \in \mathbb{N} \) such that whenever \( f^{j}(x) \in U \) for \( j \ge 0 \), then \( f^{j+k}(x) \in U \) for some \( 0 < k \le M \). The turning point $c$ is uniformly recurrent if and only if $\mathcal{K}(f)$ is a minimal sequence; that is, each initial block $\nu_1\nu_2\cdots\nu_n$ of $\mathcal{K}(f)$ appears in $\mathcal{K}(f)$ with bounded gap.
\end{definition}

\begin{definition}
We say \( x \in [0,1] \) is \emph{regularly recurrent} if for every open set \( U \ni x \), there exists an \( M \in \mathbb{N} \) such that \( f^{M \cdot j}(x) \in U \) for all \( j \in \mathbb{N} \). The turning point $c$ is regularly recurrent if and only if for each $n\in \N$ there exists $M\in \N$ such that $\nu_1\nu_2\cdots \nu_n = \nu_{M\cdot j+1}\nu_{M\cdot j+2}\cdots \nu_{M \cdot j+n}$ for all $j\in \N$; that is, $\mathcal{K}(f)$ is a \emph{Toeplitz sequence}.
\end{definition}

Every regularly recurrent point is uniformly recurrent, and every uniformly recurrent point is recurrent, but the converses do not hold in general.

\subsection{Homeomorphic restrictions for unimodal maps}

There is a long history of studying unimodal maps \(f\) for which \(f|_{\omega(c)}\) is a minimal homeomorphism of a Cantor set. One important case occurs when \(f|_{\omega(c)}\) is conjugate to an odometer; in this case we say that \(f\) has an embedded odometer.

\begin{comment}
\begin{definition}
    Let $\alpha = (j_1,j_2,\ldots)$ be a sequence of integers with each $j_i\geq 2$. Define $\Delta_\alpha$ to be the set of all sequences $(a_1,a_2,\ldots)$ such that $0\leq a_i\leq j_i-1$ for each $i$. Addition on $\Delta_\alpha$ is defined as follows: $$(x_1,x_2,\ldots) + (y_1,y_2,\ldots) = (z_1,z_2,\ldots)$$ where $z_1 = (x_1+y_1)\mod j_1$, and for each $i\geq 2$, $z_i = (x_i+y_i+r_{i-1})\mod j_i$ with $r_{i-1} = 0$ if $x_{i-1}+y_{i-1}+r_{i-2} < j_{i-1}$ and $r_{i-1} = 1$ otherwise (set $r_0 = 0$). Define $f_\alpha:\Delta_\alpha\to\Delta_\alpha$ by $$f_\alpha(x_1,x_2,\ldots) = (x_1, x_2, x_3,\ldots) + (1,0,0,\ldots).$$ The dynamical system $f_\alpha: \Delta_\alpha\to\Delta_\alpha$ is called the \emph{$\alpha$-adic odometer} or \emph{$\alpha$-adic adding machine}.
\end{definition}
\end{comment}

\begin{definition}
Let \(\alpha=(j_1,j_2,\ldots)\) be a sequence of integers with \(j_i\ge 2\) for all \(i\). Define
\[
\Delta_\alpha:=\prod_{i=1}^{\infty}\{0,1,\ldots,j_i-1\},
\]
the set of all sequences \(x=(x_1,x_2,\ldots)\) with \(0\le x_i\le j_i-1\).

Addition on \(\Delta_\alpha\) is defined by carrying. For \(x=(x_1,x_2,\ldots)\) and \(y=(y_1,y_2,\ldots)\), let
\(
x+y=(z_1,z_2,\ldots),
\)
where
\(
z_1\equiv x_1+y_1 \pmod{j_1},
\)
and for each \(i\ge2\),
\(
z_i\equiv x_i+y_i+r_{i-1}\pmod{j_i},
\)
with carry terms \(r_0:=0\) and
\[
r_{i-1}=
\begin{cases}
0, & \text{if } x_{i-1}+y_{i-1}+r_{i-2}<j_{i-1},\\
1, & \text{otherwise}.
\end{cases}
\]
Define \(f_\alpha:\Delta_\alpha\to\Delta_\alpha\) by
\[
f_\alpha(x)=x+(1,0,0,\ldots).
\]
The dynamical system \(f_\alpha:\Delta_\alpha\to\Delta_\alpha\) is called the \emph{\(\alpha\)-adic odometer}, or \emph{\(\alpha\)-adic adding machine}.
\end{definition}

It is well-known that if $f$ is infinitely renormalizable, then $f|_{\omega(c)}$ is conjugate to an odometer \cite[Proposition III.4.5]{deMelo_vanStrien}. Additionally, it is shown in \cite{BlockKeeslingMisiurewicz} that there exist non-renormalizable unimodal maps with embedded odometers, which are referred to as \emph{strange adding machines}. Moreover, these strange adding machines occur densely in the parameter space $[\sqrt{2},2]$ in the symmetric tent family. A construction to generate non-renormalizable unimodal maps with embedded odometers is defined in \cite{Alvin_strange_star} and serves as the basis for the construction provided in Section~\ref{Section: Absorbing Family}.

Not every unimodal map $f$ for which $f|_{\omega(c)}$ is a minimal homeomorphism has an embedded odometer. In \cite{Bruin_Homeomorphic_Restrictions}, a construction is provided to generate admissible kneading maps $Q(k)$ belonging to unimodal maps where $\omega(c)$ is a minimal Cantor set and $f|_{\omega(c)}$ is a homeomorphism. This construction produces many examples of unimodal maps where the kneading sequence $\mathcal{K}(f)$ is not a Toeplitz sequence, and thus $f|_{\omega(c)}$ is not conjugate to an odometer \cite{Alvin_ftcp}. At the time of publication of \cite{Bruin_Homeomorphic_Restrictions}, it was unknown whether there existed non-infinitely renormalizable unimodal maps $f$ with homeomorphic restrictions on their Cantor sets $\omega(c)$.

In \cite{Alvin_homeo}, a characterization of those unimodal maps $f$ for which $\omega(c)$ is a Cantor set and $f|_{\omega(c)}$ is a minimal homeomorphism is provided in terms of the kneading sequence. Given the minimal kneading sequence $\mathcal{K}(f)$, we define $X_f$ to be the one-sided shift space generated by $\mathcal{K}(f)$. 
\begin{theorem}\cite[Theorem 3.9]{Alvin_homeo}\label{Theorem:Alvin_homeo}
    Let \(f\) be a unimodal map. Then $f|_{\omega(c)}$ is a minimal homeomorphism on a Cantor set if and only if $\mathcal{K}(f)$ is an infinite aperiodic minimal sequence such that whenever $a0b,a'1b\in X_f$ where $|a|=|a'| < \infty$ and $b\in \{0,1\}^\N$, then $a=a'$ and $b=\mathcal{K}(f)$.
\end{theorem}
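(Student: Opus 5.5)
The plan is to pass between $\omega(c)$ and the shift space $X_f$ via itineraries, so that the statement becomes one about the combinatorics of $\mathcal{K}(f)$. Since the result is quoted from \cite{Alvin_homeo}, I would reconstruct its proof along those lines rather than build anything new.

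\textbf{Minimality.} First I would record the standard fact that $c$ is uniformly recurrent if and only if $\mathcal{K}(f)$ is a minimal sequence. This is exactly the characterization given in the definition of uniform recurrence above. Together with the Birkhoff characterization of minimality of orbit closures, it shows that $f|_{\omega(c)}$ is minimal exactly when $\mathcal{K}(f)$ is minimal. Next, $\omega(c)$ is a Cantor set rather than a finite set exactly when $c$ is not periodic, which means $\mathcal{K}(f)$ is infinite and aperiodic. Since $f$ is non-renormalizable without periodic attractors, the contraction principle excludes wandering intervals. Hence distinct points of $\omega(c)$ off the backward orbit of $c$ have distinct itineraries. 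This gives a factor map $\pi\colon X_f\to\omega(c)$ that semiconjugates $\sigma$ to $f$ and is injective except where the coordinate $C$ could occur. The only such point is $c\in\omega(c)$ and its preimages inside $\omega(c)$, whose two one-sided itineraries $0\mathcal{K}(f)$ and $1\mathcal{K}(f)$ both lie in $X_f$.

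\textbf{Injectivity.} The second step identifies injectivity of $f|_{\omega(c)}$. The only way $f$ fails to be injective is that $x$ and its symmetric point $\hat x$ both lie in $\omega(c)$ and are distinct. In symbolic terms, two sequences $0b,1b\in X_f$ have the same image $b$. If $b\neq\mathcal{K}(f)$, this produces two distinct points of $\omega(c)$ with a common image, so $f|_{\omega(c)}$ is not injective. If $b=\mathcal{K}(f)$, both sequences code the single point $c$, which is harmless. Allowing arbitrary equal-length prefixes $a,a'$ encodes the same obstruction for $f^{|a|+1}$. If $a0b$ and $a'1b$ both lie in $X_f$ with $a\neq a'$, or with $b\neq\mathcal{K}(f)$, then $\pi$ gives two distinct points of $\omega(c)$ with the same image under $f^{|a|+1}$. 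This contradicts injectivity of $f$. Conversely, if $f|_{\omega(c)}$ is a homeomorphism, every such pair must collapse to the coding ambiguity at $c$, which forces $a=a'$ and $b=\mathcal{K}(f)$. Surjectivity follows from minimality, and continuity of the inverse is automatic on a compact set.

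\textbf{Main obstacle.} The hard part is the precise correspondence in step one between $X_f$ and $\omega(c)$ near the critical orbit. One has to show that the condition on the words $a0b,\,a'1b$ captures all failures of injectivity, including those involving points whose forward orbits hit $c$. One also has to show that no spurious identifications arise from the two-sided coding at $c$. I would handle this by checking separately the points of $\omega(c)\cap\bigcup_n f^{-n}(c)$, where the itinerary contains $C$, using shift-maximality of $\mathcal{K}(f)$ and the fact that $c$ is then the unique point coded by both $0\mathcal{K}(f)$ and $1\mathcal{K}(f)$.
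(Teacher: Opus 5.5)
The paper does not prove this statement; it imports it from \cite{Alvin_homeo}, so your argument has to stand on its own. Your framework is the right one. If $f$ has no homtervals and $c$ is recurrent and non-periodic, the coding map $\pi\colon X_f\to\omega(c)$ is an at most two-to-one semiconjugacy. Its only identifications are pairs $a0\mathcal{K}(f)$, $a1\mathcal{K}(f)$ of points of $X_f$. From this, your proof that injectivity of $f|_{\omega(c)}$ forces the symbolic condition is correct.

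\textbf{The gap is the reverse implication.} Your sentence beginning ``Conversely'' re-proves the direction you already had. So your only argument that the condition forces injectivity is the translation of two distinct points $x\neq\hat x$ of $\omega(c)$ with a common image into sequences $0b,1b\in X_f$. That translation fails exactly when $f(x)$ is precritical, say $f^{m}(f(x))=c$ with $m\ge 0$. Then the codings of $x$ have the form $0a\epsilon\mathcal{K}(f)$ and those of $\hat x$ the form $1a\epsilon'\mathcal{K}(f)$, where $a$ is the first $m$ itinerary symbols of $f(x)$ and $\epsilon,\epsilon'\in\{0,1\}$. Nothing forces a common tail. You flag this as the main obstacle, but shift-maximality does not resolve it; the clause of the hypothesis allowing $a\neq a'$ does.
\begin{itemize}
\item If $\epsilon=\epsilon'$, you get $0b,1b\in X_f$ with $b=a\epsilon\mathcal{K}(f)\neq\mathcal{K}(f)$ (by aperiodicity). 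This violates the condition with empty prefixes.
\item If $\epsilon\neq\epsilon'$, the codings $0a\epsilon\mathcal{K}(f)$ and $1a\epsilon'\mathcal{K}(f)$ violate it with the distinct prefixes $0a$ and $1a$ (in one order or the other) and tail $\mathcal{K}(f)$.
\end{itemize}
This is exactly why the condition is stated with general prefixes. Without this step the converse is not established.

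\textbf{Further repairs.}
\begin{itemize}
\item In the forward direction you use $c\in\omega(c)$ without proof. The claim that $f|_{\omega(c)}$ is minimal exactly when $\mathcal{K}(f)$ is minimal is false for non-recurrent $c$ (e.g.\ when $\omega(c)$ is a repelling periodic orbit). Also, $\pi$ maps $X_f$ onto $\overline{\mathrm{orb}(c_1)}$, which equals $\omega(c)$ only when $c$ is recurrent. One way to close this: if $c\notin\omega(c)$, no point of $\omega(c)$ is precritical. The itinerary map then conjugates $f|_{\omega(c)}$ to an infinite one-sided subshift $Y$ with $\sigma(Y)=Y$. If $\sigma|_Y$ were a homeomorphism, continuity of $\sigma^{-1}$ would make the symbol preceding any occurrence of a length-$N$ word a function of that word. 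Then every word of $Y$ is determined by its last $N$ symbols, so $Y$ has bounded complexity and is finite, a contradiction.
\item Absence of wandering intervals does not follow from the contraction principle; it is one of that principle's hypotheses. You must assume no homtervals (automatic for tent maps with slope $>1$ and for $S$-unimodal maps) for $\pi$ to be well defined. Non-renormalizability plays no role.
\item In the converse direction, ``Cantor'' needs the fact that an infinite minimal set of an interval map contains no interval. Non-periodicity of $c$ alone does not give it.
\item It is not true in general that both $0\mathcal{K}(f)$ and $1\mathcal{K}(f)$ lie in $X_f$ (see the remark after Example~\ref{Example: reg rec and longbranched}). This is forced only a posteriori when $f|_{\omega(c)}$ is a homeomorphism, though your argument does not actually need it.
\end{itemize}
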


Equivalently, if \(f|_{\omega(c)}\) is a minimal homeomorphism of a Cantor set, then \(\mathcal K(f)\) is the unique point of \(X_f\) with two preimages, and any two \(n\)-th preimages of \(\mathcal K(f)\) differ in exactly one coordinate.

 \section{Characterizing Persistent Recurrence}

 The following definition can be found in \cite{GJ}.

 \begin{definition}
     We say $f$ is {\em critically monotonic} if either $c$ is not recurrent or there exists $k\in \N$ and a symmetric interval $U\ni c$ such that we can find a sequence of intervals $\{I_i\}$ and a sequence of increasing integers $\{s_i\}$ such that $c_k\in I_i$ for all $i\in\N$, $\cap I_i = \{c_k\}$, and $f^{s_i}: I_i\to U$ is one-to-one and onto.
 \end{definition}
 \begin{remark}
     Equivalently, $f$ is {\em not critically monotonic} if $c$ is recurrent and for every symmetric neighborhood $U\ni c$ there is an $N\in \N$ such that for all $n\geq N$, either $c_n\notin U$ or $\widetilde{D}_n \not\supset U$.
 \end{remark}

 \begin{definition}
     Let $\overline{x} = (\cdots, x_{-1},x_0)$ be a backward orbit under the unimodal map $f$.  Let $J \subset I$ be an interval. The sequence $(J_n)_{n\in \N_0}$ of intervals is called a {\em pull-back of $J$} along $\overline{x}$ if $J= J_0$, $x_{-k}\in J_k$ and $J_{k+1}$ is the largest interval such that $f(J_{k+1}) \subseteq J_k$ for all $k\in \N_0$. A pull-back is {\em monotone} if $c\notin J_n^\circ$ for every $n\in \N$. We can similarly define monotone pull-backs along finite backward orbits.
 \end{definition}

 The following definition by Blokh and Lyubich \cite{Blokh_Lyubich_Measurable_Dynamics_of_S_unimodal} is the interpretation of Yoccoz' condition $\tau(n)\to \infty$ from the complex setting to the real setting. The first known use of the term {\em persistent recurrence} seems to be in \cite{Lyubich_Milnors_attractors_persistent_recurrence}. 

 \begin{definition}
    The recurrent critical point $c$ is {\em reluctantly recurrent} if there exists a $\delta>0$ such that for every $l\in \N$ there is a backward orbit $\overline{x} = (x_{-l},\ldots, x_{-2},x_{-1},x)$ in $\omega(c)$ such that $B(x,\delta)$ has a monotonic pull-back along $\overline{x}$. Otherwise we say that $c$ is {\em persistently recurrent}.
 \end{definition}

In order to state Definition~\ref{Defintion: conditions C}, we need a few other facts.

\begin{definition}
Assume $f:[0,1]\to [0,1]$ is a unimodal map with the critical point $c$ and let $x\in [0,1]$.
If $\hat x=2c-x\in [0,1]$ we call $\hat x$ the {\em symmetric point of $x$}. 
A point $x$ is said to be {\em nice} if its forward orbit does not enter the interval $(x,\hat x)\subset [0,1]$. An interval $J\subset I$ is called {\em nice} if $J=(y,\hat y)$ for some nice point $y\in [0,1]$.
\end{definition}

%\red{It feels weird to have the absolute value and distance definition here; do we need to explicitly define them? Or is there a better place for it to be defined? \Jernej{I think it is a good place here}}
 %Let \( |\cdot| \) denote the absolute value. 
% The notation \( d(A,B) \) denotes the distance between sets or points \( A \) and \( B \).

\begin{definition}
Suppose that \( H_n(x) = (a,b) \ni x \) is the maximal interval on which \( f^n \) is diffeomorphic. Define
    \[
    r_n(x) := \min\{ |f^n(x) - f^n(a)|,\ |f^n(x) - f^n(b)| \}
    \]
    and
    \[
    R_n(x) := \max\{ |f^n(x) - f^n(a)|,\ |f^n(x) - f^n(b)| \}.
    \] 
\end{definition}

The following definition appears in \cite{Bruin_Absorbing_Cantor}.
\begin{definition}\label{Defintion: conditions C} Let $c$ be recurrent.
\begin{enumerate}
\item[(C1)] $f$ is Fibonacci-like, i.e.\ $k - Q(k)$ is bounded.
\item[(C2)] $Q(k) \to \infty$.
\item[(C3)] \(R_n(c_1)\to0\).
\item[(C4)] For every neighbourhood $U\ni c$, there exists $N\in \N$ such that for every $n>N$,
$\widetilde{D}_n$ does not cover either component of $U\setminus\{c\}$.
\item[(C5)] For every symmetric neighbourhood $U\ni c$, there exists $N\in \N$ such that for
every $n>N$, $U\not\subset \widetilde{D}_n$.
\item[(C6)] For every symmetric neighbourhood $U\ni c$, there exists $N\in\N$ such that for
every $n>N$,  $c_{n}\notin U$ or $\widetilde{D}_n\not\supset U$ (i.e. $f$ is not critically monotonic).
\item[(C7)] For every nice interval $U\ni c$,  there exists $N\in \N$ such that for every $n> N$,
$c_{n}\notin U$ or $\widetilde{D}_n\not\supset U$.
\item[(C8)] $r_n(c_1)\to 0$ ($\iff$ $c$ persistently recurrent).
\item[(C9)] $f|_{\omega(c)}$ is minimal and $\omega(c)$ Cantor set ($\iff$ $c$ uniformly recurrent \cite{Alvin_uniformly_recurrent}).
\end{enumerate}
\end{definition}

It is shown in Proposition~3.1 and Lemma~3.2 of \cite{Bruin_Absorbing_Cantor} that the following implications hold.
\begin{equation}
(C1) \Rightarrow (C2) \Leftrightarrow (C3) \Leftrightarrow (C4)\Rightarrow (C5)\Rightarrow (C6) \Rightarrow (C7) \Leftrightarrow (C8) \Rightarrow (C9)
\end{equation}

In \cite{Bruin_Absorbing_Cantor}, examples are provided to demonstrate that (C6)$\centernot\implies$ (C5) and (C7) $\centernot\implies$ (C6). Upon investigating these examples, it was determined that neither example yields an admissible kneading map nor a shift-maximal kneading sequence. We will explain why each example fails to correspond to a unimodal map and then provide modified examples to demonstrate that the claims made in \cite{Bruin_Absorbing_Cantor} indeed hold. We first comment on another result relating to persistent recurrence. 

In \cite{FPEP}, the following claim was made. Upon inspection, if this result were true, then critical monotonicity plus recurrence would imply reluctant recurrence (and thus persistent recurrence would imply that the turning point is not critically monotonic). Because we have examples of unimodal maps with persistently recurrent turning points that are critically monotonic (see Example~\ref{Example: Fixed C7 not C6}), this statement must be incorrect.

\begin{statement}\cite[Lemma 4.12]{FPEP}\label{Lemma: Sufficient for reluctant recurrence}
     Let $y\in \omega(c)$ and let $U$ be an interval such that $y\in U^\circ \subset U \subset [0,1]$. Assume for all $i$ the set $U$ can be pulled back monotonically along $(c_1,c_2,\ldots, c_{n_i})$ where $c_{n_i}\in U$ and $c_{n_i}\neq y$. Then $U$ can be monotonically pulled back along some infinite orbit $\overline{y} = (\ldots, y_{-2}, y_{-1},y)$ in $\omega(c)$. That is, $c$ is reluctantly recurrent.
 \end{statement}

 Observe that the above statement does not account for the possibility that the only accumulation points of the collection $\{c_{n_i}\}$ might lie in $\partial U$, which can happen, for example, if $U = (z_k,\hat{z}_k)$ as in Example~\ref{Example: Fixed C7 not C6}. In this case, we would not be able to find a neighborhood contained in $U$ with an infinite monotone pull-back along a folding point.

 We note that the previous statement was used in the proof of \cite[Theorem 4.13]{FPEP}, which states that $c$ is persistently recurrent if and only if the collection $\mathcal{E}$ of endpoints in the associated core inverse limit space is precisely the collection $\mathcal{F}$ of folding points.  We now provide a corrected proof of the theorem.

 First, we need some definitions.  We will work with 
the \emph{family of tent maps} $T_s(x) := \min\{ sx , s(1-x)\}$, $s \in (1,2]$, $x\in I$. 
In the next theorem we will use $T$ to denote the tent map.
We denote the \emph{critical point} by $c = \frac12$ and write $c_k := T^k(c)$.
With our choice of parameters, $c_2 < c < c_1$ and the interval $[c_2, c_1]$ 
is $T$-invariant.

 The \emph{inverse limit space}
$$
X :=\invlim{\{[0,1],T\}}=\{(\ldots, x_{-2}, x_{-1}, x_0): T(x_{-i})=x_{-(i-1)}, i\in\N\}
$$
is the collection of all backward orbits, equipped with the \emph{product metric}
$d(x,y) := \sum_{i \leq 0} 2^i |x_i-y_i|$. 
Denote by $\pi_{i}: X \to I$, $\pi_{i}(x) := x_{-i}$, 
the \emph{coordinate projections} for $i\in\N_0 := \N \cup \{ 0 \}$.
The \emph{shift homeomorphism} $\sigma:X \to X$ is defined by

\begin{equation}\label{def:shift}
\sigma(\ldots, x_{-2}, x_{-1}, x_0) := (\ldots, x_{-1}, x_0, T(x_0)).
\end{equation}

In the following theorem we will restrict our attention to the (indecomposable) inverse limit space $\invlim{\{[c_2,c_1], T|_{[c_2,c_1]}\}}$.
 {\em Basic arcs} are maximal closed connected sets $A \subset \invlim{\{[c_2,c_1], T|_{[c_2,c_1]}\}}$ on which $\pi_0:A \to I$
is injective, where $\pi_0(x) = x_0$ is the projection on the zero-th coordinate
of $x \in \invlim{\{[c_2,c_1], T|_{[c_2,c_1]}\}}$. The inverse limit space $\invlim{\{[c_2,c_1], T|_{[c_2,c_1]}\}}$ can then be represented as the union of its basic arcs, glued together
in an intricate way (see Lemma 2 in \cite{Br1}).

Although Lemma~4.12 of \cite{FPEP} is not valid as stated, the endpoint characterization in \cite[Theorem~4.13]{FPEP} remains true; we provide a corrected proof below. For $\epsilon>0$ and $x\in [0,1]$ let $B(x,\epsilon):=(x-\epsilon,x+\epsilon)\cap [0,1]$.

 \begin{theorem}\cite[Theorem 4.13]{FPEP}\label{Theorem: fixed FPEP endpoint characterization}
     For $\invlim{\{[c_2,c_1], T|_{[c_2,c_1]}\}}$ it holds that $\mathcal{F}= \mathcal{E}$ if and only if $c$ is persistently recurrent.
 \end{theorem}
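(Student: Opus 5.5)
The plan is to prove the two directions separately. Throughout I use the standard description of the two sets of points for tent inverse limits. A point $x=(\ldots,x_{-1},x_0)$ is a \emph{folding point} exactly when $x_{-n}\in\omega(c)$ for all $n\ge 0$ (Raines' characterization). A point $x$ is \emph{not an endpoint} exactly when it lies in the interior of some arc $A\subset X$. I also use the standard fact that $\mathcal E\subset\mathcal F$ always holds in the indecomposable core. So the theorem reduces to the following claim: $c$ is reluctantly recurrent if and only if some folding point lies in the interior of an arc. The argument avoids Statement~\ref{Lemma: Sufficient for reluctant recurrence}, which fails precisely because accumulation points of the $c_{n_i}$ may lie in $\partial U$.

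\textbf{Persistent recurrence $\Rightarrow$ $\mathcal F=\mathcal E$.} Suppose $x\in\mathcal F\setminus\mathcal E$, so $x$ is interior to an arc $A$.
\begin{enumerate}
\item First find $N$ and a subarc $A'\subset A$ with $x\in (A')^\circ$ such that $\pi_N|_{A'}$ is injective. This should follow because $A$ is a union of pieces of finitely many basic arcs near $x$: locally, $A$ is contained in the union of at most two basic arcs meeting at $x$, and a coordinate beyond the folding level separates them.
\item Since $\pi_N=T^{n-N}\circ\pi_n$ for $n\ge N$, injectivity of $\pi_N$ on $A'$ forces $\pi_n|_{A'}$ to be injective for all $n\ge N$.
\item The intervals $J_k:=\pi_{N+k}(A')$ then satisfy $T(J_{k+1})=J_k$, with $T^{k}|_{J_k}$ a homeomorphism onto $J_0$. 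Hence $c\notin J_k^\circ$ for every $k$.
\item As $\pi_N$ is a homeomorphism on $A'$ and $x\in (A')^\circ$, the point $x_{-N}$ lies in $J_0^\circ$. Choose $\delta>0$ with $B(x_{-N},\delta)\subset J_0$.
\item Then $B(x_{-N},\delta)$ has a monotone pull-back along $(\ldots,x_{-N-1},x_{-N})$, and all these points lie in $\omega(c)$ because $x$ is a folding point. Truncating this backward orbit at length $l$, for every $l$, shows that $c$ is reluctantly recurrent, a contradiction.
\end{enumerate}
The main obstacle is the first step: producing a subarc on which some projection is injective. I would handle it using the structure of $X$ as basic arcs glued together (Lemma 2 in \cite{Br1}). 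If no such subarc existed, every neighbourhood of $x$ in $A$ would contain points at which $A$ passes through infinitely many folds. That should force $x$ to be an endpoint of $A$, contradicting the choice of $x$.

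\textbf{Reluctant recurrence $\Rightarrow$ $\mathcal F\neq\mathcal E$.} Take the constant $\delta$ from the definition of reluctant recurrence.
\begin{enumerate}
\item For each $l$, fix a backward orbit $(x^l_{-l},\ldots,x^l_0)$ in $\omega(c)$ such that $B(x^l_0,\delta)$ pulls back monotonically, with pull-back intervals $J^l_k$ for $0\le k\le l$.
\item Because $T$ has constant slope $s$ and $T^k|_{J^l_k}$ is a homeomorphism onto $B(x^l_0,\delta)$ (intersected with the core), each $J^l_k$ has length $|B(x^l_0,\delta)|\,s^{-k}$. It contains a neighbourhood of $x^l_{-k}$ of radius bounded below independently of $l$. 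Some care is needed with truncation at $c_2,c_1$; I would control this by shrinking $\delta$ to $\delta/2$.
\item By compactness and a diagonal argument, pass to a subsequence along which $x^l_{-k}\to x_{-k}$ for every $k$. The $J^l_k$ then converge in the Hausdorff metric to nondegenerate intervals $J_k\ni x_{-k}$ with $T(J_{k+1})=J_k$.
\item Monotonicity passes to the limit: $c\notin J_k^\circ$, since otherwise $c$ would lie in the interior of $J^l_k$ for large $l$. Moreover $x_0\in J_0^\circ$, and $x_{-k}\in\omega(c)$ for all $k$ because $\omega(c)$ is closed.
\item The inverse limit of $(J_k,T|_{J_k})$ is an arc in $X$, since all bonding maps are homeomorphisms. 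It contains $x=(\ldots,x_{-1},x_0)$ in its interior.
\end{enumerate}
Thus $x$ is a folding point that is not an endpoint, so $\mathcal F\neq\mathcal E$, which completes the proof.
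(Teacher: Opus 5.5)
Your proposal follows essentially the same route as the paper. For reluctant recurrence, both arguments take a diagonal limit of the length-$l$ monotone pull-backs to get an infinite monotone pull-back along a backward orbit in $\omega(c)$, whose inverse limit is an arc with the folding point in its interior. For the converse, the paper likewise replaces $x$ by $\sigma^{-j}(x)$ so that it lies in the interior of a basic arc, and then derives a monotone pull-back that contradicts persistent recurrence. Your Step~1 is exactly this reduction, and the paper also asserts it without further justification, so your heuristic there is no weaker than what the paper offers.

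The one detail the paper makes explicit, and you leave implicit in Step~5, concerns the \emph{maximal} pull-backs of $B(x_{-N},\delta)$: they must coincide with $(T^k|_{J_k})^{-1}(B(x_{-N},\delta))\subset J_k^\circ$. This holds by induction, because $c_1$ can only occur as an endpoint of $J_k$. The paper secures the same point by choosing the endpoints of $A$ off $\mathrm{Orb}(c)$.
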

 \begin{proof} 
 %\Jernej{I am in favor of either extending the second sentence in the following paragraph or using the teal version below this paragraph.}
% Suppose that $c$ is reluctantly recurrent. Then there exists $\delta > 0$ such that for infinitely many $n_i$, $\widetilde{D}_{n_i} \supset (c_{n_i}-\delta, c_{n_i}+\delta)$.
 %Let $y_0$ be an accumulation point of $\{c_{n_i}\}$. Without loss of generality, we can assume $d(y_0, c_{n_i}) < \delta/2$ for all $n_i$ by passing to an appropriate subsequence.  Hence $y_0\in B(c_{n_i},\delta) \subset \widetilde{D}_{n_i}$ for all $i$.  Therefore, there exists $\overline{y}=(\ldots, y_{-2},y_{-1},y_0)\in \mathcal{F}$ such that $I_{n_i-1}(\overline{y})\cdots I_{1}(\overline{y}) = e_1\cdots e_{n_i -1}$ for infinitely many $i$. Observe $B(y_0,\delta/2) \subset \widetilde{D}_{n_i}$ for all $i$. Let $J = B(y_0,\delta/2)$. Then there is an infinite monotone pullback of $J$ along $\overline{y}$. Note that $\invlim\{J_n,T|_{J_n}\}$ is an arc in $\invlim{\{[c_2,c_1], T|_{[c_2,c_1]}\}}$ that contains $\overline{y}$ in its interior, and thus $\overline{y}$ is not an endpoint.
 Suppose that \(c\) is reluctantly recurrent. Then there exists \(\delta>0\) such that for every \(l\in\mathbb N\) there is a backward orbit
\(
(x^{(l)}_{-l},x^{(l)}_{-(l-1)},\ldots,x^{(l)}_{-1},x^{(l)}_0)\subset \omega(c)
\)
for which \(B(x^{(l)}_0,\delta)\) admits a monotone pull-back of length \(l\). Since \(\omega(c)\) is compact, by passing to a subsequence and using a diagonal argument, we may assume that for each fixed \(m\ge0\),
\(
x^{(l)}_{-m}\to x_{-m} \text{ as }  l\to\infty.
\)
By continuity, this defines an infinite backward orbit
\[
\overline{x}=(\ldots,x_{-2},x_{-1},x_0)\in \invlim{\{[c_2,c_1], T|_{[c_2,c_1]}\}}
\]
such that \(x_{-m}\in\omega(c)\) for every \(m\ge0\).

Fix \(m\ge0\). For all sufficiently large \(l\), we have
\(
B(x_0,\delta/2)\subset B(x^{(l)}_0,\delta).
\)
Hence \(B(x_0,\delta/2)\) admits a monotone pull-back of length \(m\) along
\(
(x^{(l)}_{-m},\ldots,x^{(l)}_0).
\)
By compactness of the corresponding pull-back intervals, and after possibly shrinking the interval to \(B(x_0,\delta/3)\), passing to the limit gives a monotone pull-back of length \(m\) along
\((x_{-m},\ldots,x_0).\)
Since \(m\) was arbitrary, \(B(x_0,\delta/3)\) admits an infinite monotone pull-back \((J_n)_{n\in\mathbb N_0}\)
along \(\overline{x}\). Therefore \(\invlim\{J_n,T|_{J_n}\}\)
is an arc in \(\invlim\{[c_2,c_1],\,T|_{[c_2,c_1]}\}\) containing \(\overline{x}\) in its interior. Since \(x_{-m}\in\omega(c)\) for every \(m\ge0\), we have \(\overline{x}\in\mathcal F\). Thus \(\overline{x}\notin\mathcal E\), and hence \(\mathcal F\neq\mathcal E\).

 For the other direction, the proof is analogous to the second paragraph of the proof of \cite[Theorem 4.13]{FPEP}. Nevertheless, we give it here for the sake of completeness.\\
 Let $c$ be persistently recurrent and assume that $x=(\ldots, x_{-1}, x_0)\in \invlim{\{[c_2,c_1], T|_{[c_2,c_1]}\}}$ such that $x\in \mathcal{F}\setminus\mathcal{E}$. Replacing \(x\) by \(\sigma^{-j}(x)\) for some sufficiently large \(j\in\mathbb N\), if necessary, we may assume that \(x\) lies in the interior of a basic arc. Let $A$ be a subset of the basic arc of $x$ such that $\partial A\cap \mathrm{Orb}(c)=\emptyset$ 
and such that $x\in A^\circ$. Let $A_k:=\pi_k(A) \subseteq[c_2, c_1]$ 
for every $k\in\N_{0}$. Denote by $J:=A_0$ and by $(J_n)_{n\in\N_0}$ the pullback of $J$ along $x$. 
Note that $A_n\subset J_n$ for every $n\in\N_0$. Since $c$ is persistently recurrent, 
there exists the smallest $N\in\N$ such that $c\in J^\circ_{N}$. Thus $A_0=J_0,A_1=J_1,\ldots, A_{N-1}=J_{N-1}$ but $A_N\subsetneq J_N$. 
Since $c\not\in A_n^\circ$ for 
every $n\in\N$ (because otherwise $\partial A\cap \mathrm{Orb}(c)\neq\emptyset$), it follows that $c$ is an endpoint of $A_{N}$, since $T(c)=c_1\in \partial([c_2,c_1])$. But then $c_N$ is an endpoint of $A_0=A$, 
which is a contradiction.
 \end{proof}

We are now ready to investigate the claim regarding (C6)$\centernot\implies$ (C5).

\begin{example}\cite[Example 11.3]{Bruin_Absorbing_Cantor}\label{Example: Bad C6 not C5}
    Let $Q(0) = Q(1) = 0$ and for $k>1$, define $$Q(k) = \begin{cases}
        0 & \text{ if } k\equiv 0 \mod 4 \text{ or } k\equiv 3 \mod 4,\\
        k-3 & \text{ if } k \equiv 1 \mod 4, \\
        k-1 & \text{ if } k \equiv 2 \mod 4.
    \end{cases}$$

    One can always take an admissible kneading map and convert it into a shift-maximal kneading sequence. In this case, if one were to create the associated symbolic sequence, one would obtain $$e = 1011001010 \cdots$$ Observe that $\sigma^3(e) \succeq e$ in the parity lexicographical ordering, and thus $e$ is not shift-maximal. Hence this inadmissible kneading map does not correspond to a unimodal map. 
\end{example}

It is possible to modify the previous example so that the kneading map is admissible. Further, the methods provided in \cite{Bruin_Absorbing_Cantor} to justify that (C6) and (C5) are distinct properties will hold. 

\begin{example}[Modified Example~\ref{Example: Bad C6 not C5}]
    Let $Q(0) = Q(1)=Q(2) = Q(4) = 0$, $Q(3) = Q(6) = 1$, $Q(5) = 2$, and for $k>6$ define $$Q(k) = \begin{cases}
        0 & \text{ if } k\equiv 0 \mod 4 \text{ or } k\equiv 3 \mod 4,\\
        k-3 & \text{ if } k \equiv 1 \mod 4, \\
        k-1 & \text{ if } k \equiv 2 \mod 4.
    \end{cases}$$
\end{example}

The following example was presented in \cite{Bruin_Absorbing_Cantor} to demonstrate (C7)$\centernot\implies$(C6). We demonstrate that the resulting kneading map fails to be admissible.
\begin{example}\cite[Example 11.1]{Bruin_Absorbing_Cantor}
    Let $Q(0) = Q(1) = Q(2) = 0$, $k_0=1$, and $k_1 = 3$. Then we define inductively,
    \begin{align*}
        Q(k_i) & = k_{i-1},\\
        Q(k_i+1) & = 0, \\
        Q(k_i+2) &= k_{i-1}+2,\\
        Q(k_i+3) &=k_i+2,\\
        Q(k_i+4) &=k_i,\\
        Q(k_i+5) & = 1,\\
        Q(k_i+5+j) &= Q(j) \text{ for } j=1,2,\ldots, k_{i-1}+3, \\
        k_{i+1} &= k_i + k_{i-1}+9
    \end{align*}

    Recall that admissibility in terms of the kneading map is guaranteed by $$\{Q(k+j)\}_{j\geq 1} \succeq \{ Q(Q^2(k) + j)\}_{j\geq 1},$$ where $\succeq$ is the lexicographical ordering.
    We show that the admissibility condition is not met for this example. Observe that $Q(Q^2(8)+j) = Q(j)$ for all $j$. It is easy to check that $Q(8+j) = Q(j)$ for $j=1, \ldots, 5$. However, when $j=6$ we have $Q(8+6) = 0$ and $Q(Q^2(8)+6) = 5$. This implies that $\{Q(Q^2(8) + j)\}_{j\geq 1} \succeq \{Q(8+j)\}_{j\geq 1}$. Hence there is no unimodal map with the kneading map described in \cite[Example 11.1]{Bruin_Absorbing_Cantor}. 
\end{example}

We now provide an example of a unimodal map for which there exists a symmetric interval $U$ such that for infinitely many $n\in \N$, $c_n\in U$ and $\widetilde{D}_n\supset U$, however (C7) is still satisfied. This example first appeared in \cite{Alvin_Brucks_AMKME} and provides an example of a unimodal map with embedded odometer for which $Q(k)\not\to\infty$, however the collection of folding points for the core inverse limit space is precisely the collection of endpoints (and thus $c$ is persistently recurrent).

\begin{example}\cite[Example 3.10]{Alvin_Brucks_AMKME}\label{Example: Fixed C7 not C6}
    We define the kneading map of a unimodal map with embedded odometer given by $\alpha = (25,5,5,5,\ldots)$. 
    Let 
    
    \[Q(k) = \begin{cases}
	0 & \text{if}\ k=1,2 ,\\
	1 & \text{if}\ k = 3+2\cdot i_2 + \sum_{n\geq 3} n\cdot i_n, \ \text{where}\ i_2 \in \{0,1,2\},\\ & i_n \in \{0,1,2,3\} \text{ for } n\geq 3, i_3 \neq 0 \text{ gives } i_2 = 2, \text{and} \\ & i_n \neq 0 \text{ for } n\geq 4 \text{ gives } i_{n-j}=3 \text{ for } 1\leq j\leq n-3,\\
	2 & \text{if}\ k = 4+2\cdot i_2 + \sum_{n\geq 3} n\cdot i_n, \ \text{where}\ i_2 \in \{0,1\},\\ & i_n \in \{0,1,2,3\} \text{ for } n\geq 3, i_3\neq 0 \text{ gives } i_2 = 1, \text{and} \\& i_n \neq 0 \text{ for } n\geq 4 \text{ gives } i_{n-j}=3 \text{ for } 1\leq j\leq n-3,\\
	3=4-1 & \text{if}\ k=8=4+2\cdot 2,\\
	8 & \text{if}\ k = 11+3\cdot i_3 + \sum_{n\geq 4} n\cdot i_n, \ \text{where}\ i_3 \in \{0,1\},\\ & i_n \in \{0,1,2,3\} \text{ for } n\geq 4, i_4\neq 0 \text{ gives } i_3 = 1, \text{and} \\ & i_n\neq 0 \text{ for } n\geq 5 \text{ gives } i_{n-j}=3 \text{ for } 1\leq j \leq n-4,\\
	10 = 11-1 & \text{if}\ k = 17 = 11+3\cdot 2,\\
	17 & \text{if}\ k = 21+4\cdot i_4 + \sum_{n\geq 5} n\cdot i_n, \ \text{where}\ i_4 \in \{0,1\},\\ & i_n \in \{0,1,2,3\} \text{ for } n\geq 5, i_5\neq 0 \text{ gives } i_4 = 1, \text{and} \\ & i_n\neq 0 \text{ for } n\geq 6 \text{ gives } i_{n-j}=3 \text{ for } 1\leq j \leq n-5,\\
	20=21-1 & \text{if}\ k=29 = 21+4\cdot 2,\\
	29 & \text{if}\ k = 34+5\cdot i_5 + \sum_{n\geq 6} n\cdot i_n, \ \text{where}\ i_5 \in \{0,1\},\\ & i_n \in \{0,1,2,3\} \text{ for } n\geq 6, i_6\neq 0 \text{ gives } i_5 = 1, \text{and} \\ & i_n\neq 0 \text{ for } n\geq 7 \text{ gives } i_{n-j}=3 \text{ for } 1 \leq j\leq n-6,\\
	33=34-1 & \text{if}\ k = 44 = 34+5\cdot 2,\\
	44 & \text{if}\ k = 50+6\cdot i_6 + \sum_{n\geq 7} n\cdot i_n, \ \text{where}\ i_6 \in \{0,1\},\\ & i_n \in \{0,1,2,3\} \text{ for } n\geq 7, i_7\neq 0 \text{ gives } i_6 = 1, \text{and} \\ & i_n\neq 0 \text{ for } n\geq 8 \text{ gives } i_{n-j}=3 \text{ for } 1\leq j\leq n-7,\\
	etc. & 
\end{cases}\]  

The following table is provided in \cite{Alvin_Brucks_AMKME} to aid in better understanding how $Q(k)$ is defined, so we include it here.

\begin{table}[h!] 
\begin{tabular}{l | c c c c c c c c c c c c c c c c c c c c c c c}
$k$ & 0 & 1 & 2 & 3 & 4 & 5 & 6 & 7 & 8 & 9 & 10 & 11 & 12 & 13 & 14 & 15 & 16 \\ \hline
$Q(k)$ & \ & 0 & 0 & 1 & 2 & 1 & 2 & 1 & 3 & 2 & 1 & 8 & 2 & 1 & 8 & 2 & 1\\ 
$S_k$ & 1 & 2 & 3 & 5 & 8 & 10 & 13 & 15 & 20 & 23 & 25 & 45 & 48 & 50 & 70 & 73 & 75  
\end{tabular}

\vspace{.1in}

\begin{tabular}{l | c c c c c c c c c c c c c c c c}
$k$ &  17 & 18 & 19 & 20 & 21 & 22 & 23 & 24 & 25 & 26 & 27 & 28 & 29 \\ \hline
$Q(k)$ & 10 & 8 & 2 & 1 & 17 & 8 & 2 & 1 & 17 & 8  & 2 & 1 & 20\\ 
$S_k$ & 100 & 120 & 123 & 125 & 225 & 245 & 248 & 250 & 350 & 370 & 373 & 375 & 500
\end{tabular}

\vspace{.1in}

\begin{tabular}{l | c c c c c c c c c c c }
$k$  & 30 & 31 & 32 & 33 & 34 & 35 & 36 & 37 & 38 & 39 & $\cdots$ \\ \hline
$Q(k)$ & 17 & 8 & 2 & 1 & 29 & 17 & 8 & 2 & 1 & 29  & $\cdots$ \\ 
$S_k$  & 600 & 620 & 623 & 625 & 1125 & 1225 & 1245 & 1248 & 1250  & 1750  & $\cdots$
\end{tabular}
\end{table}

Note that for all $n\geq 2$ we have that $4\cdot 5^n-2$ is a co-cutting return time and $4\cdot 5^n$ is a cutting return time. Note that $3\cdot 5^n$ is the last cutting time prior to $4\cdot 5^n$ while $4\cdot 5^{n}-5$ is the last co-cutting time prior to $4\cdot 5^n-2$. Hence $\widetilde{D}_{4\cdot 5^n-2} = [c_3, c_{3\cdot 5^n-2}]$ for all $n\geq 2$. Let $U = (z_1,\hat{z}_1)$. Observe that $U\subset \widetilde{D}_{4\cdot 5^n-2}$ for all $n\geq 2$ with $c_3 < z_1< c < c_{4\cdot 5^n-2} < \hat{z}_1 < c_{3\cdot 5^n-2}$. In other words, (C6) fails to hold.
We note that it was first shown in \cite{Alvin_Brucks_AMKME} that the set of endpoints for the core inverse limit associated with this map is precisely the set of folding points, and thus $c$ is persistently recurrent. That is, (C7) holds. Further note that $U = (z_1,\hat{z}_1)$ is the only symmetric neighborhood such that for infinitely many return times $n$, $c_n\in U$ and $\widetilde{D}_n\supset U$.

\end{example}

In the following proposition we will use the fact that, by construction of the extended Hofbauer towers, pull-back branches are cut precisely at closest precritical points \(z_k\) and their symmetric points \(\hat z_k\).

\begin{proposition}
    Suppose $c$ is persistently recurrent and there exists a symmetric neighborhood $U$ such that for infinitely many return times $n$, $c_n\in U$ and $\widetilde{D}_n\supset U$. Then $U = (z_k,\hat{z}_k)$ for some $k\in \N$.
\end{proposition}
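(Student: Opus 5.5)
We argue by contradiction: assume $U=(u,\hat u)$ is a symmetric neighbourhood of $c$ that is not of the form $(z_k,\hat z_k)$. From the infinitely many returns with $c_n\in U\subset\widetilde D_n$ we will build a slightly smaller interval around a point of $\omega(c)$ with arbitrarily long monotone pull-backs inside $\omega(c)$. That makes $c$ reluctantly recurrent, contradicting persistent recurrence.

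\textbf{Step 1 (pull back into monotone branches).} Fix one of the return times $n_i$. By the lemma of \cite{Bruin94}, $\widetilde D_{n_i}=f^{n_i-1}(H_{n_i})$, where $H_{n_i}\ni c_1$ is the maximal interval on which $f^{n_i-1}$ is monotone. So $U$ pulls back monotonically along $(c_1,\ldots,c_{n_i})$ to an interval $V_i\subset H_{n_i}$.

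\textbf{Step 2 (the failure mode).} As noted after Statement~\ref{Lemma: Sufficient for reluctant recurrence}, a compactness argument can fail only when the accumulation points of $c_{n_i}$ lie in $\partial U$, or when the monotone branches shrink onto $\partial U$. Recall that branches of the extended Hofbauer tower are cut exactly at the closest precritical points $z_k$ and their symmetric points $\hat z_k$. The endpoints of every $\widetilde D_n$ are of the form $c_j$, and a pull-back of $U$ stops being monotone exactly when an intermediate image contains $c$ in its interior. Hence, if $\widetilde D_{n_i}\supset U$ with the endpoints of $\widetilde D_{n_i}$ converging to $\partial U$, those endpoints must be images of boundary points of branches, i.e.\ precritical points. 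So $u=\lim z_{k_i}$ or $u=z_k$. Since $z_k\to c$ monotonically and $U$ is a fixed neighbourhood, the sequence $z_{k_i}$ must stabilise, forcing $u=z_k$.

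\textbf{Step 3 (the generic case).} If instead $\partial U$ is not of that form, there is a definite gap. Choose $\eta>0$ so that $U'=(u-\eta,\hat u+\eta)$ still lies in infinitely many $\widetilde D_{n_i}$. Alternatively shrink to $U''=(u+\eta,\hat u-\eta)$ while keeping infinitely many $c_{n_i}\in U''$, because $c_{n_i}$ accumulates at some $y\in \omega(c)\cap \overline U$. Then take $y\in\omega(c)$ as a limit of a subsequence of $c_{n_i}$ with $B(y,\delta)\subset\widetilde D_{n_i}$ for a fixed $\delta>0$. Run exactly the diagonal argument from the proof of Theorem~\ref{Theorem: fixed FPEP endpoint characterization} along the backward orbits $(c_1,\ldots,c_{n_i})\subset\omega(c)$, using recurrence of $c$. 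This yields an infinite monotone pull-back of $B(y,\delta/3)$ along a backward orbit in $\omega(c)$, so $c$ is reluctantly recurrent, a contradiction.

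\textbf{Main obstacle.} The delicate point is Step 2/3: showing that $y$ cannot be forced onto $\partial U$ with the margin $\delta$ shrinking to zero unless $\partial U$ is a precritical cut point. I would first try to argue that the endpoints of $\widetilde D_{n_i}$ are $c_{n_i-s(n_i)}$ and $c_{n_i-\widetilde s(n_i)}$, so a vanishing margin means $c_j\to u$ from outside along these specific iterates. Combined with $c_{n_i}\to u$ from inside, this squeezes $u$ between orbit points on both sides. I would then show such a $u$ can be a boundary of every relevant branch only if it is one of the $z_k$.
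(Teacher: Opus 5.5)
There is a genuine gap, and it sits in your Step~2. Step~3 is not where the difficulty lies, and as written it does not hold up. Nothing lets you enlarge $U$ to $U'$ inside infinitely many $\widetilde D_{n_i}$. Nor does an accumulation point $y\in\overline U$ of the $c_{n_i}$ keep infinitely many of them in $U''$ when $y\in\partial U$.

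In fact this case never occurs under persistent recurrence. By (C8), $r_{n_i-1}(c_1)\to0$. Take $c_{n_i}\in(u,c)$; since $U\subset\widetilde D_{n_i}$, the endpoint of $\widetilde D_{n_i}$ beyond $c$ stays at distance at least $|\hat u-c|$ from $c_{n_i}$. So $c_{n_i}$ and the other endpoint $e_i\le u$ converge together to $u$. Reluctant recurrence needs only finite pull-backs, so no diagonal argument is needed either. This is exactly the paper's first step, and it leaves all the content in your Step~2.

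There your key assertion is false. The endpoints of $\widetilde D_{n_i}$ are $c_{n_i-s(n_i)}$ and $c_{n_i-\widetilde s(n_i)}$. These are images of branch endpoints, hence postcritical points, not precritical ones. Nothing you write forces their limit to be a $z_k$, as your ``Main obstacle'' paragraph concedes.

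The missing idea is the one hypothesis you never use: the $n_i$ are \emph{return times}. Let $z_{m-1}<c_{n_i}<z_m$.
\begin{enumerate}
\item Up to the next cutting or co-cutting time, $\widetilde D_{n_i+j}=f^j((e_i,c))$. The point $c_{n_i+j}$ splits it into $f^j((c_{n_i},c))$ and the short piece $f^j((e_i,c_{n_i}))$.
\item The first piece covers $c$ at $j=S_m=\RR(n_i)$; this is the next time of the \emph{same} type as $n_i$. The short piece produces the next time of the opposite type.
\item Suppose $e_i>z_{m-1}$. Then $(e_i,c_{n_i})\subset(z_{m-1},z_m)$. Now $f^j$ is monotone on $(z_{m-1},c)$ for $j\le S_m$, with $c\notin f^j((z_{m-1},c))$ for $j<S_m$, and $z_m$ is the only $f^{S_m}$-preimage of $c$ there. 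So the short piece does not cover $c$ for $j\le S_m$.
\item Then $n_i$ is not the last time of its type before one of the other type, i.e.\ it is not a return time. Hence $e_i<z_{m-1}<c_{n_i}$; the case $c_{n_i}<z_0$ is excluded the same way.
\end{enumerate}
So the short piece is cut at a closest precritical point. This is what the paper's appeal to branches being cut exactly at $z_k,\hat z_k$ amounts to.

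Now squeeze. Both $u$ and $z_{m_i-1}$ lie in $[e_i,c_{n_i}]$, whose length tends to $0$. Since $z_{m_i-1}<c_{n_i}\to u<c$, the points $z_{m_i-1}$ take only finitely many values. Hence $u=z_{m_i-1}$ for large $i$, and $U=(z_k,\hat z_k)$.
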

\begin{proof}
Let \(U=(u,\hat u)\), where \(u<c<\hat u\). Let \(\{n_i\}\) be a sequence of
return times such that
\(c_{n_i}\in U\) and \(D_{n_i}\supset U\)
for every \(i\). Since \(c\) is persistently recurrent, condition \((C8)\) holds, and hence \(r_n(c_1)\to0 \).
It follows that, for all sufficiently large \(i\), the branch of the extended
Hofbauer tower corresponding to \(\widetilde D_{n_i}\) is cut at the boundary of
\(U\). By the observation before this proposition, this boundary must be a closest precritical
pair. Hence there exists \(k\in\mathbb N\) such that
\(\partial U=\{z_k,\hat z_k\}\).
Therefore \(U=(z_k,\hat z_k)\).
\end{proof}

% \begin{proof}
%     Suppose $c$ is persistently recurrent and let $U$ be a symmetric neighborhood such that for a sequence $\{n_i\}$ of return times, $c_{n_i}\in U$ and $\widetilde{D}_{n_i}\supset U$ for all $i$. Without loss of generality, suppose that each $n_i$ is a co-cutting return time. Note that for each $n_i$, there exists a $k_i$ such that either $c_{n_i-s(n_i)} < z_{k_i} < c_{n_i}$ or $c_{n_i} < \hat{z}_{k_i} < c_{n_i-s(n_i)}$; if no such $k_i$ existed, then $n_i$ would not be a co-cutting return time. Let $k$ be the smallest integer such that $z_k\in \overline{U}$, but $z_{k-1}\notin \overline{U}$. Fix $0 < \epsilon < \frac{1}{2}|z_{k+1}-z_k|$. As $c$ is persistently recurrent, there exists some $I$ such that for all $i\geq I$, $r_{n_i-1}(c_1) < \epsilon$. Observe that $k_i = k$ for all $i\geq I$ as otherwise we would have $c_{n_i}\notin U$ or $\widetilde{D}_{n_i}\centernot\supset U$. As $\epsilon$ can be taken arbitrarily small, it follows that both $c_{n_i}$ and $c_{n_i-s(n_i)}$ get arbitrarily close to the boundary of $U$, and thus $U = (z_k,\hat{z}_k)$.
% \end{proof}

%
%
%
%
%
%
%
%
%
\subsection{Symbolically characterizing persistent recurrence}\label{Section: Symbolic Characterization}

The following definitions are taken from \cite{Sands}. In what follows, we assume that $c$ is recurrent and non-periodic.
We define 
\[R(x) := \min\{j\geq 1: c\in f^j[c;x] \},\]  where $[y;z]:=[\min\{y,z\},\max\{y,z\}]\subset [0,1]$ and let
$\mathcal{R}(n) = R(c_n)$. 
For $i\geq 1$ let 
\[
e_i:=
\begin{cases}
0, &  f^i(c)\in[0, c),\\
%*, &  f^i(c)=c,\\
1, &  f^i(c)\in(c, 1].
\end{cases}
\]
Since \(c\) is non-periodic, \(c_i\neq c\) for all \(i\ge1\).
It follows that for each $n\geq 1$ we have that \[\mathcal{R}(n) = \min\{j\geq 1: e_{n+j}\neq e_j\},\] where 
\[\mathcal{K}(f) = e_1e_2e_3\cdots\] 
is the {\em kneading sequence} of the unimodal map $f$. Because $\mathcal{K}(f)$ is shift-maximal, we always have $\RR(n) = S_k$ for some $k\geq 0$. We define $\RR^-(n) = S_{k-1}$ if $k > 1$.

The sequences of cutting times, co-cutting times, and return times can be defined in terms of the function $\RR$ as follows. 
\begin{itemize}
    \item The sequence of cutting times is given by $S_1 = 1$ and $S_{i+1} = S_i + \RR(S_i)$ for all $i\geq 1$.
    \item The sequence of co-cutting times is given by $T_1 = \min\{j>1\mid e_j = 1\}$ and $T_{i+1} = T_i + \RR(T_i)$ for all $i\geq 1$.
    \item The first return time is $M_1 = T_1 -1$ and is a cutting time. The subsequent return times are given by $M_{i+1} = M_i + \RR^{-}(M_i)$ for all $i\geq 1$. Observe that the return times alternate between cutting times and co-cutting times. Further, each return time is either the last cutting time before a co-cutting time or the last co-cutting time before a cutting time.
\end{itemize}

We note that for any $n>1$, if $\RR(n) > \RR(i)$ for all $1\leq i < n$, then $n$ is a \emph{closest return time}. Every closest return time is necessarily a return time.  The following lemma relates the functions $\RR$ and $R$ with the distance a point in $[0,1]$ lies from the turning point $c$.

\begin{lemma}\label{Lemma: close to c}
    For each $K\in \N$, we may define $\epsilon = |z_K-c|$ such that $c_n\in B(c,\epsilon)$ if and only if $\RR(n) > S_K$.  More generally, for each $\epsilon > 0$, there exists an $M\in \N$ such that whenever $R(x)>M$, then $|x - c| < \epsilon$ and if $|x - c| < \epsilon$, then $R(x)\geq M$.
\end{lemma}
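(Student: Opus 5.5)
The plan is to reduce the lemma to the standard structure of closest precritical points. The key fact is that the points $z_K$ satisfy $z_0<z_1<\cdots<c$, with $z_K\to c$ by non-recurrence of precritical points. Moreover, by construction $z_K$ is the closest preimage of $c$ to the left of $c$ of order $S_K$. That is, $f^{S_K}(z_K)=c$, $f^j(c)$ is not in $(z_K,\hat z_K)$-type positions generating earlier preimages, and $f^j$ has no preimage of $c$ in $(z_{K},c)$ for $j\le S_K$ (in fact none for $j<S_{K+1}$).

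For the first claim, I use the definition $R(x)=\min\{j\ge1: c\in f^j[c;x]\}$. By symmetry of $R$ under $x\mapsto\hat x$, I may assume $x<c$. Then $R(x)>j$ iff $f^i|_{[x,c]}$ does not cover $c$ for all $i\le j$. This is equivalent to $[x,c]$ containing no point of $f^{-i}(c)$ for $1\le i\le j$, apart from possibly $x$ itself.

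The interval $(z_K,c)$ contains no preimage of $c$ of order $\le S_{K+1}-1$, and $z_{K+1}$ has order $S_{K+1}$. Hence for $x\in(z_K,c)$ one gets $R(x)\ge S_{K+1}>S_K$. For $x\le z_K$, the point $z_K\in[x,c]$ gives $c\in f^{S_K}[x;c]$, so $R(x)\le S_K$.

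Applying this with $x=c_n$ and $R(c_n)=\RR(n)$ yields $c_n\in B(c,\epsilon)$ with $\epsilon=|z_K-c|$ iff $\RR(n)>S_K$. This uses that $c_n$ is never a precritical point, so the boundary cases are harmless.

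For the general statement, given $\epsilon>0$ I pick $K$ with $|z_K-c|<\epsilon$, and set $M=S_K$. Then $R(x)>M$ forces $|x-c|<|z_K-c|<\epsilon$ by the argument above. Conversely, I choose $K'$ maximal with $|z_{K'}-c|\ge\epsilon$; such a $K'$ exists once $\epsilon<|z_0-c|$, and larger $\epsilon$ are trivial. Then $|x-c|<\epsilon$ puts $x\in(z_{K'},\hat z_{K'})$, so $R(x)\ge S_{K'+1}$.

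The two directions give different thresholds, so I would phrase the general claim with $M$ chosen appropriately for each direction, or take $\epsilon=|z_K-c|$ exactly so that both coincide. The main obstacle is rigorously justifying that the preimages of $c$ of order less than $S_{K+1}$ avoid $(z_K,c)$. I would argue this by induction on $K$ via the central branch of $f^{S_{K+1}}$: its boundary near $c$ is exactly $z_K$ because cutting times are precisely the times at which the central branch image contains $c$.
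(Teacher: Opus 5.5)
The paper states this lemma without proof, so there is no route to compare with. Judged on its own, your argument for the first claim is correct: the central branch of $f^n$ is $(z_K,c)$ for $S_K<n\le S_{K+1}$, which gives $R\ge S_{K+1}$ on $(z_K,c)$ and $R\le S_K$ for $x\le z_K$. Your planned induction for the preimage-avoidance is the right one.

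The gap is in the second claim. The lemma asks for a \emph{single} $M$ that serves both implications, for every $\epsilon>0$. You conclude that the two directions ``give different thresholds'' and propose either to rephrase the statement or to restrict to $\epsilon=|z_K-c|$. Neither proves the lemma as stated. In fact your own estimates already give one $M$. Let $K$ be the \emph{least} index with $|z_K-c|<\epsilon$.

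If $K=0$, then $M=S_0=1$ works: $R(x)>1$ gives $|x-c|<|z_0-c|<\epsilon$, and $R\ge 1$ always holds.

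If $K\ge 1$, then $|z_{K-1}-c|\ge\epsilon$. So $|x-c|<\epsilon$ puts $x$ strictly between $z_{K-1}$ and $\hat z_{K-1}$, where you showed $R(x)\ge S_K$. Conversely, $R(x)>S_K$ gives $|x-c|<|z_K-c|<\epsilon$.

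Thus $M=S_K$ works in both directions. Your two thresholds $S_K$ and $S_{K'+1}$ coincide exactly when $K=K'+1$.

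Three smaller corrections follow.

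First, the existence of $K$ with $|z_K-c|<\epsilon$ needs $z_K\to c$, and ``non-recurrence of precritical points'' is not the reason. Suppose $z_K\uparrow z_\infty<c$. Then $f^n$ would be monotone on $(z_\infty,c)$ for every $n$, and no $x\in(z_\infty,c)$ would ever satisfy $c\in f^j[c;x]$. The first implication would then fail for $\epsilon<c-z_\infty$. What rules this out is the absence of homtervals, which the paper records as part of the contraction principle (no wandering intervals, no periodic attractors).

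Second, your reduction to $x<c$ uses $R(\hat x)=R(x)$, i.e.\ $f(\hat x)=f(x)$. With the paper's definition $\hat x=2c-x$, this holds only if $f$ is symmetric about $c$. For non-symmetric $f$, the set $\{R>S_K\}$ is the interval between $z_K$ and the point $z_K'>c$ with $f(z_K')=f(z_K)$, not $B(c,|z_K-c|)$. This is an implicit hypothesis of the lemma itself, but you should make it explicit.

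Third, drop ``apart from possibly $x$ itself''. If $f^i(x)=c$, then $c\in f^i[c;x]$ and so $R(x)\le i$, which is exactly what you use for $x=z_K$.
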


Recall the extended Hofbauer tower. For each $n\in \N$, we have that $\widetilde{D}_n = [c_{n-\widetilde{s}(n)};c_{n-s(n)}]$ is the image of the maximal interval of monotonicity containing $c_1$ under $f^{n-1}$.  Here $s(n)$ denotes the last cutting time prior to $n$ and $\widetilde{s}(n)$ denotes the last co-cutting time prior to $n$. If \(n\) is a cutting or co-cutting time, then \(c\in \widetilde D_n\).

\begin{lemma}\cite[Lemma 11.1]{Bruin_Absorbing_Cantor}\label{Lemma: K consecutive terms large}
    Suppose that $R_{n-1}(c_1) \geq \epsilon$. Then there exists $K = K(\epsilon)$ such that $\{n, n+1, \ldots, n+K\}$ contains a cutting or a co-cutting time. If $r_{n-1}(c_1) \geq \epsilon$, then $\{n, n+1,\ldots, n+K\}$ contains a cutting and a co-cutting time. In particular, this holds if $n$ is a cutting or a co-cutting time.
\end{lemma}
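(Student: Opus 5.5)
The plan is to work throughout with the extended Hofbauer tower, using the description $\widetilde D_n=[c_{n-\widetilde s(n)};c_{n-s(n)}]$ as the image of the maximal monotone interval $H_n\ni c_1$ under $f^{n-1}$. The key observation is that $\widetilde D_{m+1}$ is obtained from $\widetilde D_m$ either by applying $f$ (when $c\notin\widetilde D_m$) or by cutting at $c$ and then applying $f$. In particular, $c$ must lie in every $\widetilde D_m$ where $m$ is a cutting or co-cutting time. Throughout, $R_{n-1}(c_1)$ and $r_{n-1}(c_1)$ denote the larger and smaller distances from $c_n$ to the two endpoints of $\widetilde D_n$.

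\textbf{First claim (one cut).} Suppose $R_{n-1}(c_1)\ge\epsilon$. Then $\widetilde D_n$ contains an interval $J$ of length at least $\epsilon$ with $c_n$ as an endpoint. If no $m\in\{n,\dots,n+K\}$ is a cutting or co-cutting time, then $c\notin\widetilde D_m$ for those $m$. Hence $f^{K}$ maps $\widetilde D_n$, and so $J$, homeomorphically onto $\widetilde D_{n+K}$. By the contraction principle (we have no wandering intervals and no periodic attractors), there is a $\delta=\delta(\epsilon)>0$ with $|f^j(J)|>\delta$ for all $j$. I would then argue that iterates of an interval of length at least $\delta$ cannot avoid $c$ for more than $K(\delta)$ consecutive steps. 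To see this, cover $[0,1]$ by finitely many intervals of length $\delta/2$. Each contains a precritical point, because preimages of $c$ are dense: the map is non-renormalizable with no wandering intervals or attractors. Let $K$ exceed the maximal order of these chosen precritical points. Then $f^j(J)$ contains one of these intervals, so some further iterate $f^{j+i}(J)$ with $i\le K$ contains $c$ in its interior. That contradicts monotonicity, so $\{n,\dots,n+K\}$ must contain a cutting or co-cutting time.

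\textbf{Second claim (both cuts).} Now suppose $r_{n-1}(c_1)\ge\epsilon$, so $c_n$ has at least $\epsilon$ of room on both sides inside $\widetilde D_n$. Let $m$ be the first cutting or co-cutting time given by the first claim. At time $m$ the tower cuts at $c$ and keeps one of the two components, and the retained side is dictated by whether $m$ is a cutting or a co-cutting time. Pull back to time $n$. The discarded component contains the $f^{m-n}$-image of a full one-sided $\epsilon$-neighbourhood of $c_n$, while the retained component contains the image of the other side. Together with the point $c_m$, the retained component contains an interval of length at least $\delta(\epsilon)$ adjacent to $c_m$. So $R_{m}(c_1)$ is still bounded below, and the first claim applied again yields a second cut within $K$ further steps.

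I expect the main obstacle to be showing that this second cut is of the \emph{other} type. The idea is that the discarded side also has length at least $\delta$ and remains in the orbit of the other branch. More precisely, I would compare with the central branch $D_m\subset\widetilde D_m$. Lemma~\ref{Lemma: K consecutive terms large} suggests using the structure $\widetilde D_{S_k}=(c_{S_k-\widetilde s(S_k)},c_{S_{Q(k)}})$ and $\widetilde D_{\widetilde S_l}=(c_{\widetilde S_l-s(\widetilde S_l)},c_{S_{\widetilde Q(l)}})$. At a cutting time the surviving extended interval has $c_{m-\widetilde s(m)}$ as its far endpoint, at distance at least $\delta$ from $c_m$, and a symmetric statement holds at co-cutting times. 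Applying the first-claim argument to the side of $c_m$ bounded by the endpoint inherited from the opposite sequence forces that endpoint to be cut next, and such a cut happens exactly at a time of the opposite type. So within $2K$ steps both types occur, and after replacing $K$ by $2K$ the second claim holds. The final sentence of the statement follows because at a cutting or co-cutting time $n$, the point $c_n$ is an endpoint-generated value, and $R_{n-1}(c_1)\ge|c_{n}-c|$ is bounded below along returns; for the stated version I would simply apply the first claim with the appropriate $\epsilon$.
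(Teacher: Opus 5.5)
The paper quotes this lemma from Bruin without proof, so your argument has to stand on its own.

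Your first claim is correct. It does not even need the contraction principle: $J\subset\widetilde D_n$ already has length $\ge\epsilon$, so it contains one of your finitely many precritical points of order $\le K(\epsilon)$. Take $j=0$.

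The gap is in the second claim. The fact that makes it work is never stated or proved. Write
\[
\widetilde D_n=[c_{n-\widetilde s(n)};c_n]\cup[c_n;c_{n-s(n)}],
\]
where the second piece is $\overline{D_n}$. Then $n$ is a cutting time exactly when $c$ lies inside the $D_n$-piece, and a co-cutting time exactly when $c$ lies inside the other piece. Since $\widetilde D_{n+1}$ is the $f$-image of the component of $\widetilde D_n\setminus\{c\}$ containing $c_n$, a cut at time $n$ only truncates the piece containing $c$. The other piece is carried forward by $f$ unchanged. Hence each piece is iterated monotonically, independently of the other, until $c$ enters it.

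Without this fact, reapplying your first claim only ever yields ``some cut'', possibly always of the same type, and your patch does not supply it. Two steps fail:
\begin{itemize}
\item The component discarded at a cutting time $m$ is the piece between $c$ and $c_{m-s(m)}$. It is not the image of a full one-sided $\epsilon$-neighbourhood of $c_n$: the part of that image between $c_m$ and $c$ is kept.
\item The assertion that the opposite endpoint is ``cut next'' is false in general. Many cutting times can occur between two consecutive co-cutting times, for instance whenever $Q$ is bounded but $\widetilde Q$ is not.
\end{itemize}
What your last step actually requires is that the side of length $>\delta$ chosen at time $m$ survives all intervening cutting times intact. That is precisely the missing fact.

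Once that fact is in place, the second claim is immediate. It needs neither the contraction principle nor a second application of the first claim. If $r_{n-1}(c_1)\ge\epsilon$, both pieces of $\widetilde D_n$ have length $\ge\epsilon$. Each contains in its interior one of your fixed precritical points of order $\le K(\epsilon)$. The first time $c$ enters the $D$-piece is a cutting time in $\{n,\dots,n+K\}$, and the first time $c$ enters the other piece is a co-cutting time in $\{n,\dots,n+K\}$.

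Finally, your justification of the last sentence is not valid. $|c_n-c|$ is not bounded below along cutting and co-cutting times: closest returns occur at such times, and $c_n\to c$ along them because $c$ is recurrent. What is true is that $c\in\widetilde D_n$ at such $n$, so $R_{n-1}(c_1)\ge|c_n-c|$. The first claim therefore applies at those $n$ whenever $|c_n-c|\ge\epsilon$, and the sentence can only be read with a hypothesis of this kind.
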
 We emphasize that it is possible for $r_{n-1}(c_1) < \epsilon$ and $\{n,n+1,\ldots, n+K\}$ to contain both a cutting and a co-cutting time.

We use the following notation throughout this section. If $M_i$ denotes a return time, then $N_i$ denotes the first cut or co-cut after that return time. We note that $N_i$ will be a co-cutting time whenever $M_i$ is a cutting time, and vice versa; that is, $N_i$ and $M_i$ are of opposite type. We let $L_i = s(M_i)$ when $M_i$ is a cutting time, and $L_i = \tilde{s}(M_i)$ when $M_i$ is a co-cutting time; that is, $L_i$ is the last cutting time or co-cutting time prior to $M_i$ that is of the same type as $M_i$. The following two lemmas are useful to keep track of different values for $\RR$ and the connections to the kneading map when considering return times $M_i$ and the associated cutting and co-cutting times $L_i$ and $N_i$; these results follow from \cite{Sands}.

\begin{lemma}\label{Lemma: helpful kneading facts}
    Let $\{M_i\}$ be the sequence of return times. Then there is a sequence $\{t_i\}$ such that $\RR(M_i) = S_{t_i}$ for all $i$. The following hold.
    \begin{enumerate}
        % \item \sout{For every \sout{(co-)cutting} \Jernej{cutting or co-cutting} time $n_j$ such that $M_i < n_1 = N_i < n_2 < \cdots < n_k=M_{i+1}$, it follows that $n_j = M_i + S_{t_i - (k+1-j)}$.}
        \item Let $M_i<n_1=N_i<n_2<\cdots<n_k=M_{i+1}$
        be the cutting and co-cutting times between $M_i$ and $M_{i+1}$. Then, for each $1\le j\le k$, $n_j=M_i+S_{\,t_i-(k+1-j)}$.
        \item If $M_{i+1} = N_i$, then $M_{i+1} - L_{i+1} = N_{i} - M_{i-1} = S_{t_{i-1}}$. 
        \item If $M_{i+1} > N_i$, then $M_{i+1} - L_{i+1} = S_{t_i-1}- S_{t_i-2}=S_{Q(t_i-1)}$.
        \item $N_{i+1} - M_{i+1} = (N_{i+1} - M_{i}) - (M_{i+1} - M_i) = S_{t_i} - S_{t_i-1} = S_{Q(t_i)}$.
    \end{enumerate}
\end{lemma}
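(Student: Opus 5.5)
Write $\RR(n)$ for the first index at which $\sigma^n(\mathcal K(f))$ disagrees with $\mathcal K(f)$. All four items of Lemma~\ref{Lemma: helpful kneading facts} then follow from the recursive descriptions of cutting times, co-cutting times and return times in terms of $\RR$. I will also use the standard kneading-theory identity (Sands): if $n$ is a cutting or co-cutting time with $\RR(n)=S_t$, then for any cutting or co-cutting time $m$ with $n<m<n+S_t$ we have
\[
\RR(m)=\RR(m-n),
\]
since the block $e_{n+1}\cdots e_{n+S_t-1}$ coincides with $e_1\cdots e_{S_t-1}$. The plan is to fix a return time $M_i$ with $\RR(M_i)=S_{t_i}$ (that such $t_i$ exists is the remark that $\RR$ always takes values among the cutting times), and then follow the cutting and co-cutting times forward from $M_i$.

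\textbf{Item (1).} The times following $M_i$ are governed by the cutting times $S_j$ with $j<t_i$, shifted by $M_i$. Between $M_i$ and $M_i+S_{t_i}$ the kneading sequence copies its initial block. So the cutting or co-cutting times $m$ in this window correspond exactly to the indices $m-M_i$ at which the central branch reaches $c$ inside the tower over $c_{M_i}$, that is, to the cutting times $S_j$ with $j<t_i$. The times of opposite type to $M_i$ come from the last few cutting times $S_{t_i-k},\dots,S_{t_i-1}$. The definition $M_{i+1}=M_i+\RR^-(M_i)=M_i+S_{t_i-1}$ pins the last of them, and counting back gives $n_j=M_i+S_{t_i-(k+1-j)}$. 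The step I expect to be the main obstacle is making precise why the intermediate times are consecutive cutting times. For this I would use that $M_i$ is the last time of its type before a time of the other type, and that consecutive differences are cutting times (Lemma~2 of \cite{Bruin94}).

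\textbf{Items (2) and (3).} Both compute $L_{i+1}$, the last time before $M_{i+1}$ of the same type as $M_{i+1}$.
\begin{itemize}
\item If $M_{i+1}=N_i$, then $M_{i+1}$ is the first time of its type after $M_i$. Hence $L_{i+1}$ is the last same-type time before $M_i$, which is the return time $M_{i-1}$, since return types alternate. Applying item (1) at index $i-1$ gives $N_i-M_{i-1}=\RR(M_{i-1})=S_{t_{i-1}}$, because $N_i$ is the first time of opposite type after $M_{i-1}$'s window closes.
\item If $M_{i+1}>N_i$, then by item (1) the two last same-type times are $M_i+S_{t_i-2}$ and $M_i+S_{t_i-1}$. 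Their difference is $S_{t_i-1}-S_{t_i-2}=S_{Q(t_i-1)}$ by the definition of $Q$.
\end{itemize}

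\textbf{Item (4).} $N_{i+1}$ is the first opposite-type time after $M_{i+1}$, and $M_i+S_{t_i}$ is the time at which the copying window from $M_i$ ends. That endpoint is of opposite type to $M_{i+1}$ and is the first such time after $M_{i+1}$, so $N_{i+1}=M_i+S_{t_i}$. Subtracting $M_{i+1}=M_i+S_{t_i-1}$ gives
\[
N_{i+1}-M_{i+1}=S_{t_i}-S_{t_i-1}=S_{Q(t_i)}.
\]
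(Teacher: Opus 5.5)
Your items (2)--(4) are fine once (1) is known: they only use $M_{i+1}=M_i+S_{t_i-1}$ and the recursion that the next time of the same type as $n$ is $n+\RR(n)$. The gap is in item (1), which is the real content of the lemma, and your sketch of it rests on false claims.

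\begin{itemize}
\item No time in the window $(M_i,M_i+S_{t_i})$ has the type of $M_i$, because the next such time is $M_i+\RR(M_i)$. So there are no ``earlier'' same-type times for the opposite-type ones to be distinguished from.
\item The cutting and co-cutting times in the window do not correspond to all $S_j$ with $j<t_i$. The window begins with a stretch $(M_i,N_i)$ containing none of them. For the Fibonacci sequence $\mathcal K(f)=1001110110010100\cdots$ one finds $M_i=11$, $t_i=2$, $N_i=M_{i+1}=13$, while $11+S_0=12$ is neither a cutting nor a co-cutting time.
\item Your auxiliary identity $\RR(m)=\RR(m-n)$ is false as stated. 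The block agreement only gives it when $m-n+\RR(m-n)<S_t$, and it always fails at $m=M_{i+1}$. For example, $n=8$, $m=11$ gives $\RR(11)=3$ but $\RR(3)=2$. If it held at $m=M_{i+1}$, then $M_i+S_{t_i}$ would have the type of $M_{i+1}$, contradicting alternation.
\item Knowing that differences of consecutive co-cutting times are cutting times does not tell you \emph{which} cutting times occur. So that fact alone cannot produce consecutive indices.
\end{itemize}

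Two steps are needed to prove (1).

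First, show that $N_i-M_i$ is a cutting time $S_r$ with $r<t_i$. Your argument for (4), shifted to index $i-1$, does not depend on (1). It gives $N_i=M_{i-1}+S_{t_{i-1}}$, hence $N_i-M_i=S_{t_{i-1}}-S_{t_{i-1}-1}=S_{Q(t_{i-1})}$; for $i=1$ simply $N_1-M_1=1=S_0$. Moreover $r<t_i$ because $N_i$ precedes $M_i+S_{t_i}$.

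Second, suppose $M_i+S_q$ is a time of the type opposite to $M_i$ with $q+1<t_i$. The agreement $e_{M_i+1}\cdots e_{M_i+S_{t_i}-1}=e_1\cdots e_{S_{t_i}-1}$ together with $S_{q+1}<S_{t_i}$ yields $\RR(M_i+S_q)=\RR(S_q)=S_{q+1}-S_q$. So the next time of that type is $M_i+S_{q+1}$. For $q=t_i-1$ the agreement runs past $M_i+S_{t_i}$, so no further opposite-type time occurs before $M_i+S_{t_i}$.

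Together these give $n_j=M_i+S_{r+j-1}$ with $k=t_i-r$, which is (1).
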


\begin{lemma}\label{Lemma: helpful cutting facts}
    Let $M_i$ be a co-cutting return time. Then for all cutting times $M_i < n \leq M_{i+1}$, the following hold (symmetric results hold when $M_i$ is a cutting time and each $n$ is a co-cutting time):
    \begin{enumerate}
        \item If $n < M_{i+1}$, then $\RR(n-\tilde{s}(n))=\RR(n-M_i) = \RR(n)$.
        \item If $n=M_{i+1}$, then $\RR(M_{i+1}-\tilde{s}(M_{i+1}))=\RR(M_{i+1}-M_i) = N_{i+1}-M_{i+1}$.
        \item If $n = N_i$, then $\RR(N_i - {s}(N_i))=\RR(N_i - M_{i-1}) = \RR(S_{t_{i-1}}) = S_{Q(t_{i-1}+1)}$.
        \item If $N_i < n \leq M_{i+1}$, then $\RR(n - s(n)) \leq n-s(n) = \RR(s(n))$.
    \end{enumerate}
\end{lemma}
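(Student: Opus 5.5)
This lemma collects bookkeeping facts about the function $\RR$ around return times. I will derive each item from two ingredients. The first is the identity $\RR(n)=\min\{j\ge1: e_{n+j}\neq e_j\}$. The second is the description of $\widetilde D_n=[c_{n-\widetilde s(n)};c_{n-s(n)}]$ as the image of the maximal monotonicity interval $H_n\ni c_1$ under $f^{n-1}$. Throughout, $M_i$ is a co-cutting return time (so $L_i=\tilde s(M_i)$ and $N_i$ is a cutting time); the other case follows by swapping the roles of the two endpoints of $\widetilde D_n$.

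The idea is to compare itineraries. The points $c_{n-\tilde s(n)}$ and $c_n$ are the two endpoints of $\widetilde D_n$ reached along one monotone branch. For $m\ge n$, as long as no cutting or co-cutting time of the \emph{same type} as the endpoint $c_{n-\tilde s(n)}$ occurs, this endpoint and $c_m$ remain on the same side of $c$. Hence $e_{n-\tilde s(n)+j}$ agrees with $e_{n+j}$ up to the next time $c$ enters $\widetilde D_{n+j}$ from that side.

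\emph{Item 1.} Take a cutting time $n$ with $M_i<n<M_{i+1}$. By Lemma~\ref{Lemma: helpful kneading facts}(1), no co-cutting time lies strictly between $M_i$ and $M_{i+1}$ apart from the alternation structure, so $\tilde s(n)=M_i$. The branch from $c_{n-M_i}$ then follows the cutting-time branch. Also $n-M_i=S_{t_i-(k+1-j)}$ is itself a cutting time. I will check that $\RR(n-M_i)=n'-n$, where $n'$ is the next cutting time, and that this equals $\RR(n)$. Both equalities come from $S_{j+1}-S_j=S_{Q(j+1)}$ and the recursion $S_{i+1}=S_i+\RR(S_i)$.

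\emph{Item 2.} For $n=M_{i+1}$, the same identification gives $\RR(M_{i+1}-M_i)=\RR(S_{t_i})$. By item~4 of Lemma~\ref{Lemma: helpful kneading facts}, this is $S_{Q(t_i+1)}$, which I expect to match $N_{i+1}-M_{i+1}$.

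\emph{Item 3.} For $n=N_i$, I use $s(N_i)=M_{i-1}$, the last cutting return time, together with Lemma~\ref{Lemma: helpful kneading facts}(2). This gives $N_i-M_{i-1}=S_{t_{i-1}}$, and then $\RR(S_{t_{i-1}})=S_{t_{i-1}+1}-S_{t_{i-1}}=S_{Q(t_{i-1}+1)}$.

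\emph{Item 4.} For $N_i<n\le M_{i+1}$, the interval $\widetilde D_{s(n)}$ contains $c$, and $s(n)$ has the next cutting time exactly $n$. So $\RR(s(n))=n-s(n)$ directly from the cutting-time recursion. The inequality $\RR(n-s(n))\le n-s(n)$ then follows from shift-maximality: the block $e_{n-s(n)+1}\cdots$ cannot agree with $e_1\cdots$ for longer than the return length without contradicting $\sigma^{n-s(n)}(\mathcal K)\preceq\mathcal K$ at the cutting time $n$.

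The main obstacle is the careful identification of $s(n)$ and $\tilde s(n)$ within the interval $(M_i,M_{i+1}]$, in particular showing that $\tilde s(n)=M_i$ throughout. I expect this to follow from Lemma~\ref{Lemma: helpful kneading facts}(1), which says the times $n_j$ between consecutive return times are all of one type. Apart from that step, these are standard facts from \cite{Sands}, and the plan mostly cites and verifies them.
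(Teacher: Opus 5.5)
\textbf{Item 2 fails as written.} You claim $\RR(M_{i+1}-M_i)=\RR(S_{t_i})$. But the definition of return times gives $M_{i+1}=M_i+\RR^-(M_i)=M_i+S_{t_i-1}$; this is also the case $j=k$ of Lemma~\ref{Lemma: helpful kneading facts}(1). So $M_{i+1}-M_i=S_{t_i-1}$, not $S_{t_i}$. Your resulting value $\RR(S_{t_i})=S_{Q(t_i+1)}$ is not what item 4 of Lemma~\ref{Lemma: helpful kneading facts} gives; that item gives $N_{i+1}-M_{i+1}=S_{Q(t_i)}$. The match you expect is false in general.

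Take the Fibonacci kneading sequence $1001110110010\cdots$, with cutting times $1,2,3,5,8,13,\dots$ and return times $3,4,5,7,8,11,13,\dots$, and the co-cutting return time $M_i=11$. Then $\RR(11)=3$, $M_{i+1}=13$ and $N_{i+1}=14$. So $N_{i+1}-M_{i+1}=1=\RR(2)$, whereas $\RR(3)=2$.

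The correct computation has two parts:
\begin{itemize}
\item $\RR(M_{i+1}-M_i)=\RR(S_{t_i-1})=S_{t_i}-S_{t_i-1}$, by the cutting-time recursion;
\item $N_{i+1}=M_i+\RR(M_i)=M_i+S_{t_i}$, by the co-cutting recursion.
\end{itemize}
Hence $N_{i+1}-M_{i+1}=S_{t_i}-S_{t_i-1}$ as well.

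\textbf{Two further steps need repair; both are fixable.}

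First, the types of the times $n_j$. Lemma~\ref{Lemma: helpful kneading facts}(1) lists these times but says nothing about their type, so it cannot give $\tilde s(n)=M_i$. Use the co-cutting recursion instead. The first co-cutting time after $M_i$ is $M_i+\RR(M_i)=M_i+S_{t_i}>M_{i+1}$. Hence every cutting or co-cutting time in $(M_i,M_{i+1}]$ is a cutting time, and $\tilde s(n)=M_i$ on $(M_i,M_i+S_{t_i}]$. This is exactly what items 1 and 2 require, and it also identifies $N_{i+1}$.

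Similarly, in item 3 you should not cite Lemma~\ref{Lemma: helpful kneading facts}(2), whose hypothesis $M_{i+1}=N_i$ is not available. By the symmetric version of the observation above, $M_{i-1}$ is the last cutting time before $N_i$. The cutting-time recursion then gives $N_i=M_{i-1}+\RR(M_{i-1})=M_{i-1}+S_{t_{i-1}}$.

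Second, the shift-maximality sentence in item 4 is not an argument. Shift-maximality alone does not bound agreement lengths; indeed $\RR(m)>m$ does occur for co-cutting times $m$. The inequality holds for a different reason. Write $n=S_m$. Then $n-s(n)=S_{Q(m)}$ is itself a cutting time. For every cutting time, $\RR(S_j)=S_{j+1}-S_j=S_{Q(j+1)}\le S_j$, because $Q(j+1)\le j$.
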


We now present a symbolic characterization of persistent recurrence before providing symbolic characterizations of (C5) and (C6); since both (C5) and (C6) imply persistent recurrence, their characterizations will provide sufficient conditions for a unimodal map to have a persistently recurrent turning point. 
%For completeness, we note that by \cite{Bruin_Absorbing_Cantor}, (C4) is equivalent to $Q(k)\to \infty$.

\begin{theorem}\label{Theorem: Main Symbolic Characterization}
    Given a unimodal map $f$ with kneading sequence $\mathcal{K}(f)$, $c$ is persistently recurrent if and only if for all $k\in \N$ there exists an $I\in \N$ such that for all $i\geq I$ one of the following holds:
    \begin{enumerate}
        \item\label{Main 1} $N_i - M_i \geq k$;
        \item\label{Main 2} $N_i - M_i < k$, but $\RR(M_i)\geq k$ and $\RR(M_i - L_i)\geq k$;
        \item\label{Main 3} $\RR(M_i) < k$, but $\RR(N_i)\geq k$ and $\RR(N_i - M_{i-1})\geq k$. 
        %\item whenever $\{i_j\}$ is a sequence of integers greater than $I$ such that $N_{i_j} - M_{i_J} < k$ where $\RR(M_{i_j})\geq k$, then $\RR(M_{i_j}-L_{i_j}) \geq k$;
        %\item whenever $\{i_j\}$ is a sequence of integers greater than $I$ such that $\RR(M_{i_j})\leq k$, then $\RR(N_{i_j})\geq k$ and $\RR(N_{i_j} - M_{i_{j}-1})\geq k$.
    \end{enumerate}
\end{theorem}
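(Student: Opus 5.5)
We prove both directions through the metric characterization (C8): $c$ is persistently recurrent if and only if $r_n(c_1)\to 0$. The quantity $r_{n-1}(c_1)$ is the distance from $c_n$ to the nearer endpoint of $\widetilde D_n=[c_{n-\widetilde s(n)};c_{n-s(n)}]$. Lemma~\ref{Lemma: close to c} converts distances to $c$ into statements about $\RR$: $c_m$ is $\epsilon$-close to $c$ exactly when $\RR(m)$ exceeds a threshold $k=k(\epsilon)$.

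The first reduction is to show that $r_{n-1}(c_1)\to 0$ along all $n$ as soon as it does so along the times $n\in\{M_i,N_i\}$. Between consecutive cutting and co-cutting times the tower interval is carried forward by $f$, and $c\notin \widetilde D_n$ there. By the contraction principle (no wandering intervals, no periodic attractors), a large $r$ at some time $n$ propagates to a large $r$ at the next cut or co-cut. Lemma~\ref{Lemma: K consecutive terms large} gives the bounded-gap form of this. By Lemma~\ref{Lemma: helpful kneading facts}(1), the intermediate cuts and co-cuts between $M_i$ and $M_{i+1}$ are controlled by those at $M_i$ and $N_i$, so it is enough to treat $n=M_i$ and $n=N_i$.

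Next we analyse the geometry at a return time $M_i$, say a co-cutting time; the cutting case is symmetric. Here $c_{M_i}$ is close to $c$, one endpoint of $\widetilde D_{M_i}$ is $c_{M_i-L_i}$, and the other endpoint lies on the opposite side of $c$, at distance measured by the next cut $N_i$. So $r_{M_i-1}(c_1)<\epsilon$ means that either
\begin{itemize}
\item[(a)] $c_{M_i}$ and $c_{M_i-L_i}$ are both within $\epsilon$ of $c$, i.e.\ $\RR(M_i)\ge k$ and $\RR(M_i-L_i)\ge k$; or
\item[(b)] the other endpoint is close to $c_{M_i}$, which is encoded by $N_i-M_i\ge k$, since $N_i-M_i=S_{Q(t_{i-1})}$ is the $\RR$-value of $M_i-L_i$ by Lemma~\ref{Lemma: helpful cutting facts}(2).
\end{itemize}
These give alternatives~\ref{Main 1} and~\ref{Main 2}.

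If instead $\RR(M_i)<k$, so that $c_{M_i}$ is not close to $c$, we pass to the time $N_i$. There Lemma~\ref{Lemma: helpful cutting facts}(3) identifies the endpoint $c_{N_i-M_{i-1}}$, and smallness of $r_{N_i-1}$ forces both $c_{N_i}$ and this endpoint to be near $c$. This is alternative~\ref{Main 3}.

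For the direction ($\Rightarrow$): given $k$, choose $\epsilon$ via Lemma~\ref{Lemma: close to c}. Then use $r_n(c_1)\to0$ at $n=M_i-1$ and $n=N_i-1$, and run the case split above to land in one of~\ref{Main 1}--\ref{Main 3} for all large $i$.

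For the direction ($\Leftarrow$): given $\epsilon$, pick $k$ so that $\RR\ge k$ implies $\epsilon$-closeness to $c$ (the first part of Lemma~\ref{Lemma: close to c}). Each alternative then places $c_n$ within $2\epsilon$ of an endpoint of $\widetilde D_n$ at $n=M_i$ or $n=N_i$. The reduction step above propagates this to all $n$.

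The main obstacle is the translation in case~\ref{Main 1} between the combinatorial quantity $N_i-M_i$ and the metric distance from $c_{M_i}$ to the far endpoint of $\widetilde D_{M_i}$. A large cutting-time gap must really force a small $r$, and conversely. I would handle this using Lemma~\ref{Lemma: helpful kneading facts}(4) together with the nesting of the closest precritical points $z_K$: $c_{M_i}$ lying between $z_{K}$ and $z_{K+1}$ with the branch extending past $z_K$ pins the gap to $S_{K}$-scale. The contraction principle is used only to pass from the symbolic scale back to a uniform $\epsilon$.
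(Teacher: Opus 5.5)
\textbf{The gap is in the ``if'' direction.}

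Your reduction step only handles the times that are neither cutting nor co-cutting. At those times $\widetilde D_n$ is carried monotonically to the next cut or co-cut, so a short interval at the later time pulls back to a short one. Note that the contraction principle moves smallness backward in time, not forward.

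Your further claim, that the cutting and co-cutting times strictly between $N_i$ and $M_{i+1}$ are controlled by $M_i$ and $N_i$, runs the wrong way. For such $n$, the endpoint of $\widetilde D_n$ on the same side as $c_n$ is $c_{n-M_i}$. But $[c_{n-M_i};c_n]=f^{n-M_i}[c;c_{M_i}]$ is a \emph{forward} monotone image of an interval that is short at time $M_i$, and it may well have become long. The other endpoint lies across $c$, so it helps only when $\RR(n)$ is large.

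In addition, each alternative certifies smallness at only one of the two times $M_i,N_i$. For instance, alternative~3 says nothing directly about $r_{M_i-1}(c_1)$. So the hypothesis of your reduction is never verified.

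\textbf{What is missing} is the paper's block argument. For each $i\ge I$, one shows $r_{n-1}(c_1)<\epsilon$ for \emph{all} $n\in(M_{i-1},M_i]$. This is done by producing a short interval at the end of the block and pulling it back; its monotone preimages are same-side endpoint intervals throughout the block.
\begin{itemize}
\item In alternative~1 the short interval is $[c_{M_i};c_{M_i-M_{i-1}}]$: both endpoints are near $c$ and on the same side. It pulls back to $[c_n;c_{n-M_{i-1}}]$.
\item In alternative~3 it is $[c_{N_i};c_{N_i-M_{i-1}}]$, which is the monotone image of $[c_{M_i};c_{M_i-M_{i-1}}]$ under $f^{N_i-M_i}$. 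So you must pull back \emph{across} the return time $M_i$; this is the paper's $z_r$ step.
\item In alternative~2, $[c_{M_i};c_{M_i-L_i}]$ only reaches back to $L_i$. When $L_i>M_{i-1}$ you need an extra observation from Lemma~\ref{Lemma: helpful cutting facts}: $\RR(L_i)=M_i-L_i\ge\RR(M_i-L_i)\ge k$ and $\RR(L_i-M_{i-1})=\RR(L_i)$. Hence $[c_{L_i};c_{L_i-M_{i-1}}]$ is short and covers $(M_{i-1},L_i]$.
\end{itemize}

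\textbf{The ``only if'' direction.} Your case split matches the paper's, but one step is missing at $N_i$. You must rule out that $r_{N_i-1}(c_1)$ is small merely because $c_{N_i}$ is near its same-side endpoint $c_{N_i-M_i}$. This is where the contraction principle enters: $[c;c_{M_i}]$ is not short when $\RR(M_i)<k$, and $f^{N_i-M_i}$ maps it monotonically onto $[c_{N_i-M_i};c_{N_i}]$.

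Two smaller corrections. First, $N_i-M_i$ equals $\RR(M_i-M_{i-1})$, not $\RR(M_i-L_i)$. Second, the ``conversely'' in your last paragraph is false: $c_{M_i}$ being close to $c_{M_i-M_{i-1}}$ does not force $N_i-M_i$ to be large unless $c_{M_i}$ itself is near $c$. That failure is exactly why alternative~3 is needed.
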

\begin{proof}
    Suppose first that $c$ is persistently recurrent. Fix $\epsilon > 0$ and note that there exists a $K = K(\epsilon)$ such that if $x\in B(c,\epsilon)$, then $R(x)\geq K$ and conversely, if $R(x)\geq K+1$, then $x\in B(c,\epsilon)$. By the contraction principle, there exists a $\delta > 0$ such that if $T$ is an interval with $|T|> \epsilon/2$, then $|f^n(T)|>\delta$. Let $J$ be chosen large such that $\delta/J < \epsilon/2$. Since $c$ is persistently recurrent, there exists an $N\in \N$ such that for all $n\geq N$, $r_{n-1}(c_1)< \delta/J$.
    
    Suppose that $n = M_i \geq N$ is a return time and without loss of generality, suppose $n$ is a cutting time. As $r_{n-1}(c_1) < \delta/J$, this implies that either $|c_n - c_{n-s(n)}| < \delta/J$ or $|c_n-c_{n-\tilde{s}(n)}| < \delta/J$. If $c_{n-\tilde{s}(n)}\in B(c,\epsilon)$, then $\RR(n),\RR(n-\tilde{s}(n)) \geq K$; in this case, $\RR(n-\tilde{s}(n)) = N_i - M_i \geq K$. 

    Now consider the case where $c_{n-\tilde{s}(n)}\notin B(c,\epsilon)$. Then $N_i - M_i \leq K$, and without loss of generality we will suppose $N_i - M_i < K$. If $|c_n - c_{n-s(n)}| < \delta/J$, then $\RR(n) \geq K$ and $\RR(n-s(n))\geq K$; that is $\RR(M_i)\geq K$ and $\RR(M_i -L_i)\geq K$. On the other hand, if $|c_n - c_{n-\tilde{s}(n)}| < \delta/J$, then there exists a $z_r \in (c_n; c_{n-\tilde{s}(n)})$ such that $\RR(n-\tilde{s}(n)) = S_r < K$. Thus $c\in f^{S_r}([c_n; c_{n-\tilde{s}(n)}])$. Since $|c_n - c| > \epsilon/2$, we have $|c_{n + S_r} - c_{S_r}| > \delta$. Hence $|c_{n+S_r}- c_{n-\tilde{s}(n)+S_r}|<\delta/J < \epsilon/2$. That is, $\RR(n+S_r) = \RR(N_i) \geq K$ and $\RR(n-\tilde{s}(n)+S_r) = \RR(N_i - M_{i-1})\geq K$.

    Conversely, suppose that for all $k\in \N$ there exists an $I$ such that for all $i\geq I$ one of the three conditions holds. %\Jernej{In what follows, are we using contraction principle correctly or should it rather be as I write?} \sout{Fix $\epsilon > 0$ and let $\delta > 0$ be such that if $|T|<\delta$, then $|f^{-n}(T)| < \epsilon/2$.}
    By the contraction principle, for every \(\epsilon>0\) there exists \(\delta>0\) such that if \(T\) is an interval and
    \(|f^n(T)|<\delta,\)
    then \(|T|<\epsilon/4\), provided \(f^n|_T\) is monotone.
    Fix $J$ large so that $\delta/J < \epsilon/2$. There exists a $K = K(\delta/J)$ such that if $x\in B(c,\delta/J)$, then $R(x) \geq K-1$ and if $R(x)\geq K$, then $x\in B(c,\delta/J)$.  Let $I$ be chosen such that our conditions hold for this $K$ value whenever $i\geq I$. 

    First suppose that $N_i - M_i \geq K$. Then both $c_{M_i}$ and $c_{M_i-M_{i-1}}$ lie inside $B(c,\delta/J)$, and thus $|c_{M_i} - c_{M_i - M_{i-1}}| < 2\delta/J < \epsilon$. Further, by the contraction principle, it follows that $|c_{n}- c_{n-M_{i-1}}|< \epsilon/2$ for all $M_{i-1} < n \leq M_i$; that is, $r_{n-1}(c_1) < \epsilon$ for all $M_{i-1} < n \leq M_{i}$.

    % \Jernej{Instead of the following paragraph I suggest something like the paragraph below in teal.}
    
    % Now suppose that $\RR(M_i) < K$, but $\RR(N_i)\geq K$ and $\RR(N_i-M_{i-1})\geq K$. Let $N_i - M_i = S_r$ where $z_r\in (c_{M_i - M_{i-1}}; c_{M_i})$. Then $c\in f^{S_r}([c_{M_i - M_{i-1}}; c_{M_i}]) = [c_{N_i - M_{i-1}};c_{N_i}]$. Because both $c_{N_i - M_{i-1}},c_{N_i}\in B(c,\delta/J)$, it follows that $|c_{N_i - M_{i-1}} - c_{N_i}|< 2\delta/J$, and hence by the contraction principle, $|c_{M_i- M_{i-1}} - c_{M_i}| < \epsilon/2$. By additionally applying the contraction principle, we see that $r_{n-1}(c_1) < \epsilon$ for all $M_{i-1} < n \leq M_i$. 

    Now suppose that \(\RR(M_i)<K\), but \(\RR(N_i)\geq K\) and
\(\RR(N_i-M_{i-1})\geq K\). Let $N_i-M_i=S_r,$
where \(z_r\) lies between \(c_{M_i-M_{i-1}}\) and \(c_{M_i}\). Then
\[
c\in f^{S_r}\bigl([c_{M_i-M_{i-1}},c_{M_i}]\bigr)
=
[c_{N_i-M_{i-1}},c_{N_i}].
\]
Because both \(c_{N_i-M_{i-1}}\) and \(c_{N_i}\) lie in \(B(c,\delta/J)\), the two intervals
\(
[c_{N_i-M_{i-1}},c]
\qquad\text{and}\qquad
[c,c_{N_i}]
\)
have length less than \(\delta/J<\delta\). The point \(z_r\) divides
\([c_{M_i-M_{i-1}},c_{M_i}]\) into two subintervals, and \(f^{S_r}\) is monotone on each of them. Applying the contraction principle to each branch gives
\[
|c_{M_i-M_{i-1}}-z_r|<\epsilon/4
\quad \text{ and } \quad
|z_r-c_{M_i}|<\epsilon/4.
\]
Therefore, \(|c_{M_i-M_{i-1}}-c_{M_i}|<\epsilon/2\).
For each \(M_{i-1}<n\leq M_i\), the interval defining \(r_{n-1}(c_1)\) is a monotone image of a subinterval of
\([c_{M_i-M_{i-1}},c_{M_i}]\). Thus \(r_{n-1}(c_1)<\epsilon\).

    Lastly, suppose that $\RR(M_i) \geq K$ and $\RR(M_i - L_i)\geq K$, but $N_i - M_i < K$.  Then $c_{M_i},c_{M_i - s(M_i)} \in B(c,\delta/J)$, and hence $r_{M_i-1}(c_1) < \epsilon$. In the case that $L_i = M_{i-2}$, the contraction principle guarantees that for all $n$ with $M_{i-1} \leq n \leq M_i$, $r_{n-1}(c_1) < \epsilon$. In the case that $N_{i-1}\leq L_i < M_i$, it follows that $r_{n-1}(c_1)< \epsilon$ for all $L_i < n \leq M_i$. Additionally, because $\RR(M_i - L_i)\leq  M_i - L_i$, it follows that the itinerary $I(c_{L_i})$ and the itinerary $I(c_{L_i - M_{i-1}})$ agree for $N_i-L_i$ iterates and both $\RR(c_{L_i})\geq K$ and $\RR(c_{L_i - M_{i-1}}) \geq K$. That is, $c_{L_i},c_{L_i - M_{i-1}}\in B(c,\delta/J)$, and thus $|c_{L_i} - c_{L_i - M_{i-1}}| < 2\delta/J$. By the contraction principle, it will follow that $r_{n-1}(c_1) < \epsilon$ for all $M_{i-1}< n \leq L_i$, and by combining with the above, we see that $r_{n-1}(c_1) < \epsilon$ for all $M_{i-1} < n \leq M_i$.

    We thus conclude that for all $n\geq M_{i-1} + 1$, $r_{n-1}(c_1) < \epsilon$. Since we may do this argument for any $\epsilon > 0$, it follows that $c$ is persistently recurrent.
\end{proof}

\begin{corollary}\label{Corollary to characterization}
    If $c$ is persistently recurrent, then for all $k\in \N$, there exists an $I\in \N$ such that for all $i\geq I$, one of the following holds:
    \begin{enumerate}
        \item $N_i - M_i \geq k$;
        \item $N_i-M_i < k$ but $\RR(M_i)\geq k$ and $M_i-L_i \geq k$; or
        \item $\RR(M_i)<k$ but $\RR(N_i)\geq k$ and $N_i - M_{i-1}\geq k$.
    \end{enumerate}
\end{corollary}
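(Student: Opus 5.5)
The plan is to derive the corollary directly from the forward direction of Theorem~\ref{Theorem: Main Symbolic Characterization}. The only new ingredient is the elementary inequality $\RR(m)\le m$ for the relevant indices. Fix $k\in\N$. Since $c$ is persistently recurrent, Theorem~\ref{Theorem: Main Symbolic Characterization} gives an $I\in\N$ such that for every $i\ge I$ one of its three alternatives holds. I will show that each alternative of the theorem implies the corresponding alternative of the corollary, with the same $k$ and the same $I$.

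Alternative (1) is identical in both statements, so nothing needs to be done there. For (2) and (3) the task is to replace the conditions $\RR(M_i-L_i)\ge k$ and $\RR(N_i-M_{i-1})\ge k$ by the weaker conditions $M_i-L_i\ge k$ and $N_i-M_{i-1}\ge k$. For this I will check that $\RR(m)\le m$ holds for $m=M_i-L_i$ and for $m=N_i-M_{i-1}$. This is exactly what Lemma~\ref{Lemma: helpful cutting facts} records: part~(4) gives $\RR(n-s(n))\le n-s(n)$, and the analogous statement for $\tilde s$ holds by the symmetric version of the lemma.

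Concretely, $M_i-L_i$ is of the form $n-s(n)$ or $n-\tilde s(n)$ with $n=M_i$. Part~(2) of Lemma~\ref{Lemma: helpful cutting facts} identifies $\RR(M_i-L_i)$ with $N_i-M_i$ or a smaller cutting-time difference, which is at most $M_i-L_i$. For a direct argument, I can also use that $\RR(m)=\min\{j: e_{m+j}\ne e_j\}$ and that, by Lemma~\ref{Lemma: helpful kneading facts}(3),(2), the difference $M_i-L_i$ is a cutting time $S_{Q(t_i-1)}$ or $S_{t_{i-1}}$. Then $\RR(S_j)=S_{Q(j+1)}\le S_j$ by admissibility of the kneading map, since $Q(j+1)\le j$.

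Similarly, $N_i-M_{i-1}=S_{t_{i-1}}$, and Lemma~\ref{Lemma: helpful cutting facts}(3) gives $\RR(N_i-M_{i-1})=S_{Q(t_{i-1}+1)}\le S_{t_{i-1}}$, again because $Q(j+1)\le j$. In each case $k\le\RR(m)\le m$, which yields the corollary's alternatives.

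The only point requiring care is justifying $\RR(S_j)\le S_j$, which follows from $S_{j+1}=S_j+\RR(S_j)$ and $S_{j+1}-S_j=S_{Q(j+1)}$ with $Q(j+1)\le j$. I expect no serious obstacle. The main subtlety is making sure the identification of $M_i-L_i$ and $N_i-M_{i-1}$ as cutting times, via Lemma~\ref{Lemma: helpful kneading facts}, is applied in the correct case ($M_{i+1}=N_i$ versus $M_{i+1}>N_i$).
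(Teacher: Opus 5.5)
Your proposal is correct and follows the paper's proof: you apply the forward direction of Theorem~\ref{Theorem: Main Symbolic Characterization} and weaken alternatives (2) and (3) using $\RR(M_i-L_i)\le M_i-L_i$ and $\RR(N_i-M_{i-1})\le N_i-M_{i-1}$. The paper simply asserts these inequalities ``by definition'', and you justify them correctly by noting that both quantities are cutting times $S_j$ with $\RR(S_j)=S_{Q(j+1)}\le S_j$. (Three minor slips, none of which your direct argument depends on: part~(2) of Lemma~\ref{Lemma: helpful cutting facts} computes $\RR(M_i-M_{i-1})$, not $\RR(M_i-L_i)$; your formulas $S_{Q(t_i-1)}$, $S_{t_{i-1}}$ are the values of $M_{i+1}-L_{i+1}$ rather than $M_i-L_i$; and $Q(j+1)\le j$ is the basic property $Q(k)<k$, not admissibility.)
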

\begin{proof}
    By definition, $M_i - L_i \geq \RR(M_i - L_i)$ and $N_i - M_{i-1}\geq \RR(N_i - M_{i-1})$, so this follows directly from Theorem~\ref{Theorem: Main Symbolic Characterization}.
\end{proof}

\begin{corollary}\label{2nd Corollary to characterization}
    If $c$ is persistently recurrent, then for all $k\in \N$, there exists $I\in \N$ such that for all $i\geq I$, if $\RR(M_i) < k$, then $\RR(M_{i+1})\geq k$.
\end{corollary}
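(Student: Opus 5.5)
We will derive this from Theorem~\ref{Theorem: Main Symbolic Characterization} and the kneading relations of Lemma~\ref{Lemma: helpful kneading facts}. The aim is to show that whenever $\RR(M_i)<k$ and $i$ is large, the return at $M_{i+1}$ must be deep. Fix $k$. We apply the theorem with a slightly larger threshold $k'\ge k$, chosen so that $S_{j}\ge k$ whenever $S_{j}$ is forced to be at least $k'$. We obtain $I$ such that for $i\ge I$ one of the three alternatives holds with $k'$ in place of $k$.

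Now suppose $i\ge I$ and $\RR(M_i)<k\le k'$. Alternative (2) requires $\RR(M_i)\ge k'$, so it is excluded. We are left with (1), namely $N_i-M_i\ge k'$, or (3), namely $\RR(N_i)\ge k'$ and $\RR(N_i-M_{i-1})\ge k'$. Alternative (1) cannot actually occur. By Lemma~\ref{Lemma: helpful kneading facts} we have $N_i-M_i=S_{t_i-k_i}$ for some $k_i\ge1$, and this is at most $S_{t_i-1}<S_{t_i}=\RR(M_i)<k'$. The inequality comes from item (1) of that lemma with $j=1$, since $n_1=N_i=M_i+S_{t_i-k}$. So case (3) holds, and $\RR(N_i)\ge k'$.

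It remains to pass from $N_i$ to $M_{i+1}$. If $M_{i+1}=N_i$ we are done immediately. Otherwise $N_i<M_{i+1}$. In that case we use the structure from the return-time recursion: $\RR(N_i)\ge k'$ means the orbit at $N_i$ shadows the critical orbit for at least $k'$ steps. The next cutting or co-cutting time after $N_i$ then occurs at $N_i+\RR(N_i)$ or later. The return times between $M_i$ and $M_{i+1}$ are exactly the times $n_j$ listed in item (1) of Lemma~\ref{Lemma: helpful kneading facts}, with $M_{i+1}=M_i+S_{t_i-1}$. This would give $M_{i+1}-M_i=S_{t_i-1}<k'$, whereas $N_i+\RR(N_i)\le M_{i+1}$ would be needed. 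Since $N_i<M_{i+1}<N_i+k'$ is incompatible with $\RR(N_i)\ge k'$, we conclude that $N_i=M_{i+1}$. This step is the main obstacle: the equality $N_i=M_{i+1}$ must be justified carefully from the definition $M_{i+1}=M_i+\RR^-(M_i)$. The point is that every cutting or co-cutting time $n$ with $N_i\le n<M_{i+1}$ satisfies $\RR(n)=n_{j+1}-n$, which is a difference of consecutive terms and is smaller than $S_{t_i}<k'$. This contradicts $\RR(N_i)\ge k'$ unless $N_i=M_{i+1}$.

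Hence $M_{i+1}=N_i$ and $\RR(M_{i+1})=\RR(N_i)\ge k'\ge k$, which proves the corollary.
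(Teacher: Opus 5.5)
Your argument is correct and follows the paper's route. When $\RR(M_i)<k$, alternatives (1) and (2) of Theorem~\ref{Theorem: Main Symbolic Characterization} are excluded; for (1) this is because $N_i-M_i\le S_{t_i-1}<S_{t_i}=\RR(M_i)$. Alternative (3) then gives $\RR(N_i)\ge k$, which forces $N_i=M_{i+1}$. You also justify this last step, which the paper only asserts: if $N_i<M_{i+1}$ then $\RR(N_i)=n_2-N_i<S_{t_i-1}<k$. Two small points: this relies on all cutting/co-cutting times in $(M_i,M_{i+1}]$ being of one type (e.g.\ via Lemma~\ref{Lemma: helpful cutting facts}(1)), so your general remark about ``the next cutting or co-cutting time after $N_i$'' holds only in that setting, and the auxiliary $k'$ can simply be taken equal to $k$.
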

\begin{proof}
    Observe that if conditions \ref{Main 1} and \ref{Main 2} of Theorem~\ref{Theorem: Main Symbolic Characterization} are satisfied, then $\RR(M_i)\geq k$. If $\RR(M_i) < k$, then condition \ref{Main 3} implies that $\RR(N_i)\geq k$; in this case, it follows that $N_i = M_{i+1}$.
\end{proof}

Note that Corollaries~\ref{Corollary to characterization} and \ref{2nd Corollary to characterization} provide useful ways to quickly confirm that $c$ is not persistently recurrent. In general, $M_i-L_i \geq k$ does not imply that $\RR(M_i-L_i)\geq k$, nor does $\RR(M_i)\geq k$ imply either $N_i - M_i \geq k$ or $\RR(M_i - L_i)\geq k$. Hence, we do not claim that either converse holds. We now present symbolic characterizations for (C5) and (C6).

\begin{theorem}\label{Theorem: C5 characterization}
    (C5) holds if and only if for all $M\in \N$, there exists $N\in \N$ such that whenever $n\geq N$ is a cutting or co-cutting time, either $\RR(n-s(n)) > M$ or $\RR(n-\tilde{s}(n)) > M$.
\end{theorem}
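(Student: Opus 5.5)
The plan is to translate (C5) into a statement about the endpoints of $\widetilde D_n$, using $\widetilde D_n=[c_{n-\tilde s(n)};c_{n-s(n)}]$ and Lemma~\ref{Lemma: close to c}. A symmetric neighbourhood $U=(u,\hat u)\ni c$ lies in $\widetilde D_n$ exactly when the two endpoints of $\widetilde D_n$ sit on opposite sides of $c$ and both lie outside $U$. By Lemma~\ref{Lemma: close to c}, with $U_K=(z_K,\hat z_K)$, the condition $c_m\notin U_K$ is equivalent to $\RR(m)\le S_K$. Since the intervals $(z_K,\hat z_K)$ shrink to $c$, it suffices to check (C5) on this cofinal family. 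So the condition ``$U_K\subset\widetilde D_n$'' becomes: $c\in \widetilde D_n$ and both $\RR(n-s(n))\le S_K$ and $\RR(n-\tilde s(n))\le S_K$. Here one has to be careful about the closed/open endpoints and the meaning of $\RR(0)$ when $n$ is itself a cutting time; with $s(n)<n$ strict this does not arise.

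Next I would reduce to times $n$ at which $c\in\widetilde D_n$. By the remark preceding Lemma~\ref{Lemma: K consecutive terms large}, this holds when $n$ is a cutting or co-cutting time. Conversely, if $c\in \widetilde D_n$, then $\widetilde D_{n+1}$ is obtained by cutting at $c$, so $n$ is itself a cutting or co-cutting time. This follows from the recursive definition of the extended tower together with the fact that every cut there occurs at a cutting or co-cutting time (Lemma 2 of \cite{Bruin94} as recalled). Hence $\widetilde D_n\supset U$ can only happen at cutting or co-cutting times $n$, and at such times it happens iff both $\RR(n-s(n))$ and $\RR(n-\tilde s(n))$ are at most $S_K$.

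The two directions then follow. If the symbolic condition holds, fix $U$ symmetric, pick $K$ with $U_K\subset U$, and take $M=S_K$. For $n\ge N$ a cutting or co-cutting time, one endpoint of $\widetilde D_n$ lies in $U_K\subset U$, so $U\not\subset \widetilde D_n$. For all other $n$ we have $c\notin\widetilde D_n$, so again $U\not\subset\widetilde D_n$. Conversely, assume (C5) and let $M$ be given. Choose $K$ with $S_K\ge M$ and apply (C5) to $U=U_K$ to get $N$. For a cutting or co-cutting time $n\ge N$ we have $c\in\widetilde D_n$ but $U_K\not\subset\widetilde D_n$, so some endpoint lies in $U_K$; its $\RR$-value exceeds $S_K\ge M$.

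The main obstacle is the precise endpoint bookkeeping. One issue is non-strict versus strict inequalities at $z_K$: a point equal to $z_K$ would be precritical, and $c_m$ is never precritical since $c$ is not periodic, which resolves it. The other is checking that the endpoint formula for $\widetilde D_n$ is valid for all $n\ge\widetilde S_0$, with small $n$ absorbed into the choice of $N$. If the equivalence ``$c\in\widetilde D_n$ iff $n$ is a cutting or co-cutting time'' turns out awkward, I will instead verify directly that non-cut times inherit $\widetilde D_n=f(\widetilde D_{n-1})$ not containing $c$.
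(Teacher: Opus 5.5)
Your proposal is correct and follows the paper's proof: in both directions you use Lemma~\ref{Lemma: close to c} to translate $U\subset\widetilde D_n$ into a condition on the endpoints $c_{n-s(n)}$ and $c_{n-\tilde s(n)}$, after noting that $\widetilde D_n$ can contain a symmetric neighbourhood of $c$ only at cutting or co-cutting times. The differences are cosmetic: you use the nice intervals $(z_K,\hat z_K)$ in both directions, whereas the paper uses $B(c,\epsilon)$ and the general part of that lemma for the converse, and you state explicitly that $c\in\widetilde D_n$ at such times forces an endpoint into $U$, a step the paper leaves implicit.
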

\begin{proof}
    First suppose that (C5) holds. Fix $M$ and let $k\in \N$ be chosen such that $S_{k-1} < M\leq S_k$ and set $\epsilon = |z_k-c|$; let $U = B(c,\epsilon)$. Then there exists an $N\in \N$ such that for all $n\geq N$, $U\not\subset \widetilde{D}_n$. Hence, for all cutting and co-cutting times $n\geq N$, it follows that either $c_{n-s(n)}\in U$ or $c_{n-\tilde{s}(n)}\in U$. By Lemma~\ref{Lemma: close to c}, $\RR(n-s(n))> M$ or $\RR(n-\tilde{s}(n))> M$.
    
    Conversely, fix $\epsilon > 0$ and let $M$ be chosen as in Lemma~\ref{Lemma: close to c} so that whenever $\RR(n)>M$, then $|c_n - c| < \epsilon$. Recall that \(\widetilde D_n\) can contain a symmetric neighbourhood of \(c\) only when \(n\) is a cutting or co-cutting time. Let $U = B(c,\epsilon)$ and note if $n$ is not a cutting or co-cutting time, then $U\not\subset \widetilde{D}_n$. Further, by assumption, there exists some $N\in \N$ such that for every cutting or co-cutting time $n\geq N$ it follows that either $\RR(n-s(n))> M$ or $\RR(n-\tilde{s}(n))>M$. Hence, for all cutting and co-cutting times $n\geq N$, either $c_{n-s(n)}\in U$ or $c_{n-\tilde{s}(n)}\in U$. It thus follows that for all $n\geq N$, $\widetilde{D}_n\not\supset U$.
\end{proof}

\begin{corollary}
    Suppose (C5) holds. Then $\max\{N_i - M_i, M_i - L_i\}\to \infty$; that is, $N_i - L_i\to \infty$. 
\end{corollary}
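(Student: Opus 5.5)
We prove the contrapositive. Suppose $N_i - L_i \not\to \infty$. Then there are a constant $B\in\N$ and infinitely many indices $i$ with $N_i - L_i \le B$. For each such $i$ we will find a cutting or co-cutting time $n$, with $n\to\infty$ along the subsequence, at which both $\RR(n-s(n))$ and $\RR(n-\tilde{s}(n))$ are bounded by a constant $M$ depending only on $B$. Theorem~\ref{Theorem: C5 characterization} then shows that (C5) fails. The equivalence of $\max\{N_i-M_i, M_i-L_i\}\to\infty$ with $N_i - L_i\to\infty$ needs no work: $L_i < M_i < N_i$, so $N_i - L_i = (N_i - M_i) + (M_i - L_i)$ lies between the maximum of the two differences and twice that maximum.

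The natural candidate is $n = M_i$. Take $M_i$ to be a cutting time; the co-cutting case is symmetric. Then $s(M_i) = L_i$. Since $N_i$ is the first cut or co-cut after $M_i$ and is of the opposite type, the last co-cutting time before $M_i$ is $\tilde{s}(M_i) = M_{i-1}$. Both relevant quantities are controlled by $B$:
\begin{itemize}
\item From the definition of $\RR$ we always have $\RR(m)\le m$, so $\RR(M_i - L_i) \le M_i - L_i \le B$.
\item Lemma~\ref{Lemma: helpful cutting facts} (case $n = M_{i+1}$, with the roles of the indices shifted) gives $\RR(M_i - M_{i-1}) = N_i - M_i \le B$.
\end{itemize}
So $\RR(n - s(n)) \le B$ and $\RR(n - \tilde s(n)) \le B$ for infinitely many cutting or co-cutting times $n = M_i\to\infty$. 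Taking $M = B$ in Theorem~\ref{Theorem: C5 characterization} contradicts (C5).

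The main obstacle is bookkeeping rather than analysis. We must check that $\tilde s(M_i) = M_{i-1}$ when $M_i$ is a cutting time; equivalently, that no co-cutting time lies strictly between $M_{i-1}$ and $M_i$. This follows from the description of return times: they alternate in type, and each return time is the last cutting or co-cutting time of its type before the next one of opposite type. We must also match the indexing in item 2 of Lemma~\ref{Lemma: helpful cutting facts}, which is stated for a co-cutting return time with $n = M_{i+1}$. If that index shift turns out to be awkward, an alternative is to read the bound off the explicit formula $\widetilde D_{M_i} = (c_{M_i - \tilde s(M_i)}, c_{M_i - s(M_i)})$ from Bruin's lemma, and to use $N_i - M_i = \RR(M_i - M_{i-1})$, which is immediate from the definition of co-cutting times as the times at which the co-kneading branch next covers $c$.
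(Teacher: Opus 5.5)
Your argument is essentially the one the paper intends. The paper states this corollary without proof, directly after Theorem~\ref{Theorem: C5 characterization}, and it follows by applying that theorem at $n=M_i$. The ingredients are the ones you use: $\tilde s(M_i)=M_{i-1}$ when $M_i$ is a cutting time, $\RR(M_i-M_{i-1})=N_i-M_i$ from Lemma~\ref{Lemma: helpful cutting facts}, and $\RR(M_i-L_i)\le M_i-L_i$.

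One justification needs repair. It is \emph{not} true that $\RR(m)\le m$ for every $m$. The paper's own Example~\ref{Example: reg rec homeo} has co-cutting times with $\RR(|A_n|)>|A_n|$, and Lemma~\ref{Lemma:Long co-cuts not slowly recurrent} is about exactly this situation.

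The inequality you need is still true, because $M_i-L_i$ is always a cutting time:
\begin{itemize}
\item if $M_i=S_k$, then $M_i-L_i=S_k-S_{k-1}=S_{Q(k)}$;
\item if $M_i=\widetilde S_l$, then $M_i-L_i=\widetilde S_l-\widetilde S_{l-1}=S_{\widetilde Q(l)}$.
\end{itemize}
For any cutting time $S_j$ one has $\RR(S_j)=S_{j+1}-S_j=S_{Q(j+1)}\le S_j$, since $Q(j+1)\le j$. With that replacement, the rest of your proof goes through as written.
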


\begin{theorem}\label{Theorem: C6 characterization}
    The map $f$ is not critically monotonic (i.e., (C6) holds)  if and only if for all $M\in \N$, there exists $I\in \N$ such that whenever $i\geq I$ and $\RR(M_i) = S_{t_i} > M$, then either $\RR(M_i - M_{i-1}) = N_i - M_i > M$ or $$\RR(M_i - L_i) = \begin{cases}
        S_{Q(t_{i-2}+1)} > M & \text{ if } M_i = N_{i-1}; \\
        S_{Q(Q(t_{i-1}-1)+1)}>M & \text{ if } M_i > N_{i-1}.
    \end{cases}$$
\end{theorem}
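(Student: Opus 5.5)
The plan is to imitate the proof of Theorem~\ref{Theorem: C5 characterization}, with one change. (C6) only constrains those $n$ for which $c_n\in U$. So the first step is to reduce to return times: only a return time $n=M_i$ can have $c_n$ close to $c$ together with $\widetilde D_n$ containing a symmetric neighbourhood. Then each analytic condition is translated into a symbolic one via Lemma~\ref{Lemma: close to c}. The dictionary is as follows:
\begin{itemize}
\item $c_{M_i}\in B(c,\epsilon)$ corresponds to $\RR(M_i)=S_{t_i}>M$;
\item the endpoint $c_{M_i-\tilde s(M_i)}$ or $c_{M_i-s(M_i)}$ of the opposite type, which is $c_{M_i-M_{i-1}}$, lies in $B(c,\epsilon)$ exactly when $\RR(M_i-M_{i-1})>M$;
\item the endpoint of the same type, $c_{M_i-L_i}$, lies in $B(c,\epsilon)$ exactly when $\RR(M_i-L_i)>M$.
\end{itemize}
Here $M$ and $\epsilon=|z_k-c|$ are matched by choosing $S_{k-1}<M\le S_k$, exactly as in the (C5) proof.

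The second step identifies the symbolic quantities with the expressions displayed in the statement, using Lemmas~\ref{Lemma: helpful kneading facts} and \ref{Lemma: helpful cutting facts}. Lemma~\ref{Lemma: helpful cutting facts}(2), applied with index $i-1$, gives $\RR(M_i-M_{i-1})=N_i-M_i$. For the same-type endpoint we split into two cases.
\begin{itemize}
\item If $M_i=N_{i-1}$, Lemma~\ref{Lemma: helpful kneading facts}(2) gives $M_i-L_i=S_{t_{i-2}}$. Lemma~\ref{Lemma: helpful cutting facts}(3), or the identity $\RR(S_j)=S_{Q(j+1)}$ that follows from $S_{j+1}-S_j=S_{Q(j+1)}$, then yields $\RR(M_i-L_i)=S_{Q(t_{i-2}+1)}$.
\item If $M_i>N_{i-1}$, Lemma~\ref{Lemma: helpful kneading facts}(3) gives $M_i-L_i=S_{Q(t_{i-1}-1)}$, and the same identity yields $S_{Q(Q(t_{i-1}-1)+1)}$.
\end{itemize}

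For the forward direction, assume (C6) and fix $M$. Take $U=B(c,\epsilon)=(z_k,\hat z_k)$ with $\epsilon$ as above, and let $N$ be given by (C6). Choose $I$ with $M_{I-2}\ge N$. Suppose $i\ge I$ and $\RR(M_i)>M$. Then $c_{M_i}\in U$, so (C6) gives $\widetilde D_{M_i}\not\supset U$. Since $c\in\widetilde D_{M_i}=[c_{M_i-\tilde s(M_i)};c_{M_i-s(M_i)}]$, one of these two endpoints must lie in $U$, and the dictionary converts this into the stated alternative.

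For the converse, fix a symmetric $U\ni c$. Shrink it to some $(z_k,\hat z_k)\subset U$; this only makes the condition ``$\widetilde D_n\not\supset$'' harder to satisfy for the smaller set, so we must be careful here. The remedy is to instead pick $M$ with $B(c,\epsilon')\subset U$, where $\epsilon'$ is supplied by Lemma~\ref{Lemma: close to c} so that $\RR(x)>M$ implies $|x-c|<\epsilon'$. Two kinds of $n$ need no argument: those with $c_n\notin U$ satisfy (C6) trivially, and those with $c\notin\widetilde D_n$ cannot have $\widetilde D_n\supset U$.

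The main obstacle is the remaining times $n$, at which $c_n\in U$ and $c\in \widetilde D_n$, but which are not return times. At such an $n$ both endpoints of $\widetilde D_n$ are of the form $c_{n-s(n)}$ and $c_{n-\tilde s(n)}$. The closest-return structure, that is, the last item of Bruin's lemma together with Lemma~\ref{Lemma: helpful cutting facts}(1),(4), should show the following: for a cutting or co-cutting time $n$ strictly between $N_i$ and $M_{i+1}$ we have $\RR(n)=\RR(n-M_i)$. Hence the endpoint $c_{n-M_i}$ is as close to $c$ as $c_n$, and $\widetilde D_n$ cannot contain a symmetric neighbourhood around $c_n$ unless the same failure already occurs at a return time.

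I expect this reduction to return times, together with the handling of the non-symmetric comparison between $U$ and nice intervals $(z_k,\hat z_k)$, to be the delicate part. Once it is done, the converse follows by the same endpoint argument as in Theorem~\ref{Theorem: C5 characterization}.
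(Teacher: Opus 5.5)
Your outline follows the paper's proof step for step:

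\begin{itemize}
\item the same identification of $\RR(M_i-M_{i-1})$ and $\RR(M_i-L_i)$ through Lemmas~\ref{Lemma: helpful kneading facts} and~\ref{Lemma: helpful cutting facts};
\item the same forward argument, applying (C6) at return times to $U=B(c,\epsilon(M))$;
\item the same converse, in which return times are handled by the hypothesis and the other cutting or co-cutting times by $\RR(n)=\RR(n-M_i)$.
\end{itemize}

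There are two small slips. First, Lemma~\ref{Lemma: helpful cutting facts}(1) must be applied to every cutting or co-cutting time $M_i<n<M_{i+1}$. That range includes $n=N_i$ when $N_i<M_{i+1}$, which your range ``strictly between $N_i$ and $M_{i+1}$'' leaves out. Second, your matching of $M$ and $k$ is off by one. The forward direction uses the two-sided equivalence $\RR(x)>M\iff c_x\in(z_k,\hat z_k)$. Since $\RR$ only takes the values $S_j$, this holds exactly when $S_k\le M<S_{k+1}$. With your choice $S_{k-1}<M\le S_k$, a return time with $\RR(M_i)=S_k>M$ has $c_{M_i}\notin(z_k,\hat z_k)$, so (C6) cannot be invoked. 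The (C5) proof uses only one of the two implications, which is why the same choice is harmless there.

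The genuine problem is the remedy you propose for an arbitrary symmetric $U$.

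\begin{itemize}
\item Choosing $M$ with $\{x: R(x)>M\}\subset U$ controls where the endpoints lie. It says nothing about times with $c_n\in U$ but $\RR(n)\le M$, and the hypothesis at level $M$ does not see those times.
\item Choosing $U\subset\{x:R(x)>M\}$ instead loses the conclusion that the endpoint lies in $U$.
\item Unlike (C5), condition (C6) is not monotone in $U$, so you cannot trade $U$ for a nearby $(z_k,\hat z_k)$.
\end{itemize}

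The intermediate times have the same defect. The equality $\RR(n)=\RR(n-M_i)$ only puts $c_n$ and $c_{n-M_i}$ in the same level set of $R$. Moreover, $D_n\subset\widetilde D_n$ (with common endpoint $c_{n-s(n)}$, resp.\ $c$ separating $c_{n-\tilde s(n)}$ from $c_n$ at co-cutting times) forces $c_n$ to lie strictly between $c$ and $c_{n-M_i}$. So the endpoint is farther from $c$ than $c_n$, not ``as close''. Membership in $U$ therefore transfers only when $U$ is a union of level sets of $R$, i.e.\ $U=(z_k,\hat z_k)$.

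The paper's converse tacitly works with exactly this $U=B(c,\epsilon(M))$. So your argument, like the paper's, establishes that the symbolic condition is equivalent to (C6) restricted to the intervals $(z_k,\hat z_k)$. Reaching every symmetric $U$ needs a further argument, which neither your proposal nor the paper's proof supplies.
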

\begin{proof}
    First of all, note that by applying Lemmas~\ref{Lemma: helpful kneading facts} and \ref{Lemma: helpful cutting facts}, we see that $\RR(M_i-L_i)= S_{Q(t_{i-2}+1)}$ when $M_i = N_{i-1}$ and $\RR(M_i - L_i) = S_{Q(Q(t_{i-1}-1)+1)} $ when $M_i > N_{i-1}$.
    
    Suppose first that (C6) holds. Fix $M\in \N$ and let $\epsilon = \epsilon(M)$ be chosen as in Lemma~\ref{Lemma: close to c}. Set $U = B(c,\epsilon)$. Then there exists an $N\in \N$ such that for every $n\geq N$, either $c_n\notin U$ or $\widetilde{D}_n\not\supset U$. Let $I$ be chosen such that $M_I\geq N$. Then for all $i\geq I$, we have by Lemma~\ref{Lemma: close to c} that $c_{M_i}\in U$ if and only if $\RR(M_i)> M$. Additionally, if $c_{M_i}\in U$, then we have that $\widetilde{D}_{M_i}\not\supset U$, so either $c_{M_i - M_{i-1}} \in U$ or $c_{M_i-L_i}\in U$. That is, $\RR(M_i - M_{i-1}) > M$ or $\RR(M_i - L_i) > M$.

    Conversely, suppose that for all $M\in \N$ there exists $I\in \N$ such that whenever $i\geq I$ and $\RR(M_i) > M$, then either $\RR(M_i - M_{i-1}) > M$ or $\RR(M_i - L_i) > M$.  Let $N = M_{I}$. Note that it suffices to consider only the cutting and co-cutting times $n \geq N$, as otherwise it immediately follows that $\widetilde{D}_n\not\supset U$.  Thus, suppose $n\geq N$ is a cutting or co-cutting time. If $n = M_i$ for some $i\in \N$, then it follows by our assumptions that if $\RR(M_i) > M$, then either $\RR(M_i- M_{i-1}) > M$ or $\RR(M_i - L_i)> M$, and thus either $c_{M_i - M_{i-1}}\in U$ or $c_{M_i - L_i}\in U$; in either case, $\widetilde{D}_n\not\supset U$. If $M_i < n < M_{i+1}$, then it follows that $\RR(n) = \RR(n-M_i)$. Thus, $\RR(n) > M$ if and only if $\RR(n-M_i) > M$, which implies that $c_n \in U$ if and only if $c_{n-M_i}\in U$, in which case $\widetilde{D}_n\not\supset U$. Hence (C6) holds.
\end{proof}

\begin{corollary}
    If $N_i - M_i\to \infty$, then (C6) holds.
\end{corollary}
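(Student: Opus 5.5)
We prove the corollary by verifying the criterion of Theorem~\ref{Theorem: C6 characterization}. Fix $M\in\N$. Because $N_i-M_i\to\infty$, there is an $I\in\N$ with $N_i-M_i>M$ for every $i\geq I$. Now take any $i\geq I$ with $\RR(M_i)=S_{t_i}>M$. The first alternative of Theorem~\ref{Theorem: C6 characterization} asks for $\RR(M_i-M_{i-1})=N_i-M_i>M$, and this holds by the choice of $I$. So the criterion is met for every $M$, and (C6) follows; we never need the second alternative involving $\RR(M_i-L_i)$.

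The one point to check is the identity $\RR(M_i-M_{i-1})=N_i-M_i$. It is the equality written in the statement of Theorem~\ref{Theorem: C6 characterization}, and it also follows from Lemma~\ref{Lemma: helpful cutting facts}(2), applied with the index shifted from $i+1$ to $i$ (together with its symmetric version, depending on whether $M_{i-1}$ is a cutting or a co-cutting return time). That lemma gives $\RR(M_i-\tilde s(M_i))=\RR(M_i-M_{i-1})=N_i-M_i$, or the analogous statement with $s(M_i)$ in place of $\tilde s(M_i)$. Hence the hypothesis bounds exactly the quantity the theorem requires.

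As a sanity check, the same conclusion can be seen directly, without the theorem. Choose $\epsilon$ from $M$ as in Lemma~\ref{Lemma: close to c}, and suppose $c_{M_i}\in U=B(c,\epsilon)$. Then $\RR(M_i-M_{i-1})>M$ puts the other endpoint $c_{M_i-M_{i-1}}$ of $\widetilde D_{M_i}$ inside $U$, so $\widetilde D_{M_i}\not\supset U$. For intermediate cutting and co-cutting times $M_i<n<M_{i+1}$, one argues exactly as in the proof of Theorem~\ref{Theorem: C6 characterization}. No step presents a real obstacle; the only care needed is the index bookkeeping in Lemma~\ref{Lemma: helpful cutting facts}.
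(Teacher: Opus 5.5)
Your proposal is correct and matches the paper's intended argument. The paper states this corollary without proof as an immediate consequence of Theorem~\ref{Theorem: C6 characterization}, since $N_i-M_i\to\infty$ forces the first alternative $\RR(M_i-M_{i-1})=N_i-M_i>M$ to hold for all large $i$. Your justification of that identity via Lemma~\ref{Lemma: helpful cutting facts}(2) with the index shifted is also right.
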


We note that the fact that Theorem~\ref{Theorem: C6 characterization} needs to hold for all $M\in \N$ and not just for large $M$ is important. Consider Example~\ref{Example: Fixed C7 not C6} which satisfied property (C7) but not (C6). Let $M=2$. Then for each co-cutting return time of the form $M_i = 4\cdot 5^n-2$, we have that $\RR(M_i) = 3 > M$, however $\RR(M_i - L_i) = 2$ and $\RR(M_i - M_{i-1}) = 1$. Although the conditions for Theorem~\ref{Theorem: C6 characterization} are satisfied for all $M\geq 3$, the fact it fails for $M=2$ tells us that the map $f$ is critically monotonic.

     \section{A family of unimodal maps with wild Cantor attractors}\label{Section: Absorbing Family}

%{\color{blue} Should we move this section to right after our characterization of persistent recurrence? We could make the narrative about the relation persistent recurrence has with the existence of absorbing cantor sets to be more cohesive. We could then end the paper with a section on interesting examples of unimodal maps with various types of recurrence and other interesting behaviors (what is currently sections 5 and 6). Where does Section 4 fit in this?} \Jernej{I have added a few papers that are connected to this section in Overleaf. Section 4 in my opinion also fits in the study of different types of recurrence and since it implies CE condition it is interesting also from measure theoretic point of view (see the uploaded paper of Avila). I agree that the narrative of the paper should be around persistent recurrence and Wild cantor attractors; we could phrase it in a way that we offer a new viewpoint on the problem of classifying Wild Cantor attractors using symbolic dynamics and besides that also clear up some relationships and confusions between different types of recurrence which are important to understand the overall picture within unimodal maps. I am not sure if Wild Cantor attractors beyond Fibonacci maps have actually been explicitly described in the literature, do you know? I plan to dig in the literature in more detail soon.}

In this section we provide a technique for generating kneading sequences of unimodal maps with absorbing Cantor sets. Each kneading sequence will correspond to a non-renormalizable unimodal map with an embedded odometer; we will then apply \cite[Theorem 6.1]{Bruin_Absorbing_Cantor}. This section is motivated by the example provided in \cite[Section 6]{LiShen}, which demonstrated the existence of a unimodal map with embedded odometer that has an absorbing Cantor set. The technique presented here is a modification of the construction of kneading sequences belonging to maps with embedded odometers in \cite{Alvin_strange_star}.

\begin{definition}
    Let $\gamma = (j_1,j_2, \ldots)$ to be a sequence of integers with each $j_i\geq 3$ and let $H_0C$ and $E_0C$ be two finite shift-maximal sequences that differ in precisely one position. We refer to $(H_0,E_0,\gamma)$ as an \emph{kneading odometer triple}. We will let $\Delta=1$ if $H_0$ has even parity and $\Delta = 0$ if $H_0$ has odd parity; set $\Delta' = 1-\Delta$.
\end{definition}

\begin{lemma}\label{Lemma: star product is shift-maximal}
    Let $H_0$ and $E_0$ be as in the definition of a kneading odometer triple and let $j\geq 3$. Then $$H_1 C = H_0 (\Delta E_0)^{j-2} \Delta H_0 C \quad \text{ and } \quad E_1C = H_0(\Delta E_0)^{j-2} \Delta' H_0 C$$ are shift-maximal. Further, $H_1$ has the opposite parity as $H_0$ and $E_1$ has the opposite parity as $E_0$. 
\end{lemma}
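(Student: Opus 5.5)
The parity claims are a counting argument; shift-maximality is a case analysis over every shift of $H_1C$ and $E_1C$, using only that $H_0C$ and $E_0C$ are shift-maximal and differ in exactly one position.

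\textbf{Parity.} The number of $1$s in $H_1$ is
\[
2\,\#_1(H_0) + (j-2)\bigl(\Delta + \#_1(E_0)\bigr) + \Delta .
\]
Since $E_0$ and $H_0$ differ in exactly one symbol, $E_0$ has the opposite parity to $H_0$.
\begin{itemize}
\item If $H_0$ is even, then $\Delta = 1$ and $E_0$ is odd. Each block $\Delta E_0$ then contributes an even count, so the total is even plus one, which is odd.
\item If $H_0$ is odd, then $\Delta = 0$ and $E_0$ is even. The total is even.
\end{itemize}
In both cases $H_1$ has the opposite parity to $H_0$. The words $E_1$ and $H_1$ differ only in the single symbol $\Delta$ versus $\Delta'$. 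Hence $E_1$ has the same parity as $H_0$, which is the opposite of $E_0$.

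\textbf{Preliminary observations.}
\begin{itemize}
\item \emph{Choice of $\Delta$.} $\Delta$ is the symbol for which $H_0\Delta \succ H_0C$ in the parity-lexicographic order. When $H_0$ is even the order at that position is $0\prec C\prec 1$, and $\Delta = 1$. When $H_0$ is odd the order is reversed, and $\Delta = 0$. In particular $H_0 \Delta' \prec H_0 C$. This is exactly why $E_1$ is the ``smaller'' companion of $H_1$.
\item \emph{Comparing $H_0$ and $E_0$.} Let $p$ be the position where they differ, with common prefix $w$. Because $wH_{0,p}\cdots C$ and $wE_{0,p}\cdots C$ are both admissible, I will show (using shift-maximality of both words) that $E_0 \prec H_0$ in the parity order, so that $H_0$ is the maximal block. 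If this fails, I will handle the two possible orderings separately.
\end{itemize}

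\textbf{Case analysis.} Write $H_1C$ as the concatenation of blocks $H_0,\ \Delta E_0,\ \dots,\ \Delta E_0,\ \Delta H_0,\ C$. I compare $\sigma^m(H_1C)$ with $H_1C$ according to where position $m$ falls.
\begin{enumerate}
\item \emph{$m$ at the start of a $\Delta$.} The shifted word begins with $\Delta$, while $H_1$ begins with $1$. If $\Delta = 0$ we are done immediately. If $\Delta = 1$, compare the following block ($E_0$ or $H_0$) against the continuation of $H_0$ inside $H_1$.
\item \emph{$m$ at the start of an $E_0$ block.} $E_0$ is compared with the first $H_0$. Their first difference is at $p$, which lies inside both words, and the parity before $p$ agrees. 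So $E_0 \prec H_0$ settles the comparison.
\item \emph{$m$ at the start of the final $H_0$.} The shifted word is $H_0C$, compared against $H_0\Delta\cdots$. Since $H_0 C \prec H_0\Delta$, this case is settled.
\item \emph{$m$ strictly inside an $H_0$ or $E_0$ block.} Let $s$ be the proper suffix. Shift-maximality of $H_0C$ (or $E_0C$) gives $sC \preceq$ the corresponding prefix of $H_0C$.
 \begin{itemize}
 \item If the inequality is strict at a position inside $s$, then $s$ already differs from the prefix of $H_1$, and we are done.
 \item Otherwise $s$ is a prefix of $H_0$, and the decision falls at the symbol after $s$. In $\sigma^m(H_1C)$ that symbol is $\Delta$, $\Delta'$ or $C$. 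In $H_1$ it is the next symbol $a$ of $H_0$, and we know $C \prec a$ relative to the parity of $s$.
 \end{itemize}
\end{enumerate}

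\textbf{Main obstacle.} The hard subcase is the last one: I must show $\Delta \preceq a$ as well, and when $\Delta = a$, continue comparing into the next block. Here I expect the star-product argument of Milnor--Thurston, as used in \cite{Alvin_strange_star}, to do the work. If $s$ is a prefix of $H_0$ that is also a suffix of $H_0$ and $sC$ is maximal, then the symbol following $s$ inside $H_0$ must be the ``large'' choice relative to the parity of $s$. The parity of $s$ is linked to the parity of $H_0$ through the complementary prefix. So I would first prove a small auxiliary claim: if $s$ is both a proper prefix and a proper suffix of $H_0$, then $H_0 = s\,a\cdots$ with $a$ the large choice. This forces $a = \Delta$ or $a \succ \Delta$, and the comparison then reduces to a shorter overlap, which I would close by induction on the length of the overlap.

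\textbf{Straddling shifts and $E_1C$.} For windows of $\sigma^m(H_1C)$ that straddle into the final $\Delta H_0 C$, the condition $j \ge 3$ ensures there is at least one $\Delta E_0$ block. That block breaks any long coincidence with $H_1$ before the final $C$ is reached. The word $E_1C$ is handled identically. The only change is the last separator $\Delta'$, and it makes every comparison that reaches it at least as favourable, because $H_0\Delta' \prec H_0\Delta$.
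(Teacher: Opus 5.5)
\textbf{Verdict.} Your parity count is correct, and the block-aligned shifts (your cases 1--3) are handled correctly \emph{provided} $E_0\prec H_0$. There are, however, two genuine gaps: the ordering $E_0\prec H_0$ cannot be derived, and the main obstacle is not carried out.

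\textbf{The ordering $E_0\prec H_0$ is a hypothesis, not a consequence.} It does not follow from the shift-maximality of $H_0C$ and $E_0C$. No separate treatment of the other ordering can succeed either, because the lemma is false in that case. Take $H_0=100111$ and $E_0=100011$. Both $H_0C$ and $E_0C$ are shift-maximal, and they differ only in position $4$. But $E_0\succ H_0$, since the common prefix $100$ is odd. Here $\Delta=1$, and for any $j\ge 3$,
\[
\sigma^{7}(H_1C)=100011\cdots\succ 100111\cdots=H_1C .
\]
So $E_0C\prec H_0C$ has to be imposed as an extra hypothesis. It is implicit in the paper and holds in its examples, e.g.\ $H_0=100011$, $E_0=100111$.

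\textbf{The hard case is not proved, and the proposed induction misdescribes it.} Let $m$ lie inside the first $H_0$, with $H_0=us=sat$ and $s$ a prefix of $H_0$.
\begin{itemize}
\item Your auxiliary claim is just shift-maximality of $H_0C$ at shift $|u|$. It shows that $a$ is the large symbol for the parity of $s$. Hence $sa$ is always odd, and $\Delta=a$ exactly when $u$ is even.
\item When $\Delta=a$, what you must prove is $t\Delta E_0\cdots\preceq E_0\Delta\cdots$. That is, a suffix of $H_0$ followed by $\Delta$ must lie below the block $E_0$, not below $H_0$. This is not a shorter instance of the same overlap problem.
\item Shift-maximality of $H_0C$ only gives $tC\preceq H_0C$. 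If $t$ agreed with $H_0$ through the position $p$ where $H_0$ and $E_0$ differ, you would get $E_0\prec t$, and $H_1C$ would not be shift-maximal.
\item Ruling this out needs the shift-maximality of $E_0C$, applied at the shift $|s|+1$. Your sketch never uses this in the hard case.
\item The analogous analysis for shifts starting inside an $E_0$-block, where $p$ can fall inside the suffix, is also missing. So is any actual argument for the ``straddling'' shifts.
\end{itemize}

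\textbf{The $E_1C$ step is not justified either.} The inequality $H_0\Delta'\prec H_0\Delta$ only says that $\Delta'$ is the smaller symbol when the common prefix before the separator has the parity of $H_0$. When that prefix has the other parity, $\Delta'$ is the larger symbol. Then a comparison that $\Delta$ decided in your favour does not stay decided: $\Delta'$ now equals the corresponding symbol of $E_1C$. The comparison therefore runs on into the final $H_0C$, and that continuation needs its own argument.

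The paper avoids all of this by citing the case analysis of \cite{Alvin_strange_star}. A self-contained proof along your lines has to supply that analysis, and it must assume $E_0\prec H_0$.
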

\begin{proof}
    % \sout{The proof follows from small modifications to the proofs of Propositions 3.1 and 3.2 in \cite{Alvin_strange_star}, which showed that $H_0 (\Delta E_0)^{j-2}\Delta E_0 C$ and $H_0 (\Delta  E_0)^{j-2}\Delta' E_0 C$ are both shift-maximal whenever $j\geq 2$. Because $E_0$ and $H_0$ differ in only one position, we would have similar cases to consider as in \cite{Alvin_strange_star}, but with only one symbol changed.}

    The proof is identical to the proofs of Propositions 3.1 and 3.2 in \cite{Alvin_strange_star}, with \(E_0\) replaced by \(H_0\) in the terminal block. Since \(H_0\) and \(E_0\) differ in exactly one symbol, the parity-lexicographic comparisons are unchanged except at that position, where the choice of \(\Delta\) and \(\Delta'\) gives the required inequality.
\end{proof}

\begin{definition}
Let \(\Delta_n=1\) if \(H_{n-1}\) has even parity and \(\Delta_n=0\) otherwise, and set \(\Delta_n'=1-\Delta_n\).
Given the kneading odometer triple $(H_0,E_0,\gamma)$, we may define an infinite kneading sequence 
$$H_\infty = \lim_{n\to \infty} H_n C$$
where $H_n = H_{n-1}(\Delta_n E_{n-1})^{j_n-2}\Delta_n H_{n-1}$ and $E_n = H_{n-1}(\Delta_n E_{n-1})^{j_n-2}\Delta_n'H_{n-1}$ for all $n\geq 1$.
\end{definition}

\begin{comment}
\begin{theorem}\label{Theorem: infinite strange star}
  The infinite kneading sequence $H_\infty$ is the kneading sequence of a non-infinitely renormalizable map $f$ where $f|_{\omega(c)}$ is conjugate to the odometer given by $\gamma' = (|H_0|+1, j_1,j_2,\ldots)$; moreover, if $H_0C$ belongs to a non-renormalizable map, then $f$ is non-renormalizable.
 \end{theorem}
 \begin{proof}
 By repeated application of Lemma~\ref{Lemma: star product is shift-maximal}, it follows that $H_\infty$ is a shift-maximal sequence. Because $H_\infty$ is constructed to be Toeplitz and have the finite time containment property, it follows that $f|_{\omega(c)}$ is conjugate to an odometer \cite{Alvin_ftcp}. Because of the Toeplitz structure of $H_\infty$, it is easy to check that the underlying odometer is generated by $\gamma'$. Lastly, by applying a similar argument as in \cite[Theorem 3.9]{Alvin_strange_star}, we can show that $H_\infty$ does not belong to an infinitely renormalizable map; further $f$ is non-renormalizable whenever $H_0C$ belongs to a non-renormalizable map.
\end{proof}

\Jernej{What if we replace the above theorem and its proof with the following}
\end{comment}

\begin{theorem}\label{Theorem: infinite strange star} The sequence \(H_\infty\) is shift maximal. Hence there exists a unimodal map
\(f\) such that
$\mathcal K(f)=H_\infty$.
Moreover, \(f|_{\omega(c)}\) is conjugate to the odometer generated by
\[
\gamma'=(|H_0|+1,j_1,j_2,\ldots).
\]
The map \(f\) is not infinitely renormalizable. Furthermore, if \(H_0C\) is the
kneading sequence of a non-renormalizable unimodal map, then \(f\) is
non-renormalizable.
\end{theorem}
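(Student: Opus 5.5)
The plan has four steps: shift-maximality of $H_\infty$, the odometer conjugacy, non-infinite-renormalizability, and non-renormalizability.

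\textbf{Shift-maximality.} By Lemma~\ref{Lemma: star product is shift-maximal} and induction on $n$, every finite word $H_nC$ is shift-maximal, and so is $E_nC$. The induction applies at each stage because of three facts.
\begin{itemize}
\item $H_n$ and $E_n$ differ in exactly one position, namely the symbol $\Delta_{n+1}$ versus $\Delta_{n+1}'$.
\item The parities of $H_n$ and $E_n$ alternate in the way the lemma records.
\item Each $j_{n+1}\ge 3$.
\end{itemize}
Hence $(H_n,E_n,(j_{n+1},j_{n+2},\ldots))$ is again a kneading odometer triple. Since $H_n$ is a prefix of $H_{n+1}$, the limit $H_\infty$ exists.

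Now suppose $\sigma^k(H_\infty)\succ H_\infty$ for some $k$. The two sequences first differ at some finite position $m$. Choose $n$ with $|H_n|>k+m$. Then $\sigma^k(H_nC)$ agrees with $\sigma^k(H_\infty)$ through position $m$, which gives $\sigma^k(H_nC)\succ H_nC$, contradicting shift-maximality of $H_nC$. One boundary case needs care: the first difference could fall on the $C$ symbol. Enlarging $n$ so that $C$ lies beyond position $k+m$ removes this case. By the realization statement in the preliminaries, there is then a unimodal $f$ with $\mathcal K(f)=H_\infty$.

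\textbf{Odometer.} Set $p_0=|H_0|+1$ and $p_n=|H_n|+1=j_n p_{n-1}$. The recursion writes $H_nC$ as $j_n$ blocks of length $p_{n-1}$. Each block is $H_{n-1}$ or $E_{n-1}$ followed by a separator symbol. The blocks $H_{n-1}$ and $E_{n-1}$ agree except at one site, which is itself a separator position of the previous level.

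It follows by induction that $H_\infty$ is a Toeplitz sequence with period structure $(p_n)$. Every coordinate outside a set of density $\to 0$ is periodic with period $p_n$. Thus $c$ is regularly recurrent, and $f|_{\omega(c)}$ is minimal with every point regularly recurrent. To conclude conjugacy to the odometer $\gamma'$, I would invoke the finite time containment criterion of \cite{Alvin_ftcp}, or equivalently the characterization in \cite{BlockKeesling2004}. This requires checking that $f|_{\omega(c)}$ is injective, which I would do via Theorem~\ref{Theorem:Alvin_homeo}. The key point is that two distinct $n$-th preimages of $\mathcal K(f)$ in $X_f$ arise only from the $H$/$E$ ambiguity. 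They differ in exactly one coordinate, at the separator. This bookkeeping is the main obstacle. I would first show that every word of length $p_n$ in $X_f$ aligned on the $p_n$-grid is determined up to that single site. The periods $p_n$ then identify the factor maps $\omega(c)\to\mathbb Z/p_n$, which assemble to the $\gamma'$-odometer.

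\textbf{Renormalizability.} If $f$ were infinitely renormalizable, $\mathcal K(f)$ would be an infinite $*$-product in the Derrida--Gervois--Pomeau sense. In particular, the return periods of $c$ would be cutting times $S_k$ with $\mathcal K(f)$ built from a block $B$ and its parity-adjusted copy at every level. Following the argument of \cite[Theorem 3.9]{Alvin_strange_star}, I would show that the blocks $\Delta_nE_{n-1}$ appearing in the recursion break this structure. A renormalization of period $p$ forces $e_{p\ell+i}=e_i$ for all $i<p$, a rigidity the single-site $H/E$ discrepancies violate at every scale beyond the first.

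Similarly, the only candidate period for a renormalization of $f$ is one already visible in $H_0C$. So if $H_0C$ is non-renormalizable, $f$ is non-renormalizable.
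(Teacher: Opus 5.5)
Your shift-maximality argument is the paper's: induction through Lemma~\ref{Lemma: star product is shift-maximal}, then a finite-comparison limit. One small slip: $H_n$ and $E_n$ differ at $\Delta_n$ versus $\Delta_n'$, not $\Delta_{n+1}$. Your renormalization step defers to \cite[Theorem~3.9]{Alvin_strange_star}, exactly as the paper does.

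\textbf{The gap is in the odometer step.} There are two problems.
\begin{enumerate}
\item The Toeplitz structure gives regular recurrence of $c$ and minimality of $\omega(c)$. It does not give that \emph{every} point of $\omega(c)$ is regularly recurrent. By the characterization in \cite{BlockKeesling2004}, that statement is equivalent to the conjugacy you are trying to prove, so you cannot assert it at that stage.
\item Injectivity of $f|_{\omega(c)}$ is the wrong target. The map of Example~\ref{Example: reg rec homeo} has a Toeplitz kneading sequence with periods $5\cdot 4^{i-1}$. There $f|_{\omega(c)}$ is a minimal homeomorphism, proved via Theorem~\ref{Theorem:Alvin_homeo} just as you propose. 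Your sentence that the periods give factor maps which ``assemble to the odometer'' applies verbatim. Yet Theorem~\ref{thm: regularly recurrent minimal not odometer} shows that map is not conjugate to an odometer.
\end{enumerate}
Assembling the maps to $\mathbb Z/p_n$ only gives a factor map $\omega(c)\to\Delta_{\gamma'}$. What must be shown is that this factor map is injective. That is the content of the finite time containment property, which the paper verifies as in \cite{Alvin_strange_star} and then feeds into \cite{Alvin_ftcp}.

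\textbf{How to close it with your own bookkeeping.} Your grid argument can be redirected to prove exactly this injectivity.
\begin{enumerate}
\item Every point of $X_f$ decomposes uniquely, by aperiodicity, into level-$n$ blocks $H_ns$ or $E_ns$ of length $p_n$. So the holes of the $p_n$-skeleton lie only in the classes $0$ and $(j_n-1)p_{n-1}$ modulo $p_n$, and both are divisible by $p_{n-1}$.
\item Two points of $X_f$ with the same image in $\Delta_{\gamma'}$ share all block alignments. Hence they can differ only at coordinates that are holes at every level.
\item Any two such coordinates are congruent modulo every $p_{n-1}$, so there is at most one.
\item That coordinate ends a level-$(n-1)$ block for every $n$. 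So the tail after it begins with a level-$n$ block for every $n$, hence with $H_n$ for every $n$ (every level-$(n+1)$ block starts with $H_n$), hence equals $H_\infty$.
\item The two points are therefore $a0\mathcal K(f)$ and $a1\mathcal K(f)$, the two itineraries of a single point of $\omega(c)$ mapped onto $c$. So the factor map is injective.
\end{enumerate}
This is precisely what fails in Example~\ref{Example: reg rec homeo}. There the persistent hole class $3+\sum_{k<i}p_k \pmod{p_i}$ is not a multiple of $p_{i-1}$, and it produces a second ambiguous coordinate.
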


\begin{proof}
By construction, \(H_nC\) and \(E_nC\) differ in precisely one position for every
\(n\). Hence Lemma~\ref{Lemma: star product is shift-maximal} applies
inductively, and each finite word \(H_nC\) is shift-maximal. Since shift-maximality is determined by finite parity-lexicographic comparisons and the
words \(H_nC\) converge prefixwise to \(H_\infty\), the limit \(H_\infty\) is
shift-maximal. Therefore \(H_\infty\) is realized as the kneading sequence of a
unimodal map.

The recursive construction gives a Toeplitz sequence. Moreover, the same
argument as in \cite{Alvin_strange_star} shows that \(H_\infty\) has the finite
time containment property. It follows from \cite{Alvin_ftcp} that
\(f|_{\omega(c)}\) is conjugate to an odometer. The periods of the Toeplitz
skeleton are
\[
|H_0|+1,\quad (|H_0|+1)j_1,\quad (|H_0|+1)j_1j_2,\quad \ldots,
\]
so the associated odometer is generated by
\[
\gamma'=(|H_0|+1,j_1,j_2,\ldots).
\]
Finally, by the argument of \cite[Theorem~3.9]{Alvin_strange_star},
\(H_\infty\) does not correspond to an infinitely renormalizable map. The same
argument from \cite[Theorem~3.9]{Alvin_strange_star} shows that, if \(H_0C\) is the kneading sequence of a non-renormalizable
unimodal map, then the map with kneading sequence \(H_\infty\) is
non-renormalizable.
\end{proof}

\begin{comment}
\begin{remark}\label{Remark: cutting times H}
    Let $(H_0,E_0,\gamma)$ be a kneading odometer triple. Let $|H_{n}C| = T_{n} = (|H_0|+1)\cdot j_1\cdots j_{n}$. Then $T_{n}$ is a cutting time for $H_\infty$ for each $n\in \N$. Additionally, the cutting times in $H_\infty$  that are between $T_{n-1}$ and $T_{n}$ are  $T_{n-1}+ (j_{n-2}-1)T_{n-2}$ , $2T_{n-1}$, $2T_{n-1}+ (j_{n-2}-1)T_{n-2}$, $3T_{n-1}$, $3T_{n-1}+ (j_{n-2}-1)T_{n-2}$, $\ldots$ , $(j_{n-1}-1)T_{n-1}$, and $T_n$.
\end{remark}

\Jernej{What if we extend the previous remark to lemma below?}
\end{comment}

\begin{lemma}\label{Remark: cutting times H}
Let \((H_0,E_0,\gamma)\) be a kneading odometer triple, and set
\[
T_n:=|H_nC|=(|H_0|+1)j_1j_2\cdots j_n .
\]
Then \(T_n\) is a cutting time for \(H_\infty\) for every \(n\ge0\). Writing
\(T_n=S_{t_n}\), we have
\[
t_n=t_{n-1}+2j_n-3
\qquad\text{for all } n\ge1.
\]
Moreover, for \(n\ge2\), the cutting times between \(T_{n-1}\) and \(T_n\), together with \(T_n\), are
\[
qT_{n-1}+(T_{n-1}-T_{n-2})
\quad\text{and}\quad
(q+1)T_{n-1},
\qquad 1\le q\le j_n-2,
\]
together with \(T_n=j_nT_{n-1}\).
\end{lemma}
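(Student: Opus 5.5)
I will work entirely with the characterization of cutting times through the return function: $S_0=1$ (using the paper's indexing convention where the first cutting time is $1$) and $S_{i+1}=S_i+\RR(S_i)$, where $\RR(m)=\min\{j\ge1: e_{m+j}\neq e_j\}$ is computed from the kneading sequence $H_\infty=e_1e_2\cdots$. The plan is an induction on $n$. The inductive hypothesis is that $T_{n-1}=|H_{n-1}C|$ is a cutting time, together with an explicit knowledge of $\RR(T_{n-1})$.

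The structural input is the following. By construction $H_\infty$ begins with $H_nC$ for every $n$, in the sense that the symbol in position $T_n$ of $H_\infty$ replaces $C$. The next block after $H_{n-1}$ inside $H_n$ is $\Delta_nE_{n-1}$ or $\Delta_nH_{n-1}$. Since $E_{n-1}$ and $H_{n-1}$ differ in exactly one position, comparing $\sigma^{T_{n-1}}(H_\infty)$ with $H_\infty$ locates the first disagreement. The prefix $H_{n-1}$ of $H_\infty$ is followed, in position $T_{n-1}$, by the symbol $\Delta_n$, and $\Delta_n$ is the symbol making $H_{n-1}\Delta_n$ "turn" correctly. So I will compute $\RR(qT_{n-1})$ for $1\le q\le j_n-1$ directly. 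The shifted sequence starts $E_{n-1}\dots$ or $H_{n-1}\dots$, and it disagrees with $H_\infty=H_{n-1}\cdots$ at the unique position $p_{n-1}$ where $E_{n-1}$ and $H_{n-1}$ differ. Proving inductively that $p_{n}=T_{n}-T_{n-1}$ should give $\RR(qT_{n-1})=T_{n-1}-T_{n-2}$ for $1\le q\le j_n-2$. The reason is that $E_n$ differs from $H_n$ only in the final $\Delta_n'$, which sits at position $T_n-T_{n-1}$ within the block. This yields the next cutting time $qT_{n-1}+(T_{n-1}-T_{n-2})$. I will then compute $\RR$ at that time: the shifted sequence there starts with the tail $\Delta_{n-1}H_{n-2}$ of $E_{n-1}$ followed by $\Delta_nE_{n-1}$, and I expect $\RR=T_{n-2}$ to land exactly on $(q+1)T_{n-1}$. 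For $q=j_n-1$, the block following is $\Delta_nH_{n-1}$ (not $E$), so there is no early disagreement inside that block. The agreement persists past $T_{n-1}$, and the next cutting time is $j_nT_{n-1}=T_n$. This closes the induction, and the case $n=1$ is handled by direct inspection.

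The count then follows. Besides $T_{n-1}$ itself, there are $j_n-2$ values of $q$ each contributing two cutting times, plus $T_n$, giving $2(j_n-2)+1=2j_n-3$ new cutting times. Hence $t_n=t_{n-1}+2j_n-3$.

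The main obstacle is verifying that no extra cutting time appears. This amounts to computing $\RR$ precisely at the intermediate times $qT_{n-1}+(T_{n-1}-T_{n-2})$. There one must compare a tail of $H_{n-2}$ concatenated with $\Delta_n E_{n-1}$ against $H_\infty$, and show that the first disagreement occurs exactly after $T_{n-2}$ symbols. Here I would use that $H_{n-1}$ itself begins with $H_{n-2}(\Delta_{n-1}E_{n-2})\cdots$ and that $H_{n-2}\Delta_n$ versus $H_{n-2}\Delta_{n-1}$ (or $\Delta'$) produces the disagreement. This requires carefully tracking the parities $\Delta_n$ via Lemma~\ref{Lemma: star product is shift-maximal} (parity of $H_n$ alternates, so $\Delta_n$ alternates). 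This is the reason the statement is restricted to $n\ge2$, since $T_{n-2}$ must exist.
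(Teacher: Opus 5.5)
Your inductive step for $n\ge 2$ is essentially sound, and more explicit than the paper's proof, which only asserts the list of positions. The genuine gap is the base case: $n=1$ cannot be ``handled by direct inspection'', because for a general triple the identity $t_1=t_0+2j_1-3$ is false.

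Your step produces exactly two cutting times per $\Delta_nE_{n-1}$-block only because $E_{n-1}$ and $H_{n-1}$ differ at $T_{n-1}-T_{n-2}=(j_{n-1}-1)T_{n-2}$. That position is the \emph{last} cutting time before $T_{n-1}$, and this uses $n-1\ge 1$. At level $0$, $H_0$ and $E_0$ differ at a position $p_0=\RR(T_0)=S_r<T_0$ that is otherwise arbitrary. Since $E_0$ agrees with $H_0$ after $p_0$, the cutting times in $(qT_0,(q+1)T_0]$, $1\le q\le j_1-2$, are $qT_0+p_0$ followed by $qT_0+u$ for the cutting times $u\in(p_0,T_0]$. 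Hence $t_1=t_0+(j_1-2)(t_0-r+1)+1$, which equals $t_0+2j_1-3$ only if $r=t_0-1$.

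The paper's own Example~\ref{Example: absorbing Cantor set} ($H_0=100011$, $E_0=100111$, $j_1=4$) shows the failure. Its cutting times are $1,2,3,4,6,7$ and then $11,13,14,18,20,21,28$. So $t_0=5$, $t_1=12$ and $t_1-t_0=7\neq 5$, consistent with the kneading map listed there. The recursion for $t_n$ therefore holds only for $n\ge 2$. The paper's proof does not treat $n=1$ separately either. The defect is harmless for Theorem~\ref{Theorem: strange star product satisfies bruin conditions}, which only concerns large $k$.

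What your base case actually has to deliver is that $T_0$ and $T_1$ are cutting times, and this needs an argument.
For $T_0$: it is a cutting time of the shift-maximal word $H_0C$. The word $e_1\cdots e_{S_k}$ has odd parity for every cutting time $S_k$ (induct using $e_{S_k}\neq e_{S_{Q(k)}}$). A parity count, using that $s(T_0)$ and $T_0-s(T_0)$ are cutting times, then shows that $\Delta_1$ (chosen to make $H_0\Delta_1$ odd) is exactly the symbol disagreeing with $e_{T_0-s(T_0)}$. So $T_0$ remains a cutting time of $H_\infty$.
For $T_1$: the block description above, together with $\Delta_2\neq\Delta_1$, gives $\RR((j_1-1)T_0)=T_0$, so $T_1$ is a cutting time.

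There are also two bookkeeping corrections in your step:
\begin{enumerate}
\item At $m=qT_{n-1}+(T_{n-1}-T_{n-2})$, the symbol in position $m$ is $\Delta_{n-1}'$. So $\sigma^m(H_\infty)$ begins with all of $H_{n-2}$ followed by $\Delta_n$, and the first disagreement with $H_{n-2}\Delta_{n-1}$ is at the $T_{n-2}$-th symbol.
\item At $(j_n-1)T_{n-1}$, the agreement must \emph{not} persist past $T_{n-1}$. Here $\sigma^{(j_n-1)T_{n-1}}(H_\infty)$ begins with $H_{n-1}\Delta_{n+1}$, and $\Delta_{n+1}\neq\Delta_n$ gives $\RR((j_n-1)T_{n-1})=T_{n-1}$. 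This is exactly what makes $T_n$ a cutting time.
\end{enumerate}
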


\begin{proof}

By construction,
\(
H_nC
=
H_{n-1}(\Delta_nE_{n-1})^{j_n-2}\Delta_nH_{n-1}C .
\)
Since \(|H_{n-1}C|=|E_{n-1}C|=T_{n-1}\), we have
\(
|H_{n-1}|=|E_{n-1}|=T_{n-1}-1.
\)
Thus
\[
|H_nC|
=
(T_{n-1}-1)+(j_n-2)T_{n-1}+(T_{n-1}+1)
=
j_nT_{n-1}.
\]
Hence
\(
T_n=(|H_0|+1)j_1\cdots j_n.
\)
Since \(H_nC\) is an initial block of \(H_\infty\) ending in \(C\), the time
\(T_n\) is a cutting time.

The decomposition of \(H_nC\) shows that, between \(T_{n-1}\) and \(T_n\), new
cutting times occur immediately after each repeated \(\Delta_nE_{n-1}\)-block and
at the corresponding internal positions determined from the previous stage. These
positions are precisely
\[
qT_{n-1}+(T_{n-1}-T_{n-2})
\quad\text{and}\quad
(q+1)T_{n-1},
\qquad 1\le q\le j_n-2,
\]
followed by \(T_n=j_nT_{n-1}\). Thus the number of new cutting times from
\(T_{n-1}\) to \(T_n\) is
\(
2(j_n-2)+1=2j_n-3.
\)
Therefore, if \(T_n=S_{t_n}\), then
\(
t_n=t_{n-1}+2j_n-3.
\)
\end{proof}

\begin{theorem}\label{Theorem: strange star product satisfies bruin conditions}
    Let $(H_0,E_0,\gamma)$ be a kneading odometer triple and let $H_\infty$ be the generated kneading sequence. If there exists some $M\in \N$ such that each $j_i$ from $\gamma$ is such that $j_i\leq M$, then there exist $k_0,N\in \N$ such that for all $k\geq k_0$ 
    \begin{itemize}
        \item $Q(k) \geq k-N$ and
        \item $Q(k+1) > Q^2(k) + 1$.
    \end{itemize}
\end{theorem}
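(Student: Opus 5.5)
We would compute the kneading map $Q$ explicitly on the block of indices between consecutive $t_{n-1}$ and $t_n$, using Lemma~\ref{Remark: cutting times H}, and then read off both inequalities. For $n\ge 2$ and $1\le q\le j_n-2$, set $a_q:=qT_{n-1}+(T_{n-1}-T_{n-2})$ and $b_q:=(q+1)T_{n-1}$. By that lemma, the cutting times from $T_{n-1}=S_{t_{n-1}}$ to $T_n=S_{t_n}$ are
\[
T_{n-1}<a_1<b_1<a_2<b_2<\cdots<a_{j_n-2}<b_{j_n-2}<T_n .
\]
The consecutive differences are then
\[
a_1-T_{n-1}=T_{n-1}-T_{n-2},\qquad b_q-a_q=T_{n-2},\qquad a_{q+1}-b_q=T_{n-1}-T_{n-2},\qquad T_n-b_{j_n-2}=T_{n-1}.
\]
Since $T_{n-2}=S_{t_{n-2}}$ and $T_{n-1}=S_{t_{n-1}}$, we have $Q=t_{n-2}$ at the indices of the $b_q$ and $Q=t_{n-1}$ at $t_n$. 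The remaining difference $T_{n-1}-T_{n-2}$ must itself be a cutting time. Applying the same lemma one level down, the last cutting time before $T_{n-1}$ is $b_{j_{n-1}-2}^{(n-1)}=(j_{n-1}-1)T_{n-2}$, so $T_{n-1}-(j_{n-1}-1)T_{n-2}=T_{n-2}$. Hence $T_{n-1}-T_{n-2}$ is the cutting time $a^{(n-1)}_{j_{n-1}-2}$, namely $(j_{n-1}-2)T_{n-2}+(T_{n-2}-T_{n-3})$. We should double-check this identity (for $j_{n-1}=3$ it reads $T_{n-1}-T_{n-2}=2T_{n-2}-T_{n-3}$, which needs to be a listed cutting time). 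Its index is $t_{n-1}-2$. Thus on the block $k\in(t_{n-1},t_n]$ the values of $Q$ alternate as
\[
t_{n-1}-2,\ t_{n-2},\ t_{n-1}-2,\ t_{n-2},\ \ldots,\ t_{n-2},\ t_{n-1}.
\]

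For the first bullet, the block has length $2j_n-3\le 2M-3$, and $t_{n-1}-t_{n-2}=2j_{n-1}-3\le 2M-3$. So for $k\in(t_{n-1},t_n]$ every value of $Q(k)$ is at least $t_{n-2}\ge k-(4M-6)$. Taking $N=4M$ and $k_0=t_2$ (or larger, so that the finitely many initial irregularities coming from $H_0$ and the first stages are excluded) gives $Q(k)\ge k-N$.

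For the second bullet, we compute $Q^2(k)$ in each position type. If $Q(k)=t_{n-2}$, then $Q^2(k)=Q(t_{n-2})=t_{n-3}$. If $Q(k)=t_{n-1}-2$, then $t_{n-1}-2$ is the index of some $a$ in the previous block (when $j_{n-1}\ge3$), so $Q^2(k)=t_{n-3}-2$. If $Q(k)=t_{n-1}$ (that is, $k=t_n$), then $Q^2(k)=t_{n-2}$. Now we check $Q(k+1)$ against these values using the alternating pattern:
\begin{itemize}
\item after an $a$-index comes a $b$-index, giving $Q(k+1)=t_{n-2}>t_{n-3}-2+1$;
\item after a $b$-index comes an $a$-index or $t_n$, giving $Q(k+1)\in\{t_{n-1}-2,\,t_{n-1}\}>t_{n-3}+1$;
\item after $k=t_n$ comes the first index of the next block, giving $Q(t_n+1)=t_n-2>t_{n-2}+1$.
\end{itemize}
Each inequality follows because $t_m-t_{m-1}=2j_m-3\ge3$.

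The main obstacle is the identification $T_{n-1}-T_{n-2}=S_{t_{n-1}-2}$, together with the base cases for small $n$, where the stage-$0$ structure of $H_0$ enters. We would handle these by proving the index formula for all $n\ge3$ directly from Lemma~\ref{Remark: cutting times H}, and by absorbing everything before $t_3$ into the choice of $k_0$.
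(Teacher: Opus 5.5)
Your strategy is exactly the paper's: read off $Q$ blockwise from Lemma~\ref{Remark: cutting times H}, then check both inequalities case by case. However, the identity you flag as the main obstacle is false, so the $Q$-values you compute at the $a$-indices are wrong.

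Since $T_{n-1}=j_{n-1}T_{n-2}$, one has $T_{n-1}-T_{n-2}=(j_{n-1}-1)T_{n-2}=b^{(n-1)}_{j_{n-1}-2}$. This is the last cutting time before $T_{n-1}$, i.e.\ $T_{n-1}-T_{n-2}=S_{t_{n-1}-1}$. You essentially derived this yourself, writing $T_{n-1}-(j_{n-1}-1)T_{n-2}=T_{n-2}$, and then drew the wrong conclusion. In fact $a^{(n-1)}_{j_{n-1}-2}=(j_{n-1}-1)T_{n-2}-T_{n-3}$ is smaller by $T_{n-3}$. Your own test case $j_{n-1}=3$ exposes this, since there $T_{n-1}-T_{n-2}=2T_{n-2}\neq 2T_{n-2}-T_{n-3}$. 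Concretely, in Example~\ref{Example: absorbing Cantor set} one has $T_1=28$, $T_2=112=S_{17}$ and $T_2-T_1=84=S_{16}$, so $Q(18)=16=t_2-1$, not $15$. Separately, your value $Q^2(k)=t_{n-3}-2$ at $a$-indices is inconsistent even with your own table, which would give $t_{n-2}-2$.

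The repair is immediate and yields exactly the paper's table:
\begin{itemize}
\item $Q=t_{n-1}-1$ at $k=t_{n-1}+2q-1$;
\item $Q=t_{n-2}$ at $k=t_{n-1}+2q$;
\item $Q(t_n)=t_{n-1}$.
\end{itemize}
At an $a$-index, $Q^2(k)=Q(t_{n-1}-1)=t_{n-3}$, because $t_{n-1}-1$ is the last $b$-index of the previous block. Likewise $Q(t_n+1)=t_n-1$.

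Both conclusions survive the correction. For the first bullet, the minimum of $Q$ over $(t_{n-1},t_n]$ is still $t_{n-2}$, since $t_{n-1}-1\ge t_{n-2}$. So your bound $k-Q(k)\le 4M-6$ stands; the paper's finer count gives $4M-7$. For the second bullet, the three comparisons become
\begin{itemize}
\item $t_{n-2}>t_{n-3}+1$,
\item $t_{n-1}-1>t_{n-3}+1$,
\item $t_n-1>t_{n-2}+1$,
\end{itemize}
each following from $t_m-t_{m-1}=2j_m-3\ge 3$. Take $k_0$ large enough that every level invoked lies in the range where the lemma's block description applies. With this correction your argument coincides with the paper's proof.
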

\begin{comment}
\begin{proof}
 For ease, set $T_n = |H_nC| = S_{t_n}$ for all $n\in \N$. By Remark~\ref{Remark: cutting times H}, $t_{n+1} = t_n + 2j_n-3$ for all $n\in \N$. We have the following cutting times for all $n\geq 2$: $$Q(k) = \begin{cases}
     t_{n-1} & \text{ if } k = t_n, \\
     t_n - 1 & \text{ if } k = t_n + 2l-1 \text{ for } 1\leq l \leq j_n - 2, \\
     t_{n-1} & \text{ if } k = t_n + 2l \text{ for } 1\leq l \leq j_n - 2.
 \end{cases}$$

 We additionally have $Q^2(k) = t_{n-2}$ for all $k = t_n +j$ with $0\leq j \leq j_n - 2$.  Thus, it holds for all $k$ large enough, that $Q(k+1) > Q^2(k) + 1$.

 Lastly, consider the following values for $k - Q(k)$:
 $$k - Q(k) = \begin{cases}
     t_n-t_{n-1} = 2j_{n-1}-3 & \text{ if } k = t_n, \\
     t_n + (2l-1) - (t_n-1) = 2l & \text{ if } k = t_n + 2l-1 \text{ for } 1\leq l \leq j_n - 2, \\
     t_n + 2l - t_{n-1} = 2l + 2j_{n-1}-3 & \text{ if } k = t_n + 2l \text{ for } 1\leq l \leq j_n - 2.
 \end{cases}$$ Under the assumption that $j_n \leq M$ for all $n$, it follows that for all $k$ large enough, $k - Q(k) \leq 4M - 7$. Hence, let $N = 4M-7$.
\end{proof}

\Jernej{Given we extend the previous remark to lemma, we might need to extend the proof as below?}

\end{comment}

\begin{proof}

Set
\(
T_n=|H_nC|=S_{t_n}.
\)
By Lemma~\ref{Remark: cutting times H}, we have
\(
t_n=t_{n-1}+2j_n-3.
\)
Moreover, for \(n\geq2\), the cutting times between \(T_{n-1}\) and \(T_n\) are
\[
qT_{n-1}+(T_{n-1}-T_{n-2})
\quad\text{and}\quad
(q+1)T_{n-1},
\qquad 1\leq q\leq j_n-2,
\]
followed by \(T_n=j_nT_{n-1}\).

Using the relation
\(
S_k-S_{k-1}=S_{Q(k)},
\)
we obtain, for \(n\geq2\),
\[
Q(k)=
\begin{cases}
t_{n-1}-1, & \text{if } k=t_{n-1}+2q-1,\quad 1\leq q\leq j_n-2,\\
t_{n-2}, & \text{if } k=t_{n-1}+2q,\quad 1\leq q\leq j_n-2,\\
t_{n-1}, & \text{if } k=t_n.
\end{cases}
\]

Indeed, the corresponding differences between consecutive cutting times are
\(
T_{n-1}-T_{n-2}=S_{t_{n-1}-1},
\)
\(
T_{n-2}=S_{t_{n-2}},
\)
and
\(
T_{n-1}=S_{t_{n-1}},
\)
respectively.
We first prove that \(Q(k)\geq k-N\) for all sufficiently large \(k\).
Assume \(j_n\leq M\) for all \(n\). In the three cases above, we have
\[
k-Q(k)=
\begin{cases}
2q, & \text{if } k=t_{n-1}+2q-1,\\
(t_{n-1}-t_{n-2})+2q, & \text{if } k=t_{n-1}+2q,\\
t_n-t_{n-1}, & \text{if } k=t_n.
\end{cases}
\]
Since
\(
t_n-t_{n-1}=2j_n-3,
\)
it follows that
\(
k-Q(k)\leq 4M-7
\)
for all sufficiently large \(k\). Thus we may take
\(
N=4M-7.
\)

It remains to check that
\(
Q(k+1)>Q^2(k)+1
\)
for all sufficiently large \(k\). Let \(k\) lie in the block between
\(t_{n-1}\) and \(t_n\), with \(n\) sufficiently large.
If
\(
k=t_{n-1}+2q-1,
\)
then \(Q(k)=t_{n-1}-1\), while
\(
Q(k+1)=t_{n-2}.
\)
Moreover, \(Q^2(k)=Q(t_{n-1}-1)=t_{n-3}\). Hence
\(
Q(k+1)=t_{n-2}>t_{n-3}+1=Q^2(k)+1
\)
for all sufficiently large \(n\).
If
\(
k=t_{n-1}+2q,
\)
then \(Q(k)=t_{n-2}\), so \(Q^2(k)=t_{n-3}\). If \(k+1<t_n\), then
\(
Q(k+1)=t_{n-1}-1,
\)
and if \(k+1=t_n\), then
\(
Q(k+1)=t_{n-1}.
\)
In either case,
\(
Q(k+1)>t_{n-3}+1=Q^2(k)+1
\)
for all sufficiently large \(n\).
Finally, if \(k=t_n\), then
\(
Q(k)=t_{n-1}
\) and 
\(Q^2(k)=t_{n-2},
\)
while \(k+1=t_n+1\) is the first new index in the next block, so
\(
Q(k+1)=t_n-1.
\)
Thus
\(
Q(k+1)=t_n-1>t_{n-2}+1=Q^2(k)+1
\)
for all sufficiently large \(n\).

Therefore there exist \(k_0,N\in\mathbb N\) such that for all \(k\geq k_0\),
\(
Q(k)\geq k-N
\) and
\(
Q(k+1)>Q^2(k)+1.
\)
\end{proof}

\begin{corollary}\label{cor:Wild attractors dense}
    With the constraints as listed in Theorem~\ref{Theorem: strange star product satisfies bruin conditions}, whenever $f$ is a unimodal map with kneading sequence $H_\infty$, we can find an order $l_0 = l_0(N)$ such that if  the S-unimodal map $f$ has order $l\geq l_0$, then $f$ has an absorbing Cantor set.
\end{corollary}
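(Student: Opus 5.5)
The plan is to combine Theorem~\ref{Theorem: strange star product satisfies bruin conditions} with the sufficient condition of Bruin, \cite[Theorem 6.1]{Bruin_Absorbing_Cantor}. That theorem states, roughly, the following. Suppose an $S$-unimodal map of critical order $l$ is non-renormalizable, has no periodic attractor, and its kneading map satisfies two conditions for all large $k$:
\[
Q(k)\ge k-N \qquad\text{and}\qquad Q(k+1)>Q^2(k)+1 .
\]
Then there is $l_0=l_0(N)$ such that for $l\ge l_0$ the map has a wild (absorbing) Cantor attractor. The first condition is the bounded-gap Fibonacci-like condition (C1). The second is the condition ensuring that the critical orbit does not approach $c$ too fast, i.e.\ the kneading is ``non-degenerate''.

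The proof therefore consists of checking the hypotheses in order.

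First, by Theorem~\ref{Theorem: infinite strange star}, $H_\infty$ is shift-maximal and is realized as $\mathcal K(f)$ for a unimodal map. Within the topological conjugacy class we may choose $f$ to be $S$-unimodal of order $l$: take $x\mapsto a-|x|^l$ (rescaled) and use full-family/kneading theory to realize any admissible kneading sequence. Moreover, $f$ is not infinitely renormalizable. Choosing $H_0C$ to be the kneading sequence of a non-renormalizable map, $f$ is non-renormalizable.

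Second, since $\mathcal K(f)$ is infinite, aperiodic, and $c$ is recurrent (indeed $f|_{\omega(c)}$ is an odometer), there is no periodic attractor. An attracting cycle would attract $c$ and force an eventually periodic kneading sequence, contradicting the Toeplitz non-periodic structure.

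Third, Theorem~\ref{Theorem: strange star product satisfies bruin conditions} gives $k_0$ and $N=4M-7$ with $Q(k)\ge k-N$ and $Q(k+1)>Q^2(k)+1$ for $k\ge k_0$.

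We then invoke \cite[Theorem 6.1]{Bruin_Absorbing_Cantor} to get $l_0(N)$.

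The main obstacle is matching the hypotheses exactly to Bruin's theorem. His result may require the kneading conditions for all $k$ rather than for $k\ge k_0$, or may be stated for the specific family $|x|^l+c_1$. To address the first issue, we would note that Bruin's argument is asymptotic: it estimates returns to small neighbourhoods of $c$ deep in the tower. Finitely many initial values of $Q$ only affect constants, which can be absorbed by enlarging $l_0$ or by working with a suitable induced return domain. If needed, we would quote the precise form of that statement. Since $N$ depends only on $M$, the resulting $l_0$ is uniform over the whole family with $j_i\le M$.
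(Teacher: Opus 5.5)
Your proposal follows the paper's proof, which simply combines Theorem~\ref{Theorem: strange star product satisfies bruin conditions} with Bruin's sufficient condition \cite[Proposition 6.1]{Bruin_Absorbing_Cantor}. Your extra checks are correct and only make explicit what the paper leaves implicit: realizability by an $S$-unimodal map, non-renormalizability (which the paper takes as a standing assumption), and absence of periodic attractors because $c$ is recurrent and non-periodic. The one overreach is your closing claim that $l_0$ is uniform over all triples with $j_i\le M$. The statement does not need this, and if Bruin's bound had to hold for every $k$, you would have to replace $N$ by $\max\{N,k_0\}$ (using $Q\ge 0$), which depends on $H_0$.
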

\begin{proof}
    Combine together Theorem~\ref{Theorem: strange star product satisfies bruin conditions} and \cite[Proposition 6.1]{Bruin_Absorbing_Cantor}.
\end{proof}

\begin{remark}
    In Lemma~\ref{Lemma: star product is shift-maximal} and Theorem~\ref{Theorem: infinite strange star}, if we were to permit $j=j_n=2$ for all $n\in \N$, then $H_\infty = H_0\star \mathcal{K}(g)$ where $g$ is the Feigenbaum, or $2^\infty$, map (i.e., the unique map in the logistic family with periodic points of period $2^n$ for all $n$ and no other periodic points) and $\star$ represents the star product (see \cite{ColletEckmann}). In this case, the map $f$ with $\mathcal{K}(f) = H_\infty$ would be infinitely renormalizable. The turning point would be persistently recurrent; however $f$ would not have an absorbing Cantor set.
\end{remark}

\begin{corollary}\label{cor:Wild attractors odometer}
Let \(\gamma=(j_1,j_2,\ldots)\) be a sequence of integers with \(j_i\ge2\).
Assume that only finitely many primes divide the entries \(j_i\).
Then there exist uncountably many pairwise non-conjugate \(S\)-unimodal maps \(f\)
such that \(f|_{\omega(c)}\) is conjugate to the odometer generated by \(\gamma\)
and \(f\) has an absorbing Cantor set.
\end{corollary}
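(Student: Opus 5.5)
The plan is to realize the prescribed odometer through the construction of Theorem~\ref{Theorem: infinite strange star}, and to obtain uncountably many maps by varying the kneading odometer triple.

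\textbf{Step 1: reduce the sequence $\gamma$.} Two odometers are conjugate exactly when their supernatural numbers $\prod_i j_i$ agree, i.e.\ when every prime appears to the same (possibly infinite) exponent. Since only finitely many primes $p_1,\dots,p_r$ divide the $j_i$, the supernatural number of $\gamma$ is $\prod_{s} p_s^{a_s}$ with $a_s\in\N_0\cup\{\infty\}$, and at least one $a_s=\infty$. I would first regroup $\gamma$ into a sequence $\gamma^\ast=(m_0,j'_1,j'_2,\ldots)$ with the same supernatural number such that $3\le j'_i\le M$ for a uniform bound $M$ depending only on $p_1,\dots,p_r$. To do this, let $m_0$ absorb all finite exponents together with a few extra primes. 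The infinite-exponent primes are then grouped into blocks whose products lie in $[3,M]$; for example, use $p^2$ or $p\,q$ blocks, with $4=2\cdot 2$ when $p=2$. Boundedness is possible precisely because only finitely many primes occur.

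\textbf{Step 2: choose the seed words.} Next I need a kneading odometer triple $(H_0,E_0,\gamma^\ast_{\ge1})$ with $|H_0|+1=m_0$ (or a multiple of $m_0$ compatible with the supernatural number), such that $H_0C$ and $E_0C$ are shift-maximal, differ in one position, and $H_0C$ belongs to a non-renormalizable map. For uncountability I would not vary $H_0$. Instead I would use the freedom in the choice at each stage. Since the supernatural number is invariant under reordering and regrouping the infinite-exponent primes, there are uncountably many bounded sequences $\gamma^\ast$; for instance, choose at each step between the blocks $p^2$ and $p^3$ when $p$ has infinite exponent. This gives $2^{\aleph_0}$ distinct sequences, all generating the same odometer.

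\textbf{Step 3: apply the earlier results.} For each such choice, Theorem~\ref{Theorem: infinite strange star} yields a non-renormalizable unimodal map with $f|_{\omega(c)}$ conjugate to the odometer of $\gamma'=(|H_0|+1,j'_1,\ldots)$, which is the odometer of $\gamma$. Since $j'_i\le M$, Theorem~\ref{Theorem: strange star product satisfies bruin conditions} gives $N=4M-7$ uniformly. Corollary~\ref{cor:Wild attractors dense} then gives an order $l_0(N)$ such that an $S$-unimodal realization of order $l\ge l_0$ has an absorbing Cantor set. Such realizations exist by full families, for example $x\mapsto a(1-|2x-1|^l)$, and because non-renormalizable kneading sequences with no periodic attractor are realized there.

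\textbf{Step 4: non-conjugacy.} Distinct sequences $\gamma^\ast$ produce distinct kneading sequences $H_\infty$, since the lengths $T_n$ of the cutting-time skeleton differ. Distinct kneading sequences give non-conjugate maps.

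\textbf{Main obstacle.} I expect the main difficulty to be Step 2: finding, for an arbitrary $m_0$, shift-maximal non-renormalizable words $H_0C$ and $E_0C$ of length $m_0$ that differ in exactly one symbol. My first attempt would be $H_0=10^{m_0-2}$-type words (or $1 0 1^{m_0-3}$), with $E_0$ obtained by flipping the penultimate symbol, and I would check shift-maximality directly. If a given $m_0$ fails, I would absorb a factor into $j'_1$ and retain $m_0$ from the finite part only up to a bounded factor.
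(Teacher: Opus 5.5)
Your argument is correct, and it departs from the paper's in two places.

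\emph{The seeds.} The paper fixes one bounded regrouping $(k_1,k_2,\ldots)$. It then takes $H_0,E_0$ from the strange-star construction so that they match the first $m$ symbols of a given non-renormalizable kneading sequence and satisfy $|H_0C|=k_1\cdots k_r$. You instead fix the length $m_0$ and use explicit words. What you call the main obstacle is settled by your first candidate. For $m_0\ge 4$, every nontrivial shift of $10^{m_0-2}C$ or of $10^{m_0-3}1C$ either begins with $0$, equals $C$, or equals $1C$. Moreover $1C\prec 10\cdots$, because $C\prec 0$ after an odd number of $1$'s. So both words are shift-maximal, and they differ only in position $m_0-1$. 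Also $10^{m_0-2}C$ contains a single $1$, so it is not a nontrivial $\star$-product. For $m_0=3$ no kneading odometer triple exists at all: $10C$ is the only candidate $H_0C$, and $11C$ is not shift-maximal. So you do need your freedom to push an extra infinite-exponent prime into $m_0$.

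\emph{Uncountability.} The paper runs its construction for an uncountable family of sequences $\mathcal K(f_\alpha)$. But once the tail $(k_{r+1},k_{r+2},\ldots)$ is fixed, $H_\infty$ is determined by the finite data $(H_0,E_0,r)$. So that procedure can only produce countably many kneading sequences. Agreement with $\mathcal K(f_\alpha)$ on arbitrarily long blocks holds only across different runs, not for a single $\tilde f_\alpha$.

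Your device supplies the needed infinite freedom: vary the bounded regrouping of an infinite-exponent prime, say $p^2$ versus $p^3$ at each step. Distinctness is cleanest to check directly rather than via cutting times. Suppose two tails first differ at index $n$ with $j'_n<j''_n$. Look at the $(j'_n-1)$-st block of length $T_{n-1}$ following the initial $H_{n-1}$ in $H_n$. It is $\Delta_nH_{n-1}$ in one sequence and $\Delta_nE_{n-1}$ in the other, so the two kneading sequences differ. Since your bound $M$ is uniform over the family, one order $l\ge l_0(4M-7)$ works for all of these maps simultaneously.

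What the paper's choice of seeds buys, and yours does not, is the density statement: approximating an arbitrary non-renormalizable map. The corollary does not need it. The two approaches combine naturally: take the paper's seeds and vary the tail as you do.
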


\begin{proof}
Two sequences generate conjugate odometers if one can be obtained from the other
by refining or grouping entries. Hence we may replace \(\gamma\) by an equivalent
sequence obtained by factoring each \(j_i\) into primes and regrouping these
factors into blocks. Since only finitely many primes occur, we can construct a
sequence
\[
\gamma'=(k_1,k_2,\ldots)
\]
such that \(\gamma'\) generates the same odometer as \(\gamma\) and there exists
\(M\in\mathbb N\) with
\[
3\le k_i\le M \qquad \text{for all } i.
\]

Let \(\mathcal K(f)=e_1e_2e_3\cdots\) be the kneading sequence of a
non-renormalizable unimodal map \(f\). By the construction in
\cite{Alvin_strange_star} (see also \cite{BlockKeeslingMisiurewicz}), for each
\(m\in\mathbb N\) we can choose finite words \(H_0\) and \(E_0\) such that:
\begin{itemize}
    \item \(H_0C\) and \(E_0C\) are shift maximal,
    \item they differ in exactly one position,
    \item they agree with \(\mathcal K(f)\) in the first \(m\) symbols,
    \item and \(|H_0C|=k_1k_2\cdots k_r\) for some \(r\in\mathbb N\).
\end{itemize}

Applying Theorem~\ref{Theorem: infinite strange star} to the kneading odometer
triple
$(H_0,E_0,(k_{r+1},k_{r+2},\ldots))$
produces a unimodal map \(\tilde f\) such that
\(\tilde f|_{\omega(c)}\) is conjugate to the odometer generated by
\[
(|H_0|+1,k_{r+1},k_{r+2},\ldots)
=
(k_1,k_2,\ldots),
\]
which is conjugate to the odometer generated by \(\gamma\).
Moreover, since the entries \(k_i\) are uniformly bounded, it follows from
Corollary~\ref{cor:Wild attractors dense} that, for sufficiently large critical
order, the map \(\tilde f\) has an absorbing Cantor set.

Finally, let \(\{\mathcal K(f_\alpha)\}_{\alpha\in A}\) be an uncountable family
of pairwise distinct kneading sequences of non-renormalizable unimodal maps.
For each \(\alpha\), perform the above construction using \(\mathcal K(f_\alpha)\)
to obtain a map \(\tilde f_\alpha\). The kneading sequence of \(\tilde f_\alpha\)
agrees with \(\mathcal K(f_\alpha)\) on arbitrarily long initial blocks, so
distinct sequences \(\mathcal K(f_\alpha)\) yield distinct kneading sequences.
Since kneading sequences determine unimodal maps up to topological conjugacy,
the maps \(\tilde f_\alpha\) are pairwise non-conjugate.

Therefore there exist uncountably many pairwise non-conjugate \(S\)-unimodal maps
with the required properties.
\end{proof}

%\Jernej{Do we need to argue why this $Q(k)$ actually corresponds to the kneading sequence $H_{\infty}$?}\red{I think since the cutting times are coming from the new lemmas that we added more detail to, we should be fine to just state the kneading map. The first few cutting times are found by inspection, and then the ones that repeat are from Lemma 4.5 with the fact that this is a nice fixed quadrupling procedure.}\Jernej{I agree.}

We now include an example of a unimodal map with an absorbing Cantor set that can be constructed by the technique introduced in this section. We state the kneading odometer triple, the resulting kneading map, and a corresponding critical order that guarantees the existence of the absorbing Cantor set.

\begin{example}\label{Example: absorbing Cantor set}
    Let $H_0 = 100011$, $E_0 = 100111$, and $\gamma = (4,4,4,4,\ldots)$. Then the kneading sequence $H_\infty$ generated as in Theorem~\ref{Theorem: infinite strange star} has kneading map given  by $$Q(k) = \begin{cases}
        0, & k\in \{1,2,3,5,8,11\} \\
        1, & k\in \{4,7,10\} \\
        3, & k\in \{6,9\} \\
        5, & k\in \{12,14,16\} \\
        11, & k\in \{13,15\} \\
        k-5, & k = 5n-3, n\geq 4\\
        k-2, & k = 5n-2, n\geq 4\\
        k-7, & k=5n-1, n\geq 4\\
        k-4, & k= 5n, n\geq 4\\
        k-9, & k =5n+1, n\geq 4.
    \end{cases}$$
    Additionally, if $\mathcal{K}(f) = H_\infty$, then $f|_{\omega(c)}$ is topologically conjugate to the odometer generated by $\gamma'=(7,4,4,4,\ldots)$. We note that by construction, $f$ is non-renormalizable.

    Observe that for all $k\geq 17$, $$Q(k+1) = \begin{cases}
        k-1, & k = 5n-3, n\geq 4 \\
        k-6, & k = 5n-2, n\geq 4 \\
        k-3, & k = 5n-1, n\geq 4 \\
        k-8, & k = 5n, n\geq 4 \\
        k-4, & k = 5n+1, n\geq 4
    \end{cases}$$

    and for $k\geq 22$, 
    $$Q^2(k) + 1 = \begin{cases}
        k-9, & k = 5n-3, n\geq 5 \\
        k-10 & k = 5n-2, n\geq 5 \\
        k-11 & k = 5n-1, n\geq 5 \\
        k-12 & k = 5n, n\geq 5 \\
        k-13 & k = 5n+1, n\geq 5.
    \end{cases}$$

    Hence, let $k_0 = 22$. Then for all $k\geq k_0$, it follows that $Q(k+1) > Q^2(k)+1$ and $Q(k) \geq k-9$.  It thus holds that there exists an $l_0 = l_0(9)$ such that if $l\geq l_0$ is the critical order of the S-unimodal map $f$ with $\mathcal{K}(f) = H_\infty$, then $f$ has an absorbing Cantor set.
    
\end{example}

We believe this process can be modified to other ``strange star products" that can be used to generate additional kneading sequences of unimodal maps with embedded odometers and absorbing Cantor sets. That is, we could modify the process to allow for different rules for concatenating the $H_n$'s and $E_n$'s or even allow for the concatenation of more than two distinct blocks at each stage. It remains open whether this construction can be modified to allow for the sequence of $j_i$'s in $\gamma$ to have arbitrarily unbounded prime divisors. Our proof that this construction results in unimodal maps with absorbing Cantor sets relies on applying \cite[Proposition 6.1]{Bruin_Absorbing_Cantor}, and thus it is necessary for us to keep a bound on the number of cutting times between $|H_nC|$ and $|H_{n+1}C|$; it would be interesting if there is a construction that allows for the $j_i$'s to be unbounded yet still preserves this property. 

It is well-known that if a unimodal map has an absorbing Cantor set, then the turning point $c$ is persistently recurrent \cite{Blokh_Lyubich_Measurable_Dynamics_of_S_unimodal}. Additionally, Guckenheimer and Johnson show that if $f$ has an absorbing Cantor set, then $c$ is not critically monotonic \cite{GJ}; that is, (C6) must hold. There is no complete characterization for the existence of an absorbing Cantor set. It is tempting to ask whether every kneading sequence belonging to a unimodal map with an embedded strange odometer for which the turning point is persistently recurrent has combinatorics that allows for the existence of an absorbing Cantor set. This is not the case, as Example~\ref{Example: Fixed C7 not C6} provides the kneading sequence of a unimodal map with an embedded strange odometer that is  persistently recurrent and critically monotonic; thus, the kneading sequence does not belong to any unimodal map with an absorbing Cantor set. Therefore, it is natural to ask Question~\ref{Question: wild attractors}.

\begin{comment}
We thus conclude this section with the following question.

\begin{question}
    Does every kneading sequence belonging to a unimodal map with an embedded strange odometer for which the turning point is not critically monotonic (i.e., (C6) holds) have combinatorics that allows for the existence of an absorbing Cantor set as long as the associated map has high enough order? If not, what if we assume either (C5) or (C4) holds?
\end{question}
\end{comment}

% \begin{remark} Things to comment on:
% \begin{itemize}
%     \item See if Henk's result generalizes when $f$ is finitely renormalizable. Does non-renormalizable assumption actually matter?
% \end{itemize}
% \end{remark}

%
%
%
%
%
%
%
%
 \section{Slow Recurrence and Collet-Eckmann condition}\label{Section:slow recurrence}
Let $\{M_i\}$ be the sequence of return times for the kneading sequence $\mathcal{K}(f)$; see Section~\ref{Section: Symbolic Characterization} for additional background on the function $\RR(n)$.

\begin{definition}\cite[Lemma 2.2]{Sands}\label{def:slow recurrence} We say that the kneading sequence $\mathcal{K}(f)$ is {\em slowly recurrent} if either of the two equivalent statements holds.

\begin{equation}
    \lim_{l\to \infty}\limsup_{i\to \infty} \frac{\sum_{j=1}^i \begin{cases}
        \mathcal{R}(j) & \text{if } \mathcal{R}(j) \geq l \\ 
        0 & \text{otherwise}
    \end{cases}}{i} = 0
\end{equation}

\begin{equation}
    \lim_{l\to \infty}\limsup_{i\to \infty} \frac{\sum_{j=1}^i \begin{cases}
        \mathcal{R}(M_j) & \text{if } \mathcal{R}(M_j) \geq l \\ 
        0 & \text{otherwise}
    \end{cases}}{M_i} = 0
\end{equation}
\end{definition}

The following proposition follows from \cite[Theorem 2]{Bruin_QuasiSymmetry}; we include a proof using symbolic arguments rather than the measure-theoretic arguments provided in \cite{Bruin_QuasiSymmetry}. 

\begin{proposition}\label{Proposition: Persitently recurrent not slowly recurrent}
    If $c$ is persistently recurrent, then $\mathcal{K}(f)$ is not slowly recurrent.
\end{proposition}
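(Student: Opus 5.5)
We argue by contrapositive in spirit: assume $c$ is persistently recurrent and show that the second (return-time) formulation in Definition~\ref{def:slow recurrence} fails. In other words, there is a fixed $\eta>0$ such that for every $l$,
\[
\limsup_{i\to\infty}\frac{1}{M_i}\sum_{j\le i,\ \RR(M_j)\ge l}\RR(M_j)\ \ge\ \eta .
\]
Our tool is Corollary~\ref{2nd Corollary to characterization}. Given $l$, there is an $I$ such that for all $i\ge I$, $\RR(M_i)<l$ forces $\RR(M_{i+1})\ge l$. So among the return times beyond $I$, "short" returns (with $\RR(M_i)<l$) never occur twice in a row. Every short return is immediately followed by a long one.

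The key step is to compare the growth of the return times with the sizes $\RR(M_j)$. By Lemma~\ref{Lemma: helpful kneading facts}, $M_{i+1}-M_i=\RR^-(M_i)=S_{t_i-1}$, while $\RR(M_i)=S_{t_i}$. Since $S_{t_i}=S_{t_i-1}+S_{Q(t_i)}\le 2S_{t_i-1}$, we get
\[
M_{i+1}-M_i\ \ge\ \tfrac12\,\RR(M_i).
\]
We also want the reverse bound, $M_{i+1}-M_i\le \RR(M_i)$, which is immediate. Hence
\[
M_i - M_I=\sum_{j=I}^{i-1}(M_{j+1}-M_j)\le \sum_{j=I}^{i-1}\RR(M_j).
\]
Split this sum into short terms ($\RR(M_j)<l$) and long terms. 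Each short term is less than $l$, but it is followed by a long term, which is at least $l$. So the short terms together are at most the sum of the long terms. Therefore
\[
M_i - M_I\le 2\sum_{I\le j\le i,\ \RR(M_j)\ge l}\RR(M_j),
\]
and the ratio above is at least $\tfrac12-M_I/(2M_i)$. Letting $i\to\infty$ gives $\limsup\ge \tfrac12$ for every $l$. The limit in $l$ is then at least $\tfrac12\neq 0$, so $\mathcal K(f)$ is not slowly recurrent. Notably, this route only needs the upper bound $M_{j+1}-M_j\le\RR(M_j)$, not the factor-$2$ lower bound.

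The main obstacle is bookkeeping rather than any deep estimate. First, we must confirm the indexing in the second formulation: the sum runs over $j\le i$ and is normalized by $M_i$, so the sums $\sum_{j<i}\RR(M_j)$ do dominate $M_i$. Second, we must check that $M_{j+1}-M_j=\RR^-(M_j)\le\RR(M_j)$, including the case $t_j\le1$ where $\RR^-$ is undefined; this affects only finitely many $j$ and is absorbed into $M_I$. Third, we need $M_i\to\infty$, which holds because $c$ is recurrent and non-periodic, so there are infinitely many return times. If the indexing in Definition~\ref{def:slow recurrence} pairs $\RR(M_j)$ with a shifted gap, we would instead use $M_{j+1}-M_j$ compared with $\RR(M_{j-1})$ via item (4) of Lemma~\ref{Lemma: helpful kneading facts}, and the same pairing argument goes through.
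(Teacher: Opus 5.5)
Your proof is correct. It rests on the same key input as the paper's, namely Corollary~\ref{2nd Corollary to characterization}: beyond some index, short returns are isolated. The accounting, however, is different.

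The paper uses the exact identity $\RR(M_j)=N_{j+1}-M_j$ from Lemma~\ref{Lemma: helpful kneading facts}(4). It combines this with the fact, taken from the proof of the corollary, that $M_{j+2}=N_{j+1}$ whenever $\RR(M_{j+1})<k$. From these it shows inductively that the intervals $[M_j,N_{j+1}]$ over the long indices $j$ alone cover $[M_{I_k},N_{i+1}]$ for infinitely many $i$. Short returns are simply skipped, which gives the constant $1$.

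You instead use only the inequality $M_{j+1}-M_j=\RR^-(M_j)<\RR(M_j)$ and absorb each short term into the long term that immediately follows it, at the cost of a factor $2$. This buys you two things: you need only the \emph{statement} of the corollary, not the structural fact $N_i=M_{i+1}$ hidden in its proof, and you get a lower bound on the ratio for all large $i$ rather than only along a subsequence. The paper's version gives the sharper constant, but either constant suffices.

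One small point of bookkeeping. Your worry about indices with $t_j\le 1$ is unnecessary, and your unjustified claim that there are only finitely many such $j$ is not needed. Return times are defined recursively by $M_{j+1}=M_j+\RR^-(M_j)$, so the gap bound $M_{j+1}-M_j\le\RR(M_j)$ holds for every $j$.
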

\begin{proof}
     Suppose $c$ is persistently recurrent. Then by Corollary~\ref{2nd Corollary to characterization}, it follows that for every $k\in \N$, there exists an $I_k\in \N$ such that for all $i\geq I_k$, either $\RR(M_i)\geq k$ or $\RR(M_{i+1})\geq k$. Without loss of generality, let $I_k$ be chosen such that $\RR(M_{I_k})\geq k$. We claim that for infinitely many $i$, $$\sum_{j=I_k}^i\begin{cases}
        \mathcal{R}(M_j) & \text{if } \mathcal{R}(M_j) \geq k \\ 
        0 & \text{otherwise}
    \end{cases} \quad \geq N_{i+1}-M_{I_k}.$$

    First observe that $\RR(M_{I_k}) = N_{I_k+1}- M_{I_k}$. Suppose that $i$ is chosen such that $$\sum_{j=I_k}^i\begin{cases}
        \mathcal{R}(M_j) & \text{if } \mathcal{R}(M_j) \geq k \\ 
        0 & \text{otherwise}
    \end{cases} \quad \geq N_{i+1}-M_{I_k}.$$ Then if $\RR(M_{i+1})\geq k$, it follows that adding $\RR(M_{i+1})$ to the previous sum will result in a total that is at least as large as $N_{i+1}-M_{I_k} + N_{i+2}-M_{i+1}$. Because $N_{i+1}\geq M_{i+1}$, we conclude that the sum is larger than $N_{i+2} - M_{I_k}$. 

    If $\RR(M_{i+1})< k$, then it follows that $M_{i+2} = N_{i+1}$ and $\RR(M_{i+2})\geq k$. Thus $$\sum_{j=I_k}^{i+2}\begin{cases}
        \mathcal{R}(M_j) & \text{if } \mathcal{R}(M_j) \geq k \\ 
        0 & \text{otherwise}
    \end{cases} \quad \geq N_{i+1}-M_{I_k} + N_{i+3} - N_{i+1} = N_{i+3}-M_{I_k}.$$
    Hence $$\limsup_{i\to \infty}\frac{\sum_{j=1}^i\begin{cases}
        \mathcal{R}(M_j) & \text{if } \mathcal{R}(M_j) \geq k \\ 
        0 & \text{otherwise}
    \end{cases}}{M_i} \geq \limsup_{i\to\infty} \frac{N_{i+1}}{M_i} - \frac{M_{I_k}}{M_i} \geq 1.$$ We thus conclude that $$\lim_{l\to \infty}\limsup_{i\to \infty} \frac{\sum_{j=1}^i \begin{cases}
        \mathcal{R}(M_j) & \text{if } \mathcal{R}(M_j) \geq l \\ 
        0 & \text{otherwise}
    \end{cases}}{M_i} > 0.$$
    
    Hence $\mathcal{K}(f)$ is not slowly recurrent.
\end{proof}

% \begin{theorem}
%     If (C5) holds, then $c$ is not slowly recurrent.
% \end{theorem}
% \begin{proof}
%     Suppose that (C5) holds. This implies that $N_{i+1}-M_i\to \infty$, or equivalently $\RR(M_i) \to \infty$. Fix $l\in \N$. Then there exists $N_l\in \N$ such that for all $i\geq N$, $\RR(M_i)\geq l$.  Fix $i\in \N$.  Then 
%     \begin{align*}
%         \frac{\sum_{j=1}^i\begin{cases}
%         \mathcal{R}(M_j) & \text{if } \mathcal{R}(M_i) \geq l \\ 
%         0 & \text{otherwise}
%     \end{cases}}{M_i} &\geq \frac{\sum_{j=N_l}^i \RR(M_j)}{M_j} \\
%     & = \frac{\RR(M_{N_l}) + \RR(M_{N_l+1}) + \cdots + \RR(M_i)}{M_i} \\
%     & > \frac{(M_{N_l+1}-M_{N_l})+(M_{N_l+2} - M_{N_l+1}) +-\cdots + (M_{i+1}- M_i)}{M_i}\\
%     &= \frac{M_{i+1}-M_{N_l}}{M_i}
%     \end{align*}

%     Hence $$\limsup_{i\to \infty} \frac{\sum_{j=1}^i \begin{cases}
%         \RR(M_j) & \text{if } \RR(M_j) \geq l \\
%         0 & \text{otherwise}
%     \end{cases}}{M_i} \geq \limsup_{i\to \infty} \frac{M_{i+1}}{M_i} - \frac{M_{N_l}}{M_i} \geq 1.$$  We thus conclude that $$\lim_{l\to \infty}\limsup_{i\to \infty} \frac{\sum_{j=1}^i \begin{cases}
%         \mathcal{R}(M_j) & \text{if } \mathcal{R}(M_j) \geq l \\ 
%         0 & \text{otherwise}
%     \end{cases}}{M_i} > 0.$$ Hence $\mathcal{K}(f)$ is not slowly recurrent.
%     \end{proof}

\begin{lemma}\label{Lemma:Long co-cuts not slowly recurrent}
    If for infinitely many co-cutting times $n_i$, it follows that $\mathcal{R}(n_i)\geq n_i$, then $\mathcal{K}(f)$ is not slowly recurrent.
\end{lemma}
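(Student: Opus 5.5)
We use the first formulation of slow recurrence in Definition~\ref{def:slow recurrence}, the one summing $\RR(j)$ over all $j$, and exhibit a positive lower bound for the double limit. Fix $l\in\N$. Since the hypothesis gives infinitely many co-cutting times $n_i$ with $\RR(n_i)\ge n_i$, we may pass to a subsequence with $n_i\ge 2l$ and $n_{i+1}>2n_i$. For each such $n_i$ we look at the window of indices $j\in(n_i/2,\,n_i]$ and try to show that a fixed proportion of these $j$ have $\RR(j)$ large. Then the partial sum up to $i=n_i$ is at least a constant times $n_i$, uniformly in $l$.

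The key observation is a shift property of $\RR$. Since $\RR(n_i)=\min\{j: e_{n_i+j}\ne e_j\}\ge n_i$, the block $e_{n_i+1}\cdots e_{2n_i}$ coincides with $e_1\cdots e_{n_i}$. Hence for $1\le m<n_i$, $e_{n_i+m+j}=e_{m+j}$ whenever $m+j\le n_i$. So comparing the orbit of $c_{n_i+m}$ with that of $c_m$, and both with $c_j$, we get
\[
\RR(n_i+m)=\RR(m)\quad\text{whenever } \RR(m)\le n_i-m,
\]
and $\RR(n_i+m)\ge n_i-m$ otherwise. In particular, for the indices $m$ where $\RR(m)$ is large we obtain large returns in the window $(n_i, 2n_i]$. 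That is the wrong window, so instead we plan to use the index $n_i$ itself directly, together with the second equivalent formulation of Definition~\ref{def:slow recurrence}.

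Concretely, the simplest route is as follows. Since $\RR(n_i)\ge n_i\ge l$, the term $j=n_i$ alone contributes at least $n_i$ to $\sum_{j\le i}$ for any $i\ge n_i$. Take $i=n_i$ in the first formulation: the quotient is at least $\RR(n_i)/n_i\ge 1$. Therefore $\limsup_i \ge 1$ for every $l$, and the double limit is at least $1\neq 0$. So $\mathcal K(f)$ is not slowly recurrent.

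The main thing to check is that this use of the definition is legitimate, namely that the sum is normalized by the index $i$ and includes $j=i$. If one prefers to work with the second formulation over return times, the co-cutting time $n_i$ with $\RR(n_i)\ge n_i$ must be a closest return, hence a return time $M_{p}=n_i$. Indeed, $\RR(n_i)\ge n_i>\RR(j)$ for earlier $j$ is forced, because $\RR(j)\ge n_i>j$ would contradict shift-maximality-type constraints. That verification is the potential obstacle, and I would handle it via Lemma~\ref{Lemma: helpful kneading facts}. Once $n_i=M_p$, the ratio is at least $\RR(M_p)/M_p\ge 1$, giving the same conclusion.
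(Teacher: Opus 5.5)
Your third paragraph is exactly the paper's proof. For $n_i\ge l$, the single term $j=n_i$ with $\RR(n_i)\ge n_i$ already makes the quotient at $i=n_i$ at least $1$ in the first formulation of Definition~\ref{def:slow recurrence}, which is indeed normalized by $i$ and includes $j=i$, so the double limit is at least $1$. The window and shift-property discussion is unnecessary, and you should drop the return-time alternative, since your appeal to ``shift-maximality-type constraints'' does not actually justify the claim that such an $n_i$ must be a closest return.
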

\begin{proof}
    Suppose there is an infinite sequence of co-cutting times $(m_n)$ such that $\mathcal{R}(m_n) \geq m_n$.  Then for each fixed $l\in \N$, there exists an $N_l\in \N$ such that for all $n\geq N_l$, $\frac{\mathcal{R}(m_n)}{m_n} \geq 1$.  Hence $\mathcal R(m_n)\geq m_n\geq l$, and therefore $$\lim_{m_n\to\infty} \frac{\sum_{j=1}^{m_n} \begin{cases} \mathcal{R}(j) & \text{ if }\mathcal{R}(j) \geq l \\ 0 \text{ else } \end{cases}}{m_n} \geq \lim_{m_n\to \infty} \frac{\mathcal{R}(m_n)}{m_n} \geq 1.$$ We conclude that $$\lim_{l\to \infty}\limsup_{i\to \infty} \frac{\sum_{j=1}^i \begin{cases}
        \mathcal{R}(j) & \text{if } \mathcal{R}(j) \geq l \\ 
        0 & \text{otherwise}
    \end{cases}}{i} \neq 0.$$ It thus follows that $\mathcal{K}(f)$ is not slowly recurrent.
\end{proof}

\begin{definition}\label{Definition: Linear recurrence}
Recall that an infinite sequence $\overline{x}$ is linearly recurrent if there exists some $L\in \N$ such that every finite word $w$ appearing in $\overline{x}$ appears with gap $\leq L\cdot|w|$. 
\end{definition}

We now show that linear recurrence is incompatible with slow recurrence.

\begin{proposition}\label{prop: linearly recurrent not slowly recurrent}
    If $f$ is a unimodal map such that $\mathcal{K}(f)$ is a linearly recurrent sequence, then $\mathcal{K}(f)$ is not slowly recurrent.
\end{proposition}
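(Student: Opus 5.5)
The plan is to work with the first of the two equivalent formulations in Definition~\ref{def:slow recurrence}, which averages $\RR(j)$ over all $j$ rather than over return times, and to show that for every $l$ the truncated averages are bounded below by a positive constant independent of $l$. The basic observation is the symbolic meaning of $\RR$: by the formula $\RR(n)=\min\{j\ge 1: e_{n+j}\ne e_j\}$, if the prefix $w_m:=e_1e_2\cdots e_m$ of $\mathcal K(f)$ occurs at position $n+1$, i.e.\ $e_{n+1}\cdots e_{n+m}=e_1\cdots e_m$, then $\RR(n)>m$. Throughout, $c$ is recurrent and non-periodic, so $\RR(n)$ is finite for every $n$.

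\textbf{Step 1 (density of long returns).} Let $L$ be the constant of linear recurrence from Definition~\ref{Definition: Linear recurrence}. Fix $l\in\N$ and apply the definition to the word $w_l=e_1\cdots e_l$. Since $w_l$ occurs at position $1$, and consecutive occurrences are separated by gaps of at most $L\cdot l$, every window of $L l$ consecutive indices contains some $n$ at which $w_l$ begins at position $n+1$. Hence, for every $i$, the number of indices $j\in\{1,\dots,i\}$ with $\RR(j)>l$ is at least $\lfloor i/(Ll)\rfloor-1$.

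\textbf{Step 2 (lower bound for the truncated sum).} Each such index contributes at least $l$ to the sum $\sum_{j\le i,\ \RR(j)\ge l}\RR(j)$. So this sum is at least $l\bigl(i/(Ll)-2\bigr)=i/L-2l$. Dividing by $i$ and letting $i\to\infty$ gives
\[
\limsup_{i\to\infty}\frac{1}{i}\sum_{j=1}^{i}\RR(j)\,\mathbf 1_{\{\RR(j)\ge l\}}\ \ge\ \frac1L
\]
for every $l$. Letting $l\to\infty$, the limit is at least $1/L>0$, so $\mathcal K(f)$ is not slowly recurrent.

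\textbf{Main obstacle.} The only delicate point is the bookkeeping in Step~1. One has to fix the convention for ``gap'' (the distance between starting positions of consecutive occurrences), so that the count of occurrences up to $i$ is genuinely of order $i/(Ll)$. One also has to check the index shift between ``$w$ starts at $n+1$'' and ``$\RR(n)>m$''. Neither step affects the constant $1/L$ beyond $O(l/i)$ error terms, which vanish in the $\limsup$. No use of the return-time machinery of Section~\ref{Section: Symbolic Characterization} is needed.
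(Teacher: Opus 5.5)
Your proposal is correct and follows essentially the same argument as the paper. An occurrence of the prefix $e_1\cdots e_l$ at position $n+1$ forces $\RR(n)>l$, linear recurrence gives these indices lower density at least $1/(Ll)$, and each such index contributes at least $l$, which yields the uniform lower bound $1/L$ in the first formulation of slow recurrence. Your explicit count $\lfloor i/(Ll)\rfloor-1$ just makes the paper's density claim quantitative.
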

\begin{proof}
    Suppose that $\mathcal{K}(f)$ is linearly recurrent. Then there exists $L\in \N$ such that every finite word $w$ appears in $\mathcal{K}(f)$ with gap $\leq L\cdot|w|$. Fix $l\in \N$. 
 Every occurrence of the word $e_1e_2\cdots e_l$ beginning at position $m+1$
gives agreement of length at least $l$ between $\mathcal K(f)$ and its shift by
$m$. Hence $\mathcal R(m)\geq l$. Since such occurrences have gaps at most
$Ll$, the set of such integers $m$ has lower density at least $1/(Ll)$.
Therefore,
\[
\limsup_{i\to\infty}
\frac{
\sum_{j=1}^i
\begin{cases}
\mathcal R(j) & \text{if } \mathcal R(j)\geq l,\\
0 & \text{otherwise}
\end{cases}
}{i}
\geq
l\cdot \frac{1}{Ll}
=
\frac{1}{L}.
\]
    As the above limit yields $\frac{1}{L}$ for all lengths $l\in \N$, it follows that 
    \[
\lim_{l\to\infty}\limsup_{i\to\infty}
\frac{
\sum_{j=1}^i
\begin{cases}
\mathcal R(j) & \text{if } \mathcal R(j)\geq l,\\
0 & \text{otherwise}
\end{cases}
}{i}
\geq \frac1L>0.
\]  and thus $\mathcal{K}(f)$ is not slowly recurrent.   
\end{proof}

\begin{definition}
Let $f$ be an $S$-unimodal map with turning point $c$. Map $f$ satisfies the \emph{Collet-Eckmann condition (CE-condition)} provided there exists $\kappa>0$ and $\lambda>1$ so that
$$|(f^n)'(f(c))|>\kappa \lambda^n$$ for all $n\geq 1$.
\end{definition}

Nowicki and Przytycki \cite{NowickiPrzytycki1998} proved that within $S$-unimodal maps the CE-condition is equivalent to the following topological condition called finite criticality (also sometimes referred to as topological CE-condition).

\begin{definition}
Let $f:I\to I$ be unimodal with critical point $c$. We call $f$ {\em critically finite} provided there exist $M>0$, $P>0$ and $\delta>0$ such that for every $x\in I$ there exist a strictly increasing sequence of positive integers $\{n_i\}_{i\geq 1}$ such that for each $i$ we have $n_i\leq P\cdot i$ and
$$ \#\{j| 0\leq j\leq n_i \text{ and } c\in Comp(f^j(x),f^{-(n_i-j)}(B(f^{n_i}(x),\delta)))\}\leq M.$$
\end{definition}

For $S$-unimodal maps, note that by \cite[Corollary 36]{Sands} slow recurrence (a combinatorial condition) implies the CE-condition (a metric condition). In \cite[Theorem 2]{Bruin_QuasiSymmetry}, it was shown that an  S-unimodal map $f$ with a persistently recurrent turning point does not satisfy the CE-condition. 

In personal correspondence with Henk Bruin, we were asked if it was possible to find an S-unimodal map $f$ that satisfies the CE-condition with $f|_{\omega(c)}$ a minimal homeomorphism. As slow recurrence implies the CE-condition, we now modify the construction from Section~\ref{Section: Absorbing Family} to generate the kneading sequence of an S-unimodal map $f$ such that $f|_{\omega(c)}$ is conjugate to an odometer and $\mathcal{K}(f)$ is slowly recurrent.

\begin{lemma}
    Let $H_0C$ and $E_0C$ be two shift-maximal kneading sequences that are the same length and disagree in only one position. When $H_0$ has even parity set $\Delta = 1$, and set $\Delta = 0$ if $H_0$ has odd parity; let $\Delta' = 1-\Delta$. Consider $j\geq 2$ and define $H_1C = H_0(\Delta E_0)^{j-1}C$ and $E_1C = H_0\Delta'E_0(\Delta E_0)^{j-2}C$. Then $H_1C$ and $E_1C$ are shift-maximal.
\end{lemma}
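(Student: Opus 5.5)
We verify shift-maximality directly: for every shift $k$ we compare $\sigma^k(H_1C)$ and $\sigma^k(E_1C)$ with the original word in the parity-lexicographic order. We also use the observation already exploited in Lemma~\ref{Lemma: star product is shift-maximal}: since $H_0$ and $E_0$ differ in exactly one position, $H_0$ and $E_0$ have opposite parity. Write $p=|H_0C|=|E_0C|$.

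\textbf{Step 1: the boundary letters are forced.} We first record how $\Delta$ and $\Delta'$ are chosen. Since $H_0C$ is shift-maximal and $C$ is its terminal symbol, the word $H_0\Delta$ exceeds $H_0\Delta'$ in parity-lexicographic order exactly when $\Delta=1$ for even parity of $H_0$. The same holds for $E_0$, with parity flipped, since $E_0$ has the opposite parity. Hence replacing the terminal $C$ of $H_0C$ by $\Delta$ gives the larger continuation, while $E_0\Delta$ is the smaller one. From this we derive the key comparison: $H_0\Delta E_0\cdots \succ H_0\Delta' E_0\cdots$. This shows $E_1C\preceq H_1C$, which is needed in the cross-comparisons below.

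\textbf{Step 2: shifts landing at a block boundary.} For $k=mp$ with $1\le m\le j-1$, the shifted word $\sigma^k(H_1C)$ begins with $E_0(\Delta E_0)^{\cdots}C$. It is compared with $H_1C=H_0\Delta E_0\cdots$. The words $E_0$ and $H_0$ first differ at their single differing position. Because $E_0C$ is shift-maximal and is comparable to $H_0C$, the comparison there is decided by the ordering between $E_0C$ and $H_0C$. We will need to check that $E_0C\prec H_0C$ in this setup; this is exactly as in \cite{Alvin_strange_star}, Propositions 3.1--3.2. If instead the order is reversed, the assumption is really that $H_0$ was chosen as the larger word, and we fix the labelling accordingly. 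For $E_1C$ the same argument applies, together with the extra letter $\Delta'$ occurring after the first block. Step~1 shows that this extra letter only makes the shifted word smaller.

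\textbf{Step 3: shifts inside a block.} For $k$ not a multiple of $p$, the shifted word starts with a suffix of $H_0$ or $E_0$, followed by $\Delta$ or $\Delta'$. Shift-maximality of $H_0C$ and $E_0C$ gives $\sigma^r(H_0C)\preceq H_0C$. If the comparison is decided strictly before the suffix ends, we are done. Otherwise the suffix $w$ of length $p-1-r$ agrees with a prefix of $H_0$, and the next symbol of the original is compared with $\Delta$ or $\Delta'$. This is the main obstacle, namely the case analysis on long agreements. We handle it as in \cite{Alvin_strange_star}: when a suffix of $H_0$ or $E_0$ equals a prefix $u$ of $H_0$, the symbol following $u$ in $H_0C$ is dominated by the cutting structure. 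Shift-maximality of the finite words, with $C$ lying strictly between $0$ and $1$, forces the prefix followed by $C$ to be at least the suffix followed by the next letter. The parity of the prefix then determines which of $\Delta,\Delta'$ is larger, and one checks that the resulting inequality is never strict in the wrong direction.

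In the equal-length ties, the single differing position between $H_0$ and $E_0$ is the only place where the two block types can disagree. At that position we reuse the choice of $\Delta$ exactly as in the last sentence of the proof of Lemma~\ref{Lemma: star product is shift-maximal}. Assembling Steps 2 and 3 over all $k<|H_1C|$ gives shift-maximality of both $H_1C$ and $E_1C$.
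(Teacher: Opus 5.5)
The paper's proof only says the argument is analogous to \cite{Alvin_strange_star}, so your shift-by-shift outline has the expected shape. The two points you actually supply, however, do not hold up. First, in Step 2 you rightly observe that a shift by $mp$ is decided at the unique position where $E_0$ and $H_0$ differ, so $E_0C\prec H_0C$ is needed. This can neither be checked from the hypotheses nor obtained by relabelling, since $\Delta$, $H_1$ and $E_1$ are all defined from the word called $H_0$. Without it the statement is false. Take $H_0C=1011C$ and $E_0C=1001C$: both are shift-maximal and differ only in position $3$, and $H_0$ is odd, so $\Delta=0$. For $j=2$ one gets $H_1C=101101001C$ with $\sigma^5(H_1C)=1001C\succ H_1C$, decided at position $3$ after the odd prefix $10$. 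The same shift breaks $E_1C=101111001C$. So $H_0C\succ E_0C$ must be stated as an extra hypothesis. It holds in Example~\ref{Example: slowly recurrent odometer}, where $1001C\succ 1011C$. It also propagates to the next stage because $H_1C\succ E_1C$, which is what your Step 1 really shows.

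Second, Step 3 skips the case that carries the content. Suppose $h_{r+1}\cdots h_{p-1}=h_1\cdots h_{p-r-1}$ for some $1\le r\le p-1$. Shift-maximality of $H_0C$ makes $h_{p-r}$ the larger continuation, so $h_1\cdots h_{p-r}$ is odd. Since $H_0\Delta$ is odd and $H_0\Delta'$ is even, the inserted letter \emph{equals} $h_{p-r}$ exactly when $h_1\cdots h_r$ is even (for $\Delta$ in $H_1C$), resp.\ odd (for $\Delta'$ in $E_1C$). Then nothing is decided at the inserted letter. After an odd common prefix you must compare the next block $E_0\cdots$ of the shifted word with $h_{p-r+1}\cdots h_{p-1}\Delta E_0\cdots$, resp.\ $h_{p-r+1}\cdots h_{p-1}\Delta' E_0\cdots$.

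This already happens in the paper's own example $H_0=1001$, $E_0=1011$. Here $\sigma^3(E_1C)=1\,0\,1011\cdots$ and $E_1C=1001\,0\,1011\cdots$ agree on $10$, the second letter being the inserted $\Delta'=0$. The comparison is settled only by the first letter of the following $E_0$. So your assertions that $\Delta'$ ``only makes the shifted word smaller'' and that the inequality ``is never strict in the wrong direction'' miss the real danger, which is equality.

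Resolving these ties requires an actual argument, and so do the analogous ones for shifts starting inside an $E_0$-block. There the comparison is against the prefix $H_0$, so $\sigma^r(E_0C)\prec E_0C$ has to be combined with $E_0C\prec H_0C$. In the tie case the border makes $H_0\Delta$ (resp.\ $H_0\Delta'$) $r$-periodic, and the continuation becomes a comparison between rotations of the period word. You must then use shift-maximality of $E_0C$ together with the ordering to control where the differing position of $E_0$ can lie. Your proposal contains none of this, and the appeal to the last sentence of the proof of Lemma~\ref{Lemma: star product is shift-maximal} does not supply it.
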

\begin{proof}
    The proof is analagous to the proofs for shift-maximality in \cite{Alvin_strange_star} and in Section~\ref{Section: Absorbing Family}.
\end{proof}

\begin{proposition}\label{Proposition: SR Pattern generation}
    Let $(H_0,E_0,\gamma)$ be a kneading odometer triple (but here we relax the restriction on $\gamma$ so that each $j_i\geq 2$). Define $H_\infty = \lim_{n\to\infty} H_n C$ where $$H_nC = H_{n-1}(\Delta E_{n-1})^{j_n-1}C \text{ and } E_nC = H_{n-1}\Delta'E_{n-1}(\Delta E_{n-1})^{j_n-2}C.$$ Then $H_\infty$ is shift-maximal and belongs to an S-unimodal map $f$ such that $f|_{\omega(c)}$ is conjugate to the odometer with $\gamma' = (|H_0|+1,j_1,j_2,\ldots)$.  
\end{proposition}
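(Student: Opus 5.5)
The argument runs parallel to the proof of Theorem~\ref{Theorem: infinite strange star}. It has three parts: shift-maximality of the limit, realization by an $S$-unimodal map, and identification of $f|_{\omega(c)}$ as an odometer.

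\textbf{Shift-maximality.} I will first check by induction on $n$ that $H_nC$ and $E_nC$ have the same length and differ in exactly one position. The length claim is clear, since both words consist of $H_{n-1}$ followed by $j_n-1$ blocks of the form $xE_{n-1}$ with $x\in\{\Delta,\Delta'\}$. The two words differ only in the symbol immediately after the initial $H_{n-1}$, where one has $\Delta$ and the other $\Delta'$.

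Next I will check that $\Delta$ is correctly updated at each stage. Here $\Delta=\Delta_n$ is determined by the parity of $H_{n-1}$, and the parity of $H_n$ (and of $E_n$) can be computed from the number of $1$'s contributed by the repeated $\Delta E_{n-1}$ blocks. With these two facts, the preceding lemma applies at every stage, with $(H_{n-1},E_{n-1},j_n)$ in place of $(H_0,E_0,j)$. It gives that every $H_nC$ and $E_nC$ is shift-maximal.

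Since $H_nC$ is a prefix of $H_{n+1}C$ up to the final symbol, the limit $H_\infty$ exists. Any failure $\sigma^k(H_\infty)\succ H_\infty$ would be witnessed at a finite position. That witness would contradict the shift-maximality of some $H_nC$ with $|H_nC|$ much larger than $k$ plus the position of the witness, comparing against the $C$-terminated word in the usual way. So $H_\infty$ is shift-maximal.

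\textbf{Realization.} By Milnor--Thurston kneading theory, the shift-maximal sequence $H_\infty$ is realized in the logistic family. This gives an $S$-unimodal map $f$ with $\mathcal K(f)=H_\infty$.

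\textbf{Odometer structure.} Set $T_n=|H_nC|=(|H_0|+1)j_1\cdots j_n$. The recursion shows that $H_\infty$ is periodic with period $T_n$ off a skeleton of holes. These holes are exactly the positions $qT_{n-1}$, $1\le q<j_n$, where the $\Delta$ or $\Delta'$ symbols are placed, and each such position is filled consistently at a later stage. Hence $H_\infty$ is a Toeplitz sequence with periodic structure $(T_n)$, and $c$ is regularly recurrent.

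To conclude that $f|_{\omega(c)}$ is conjugate to the odometer, I will invoke \cite{Alvin_ftcp}: a Toeplitz kneading sequence with the finite time containment property yields odometer dynamics. I will verify the finite time containment property by the same argument as in \cite{Alvin_strange_star}. For each $n$, the intervals $f^i\bigl(\bigl[c_{T_n};c_{2T_n}\bigr]\bigr)$, $0\le i<T_n$, are pairwise disjoint. This follows because, within each period block, the words $H_{n-1}$ and $E_{n-1}$ share all symbols except the single distinguished position, so the itineraries separate only at the designated holes.

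The odometer is then generated by the successive ratios $T_0=|H_0|+1$, $T_1/T_0=j_1$, $T_2/T_1=j_2$, and so on, which is $\gamma'=(|H_0|+1,j_1,j_2,\ldots)$.

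\textbf{Main obstacle.} I expect the finite time containment verification to be the hard step, especially when $j_n=2$. In that case the repeated block appears only once, so the comparisons used in \cite{Alvin_strange_star} must be rechecked. I would first write out the cutting times between $T_{n-1}$ and $T_n$, as in Lemma~\ref{Remark: cutting times H}, and confirm that $T_n$ is a cutting time whose central branch maps into a nested family of intervals around $c$.
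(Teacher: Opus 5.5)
Your outline is the paper's route: the paper proves this by analogy with Theorem~\ref{Theorem: infinite strange star}, iterating the preceding lemma, passing to the prefix limit, and then using the Toeplitz structure, the finite time containment property and \cite{Alvin_ftcp}. Your shift-maximality and realization parts are fine. Since $H_{n-1}$ and $E_{n-1}$ differ in one symbol, $\Delta E_{n-1}$ always has even parity, so $\Delta$ never changes; this is why the statement writes $\Delta$ without an index.

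The odometer step has two problems. The first is your description of the holes. Every aligned $T_n$-block of $H_\infty$ is $H_nx$ or $E_nx$ with $x\in\{\Delta,\Delta'\}$, and $H_n$, $E_n$ differ only at position $T_{n-1}$. So the $T_n$-skeleton has holes exactly in the classes $T_{n-1}$ and $0$ modulo $T_n$. The positions $qT_{n-1}$ with $2\le q\le j_n-1$ carry $\Delta$ in both words and are not holes, and the class $0$ is missing from your list.

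The second problem is more serious: the interval property you propose to verify is false in a case the proposition allows. Let $J=[c_{T_n};c_{2T_n}]$. Since $e_{T_n}=e_{2T_n}=\Delta$, $J$ lies on one side of $c$.

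Take $j_{n+1}=2$. Then $I(c_{T_n})=\Delta E_n\Delta H_n\cdots$ and $I(c_{2T_n})=\Delta H_n\Delta'\cdots$. These agree for exactly $T_{n-1}$ symbols, so $f^{T_{n-1}}|_J$ is monotone. Its image has endpoints $c_{T_n+T_{n-1}}$ and $c_{2T_n+T_{n-1}}$, which lie on opposite sides of $c$ (symbols $\Delta'$ and $\Delta$) and both have $\RR=T_{n-2}$. On the other hand, $\RR(T_n)=T_{n-1}$ and $\RR(2T_n)=T_n$. Set $S_K=T_{n-2}$, which is a cutting time because $\RR(T_{n-1})=T_{n-2}$. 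Lemma~\ref{Lemma: close to c} then gives $J\subset(z_K,\hat z_K)\subset f^{T_{n-1}}(J)$, although $0<T_{n-1}<T_n$. So pairwise disjointness fails.

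For $j_{n+1}\ge 3$ the property does not help either. The first $i$ with $c\in f^i(J)$ is $(j_{n+1}-2)T_n+T_{n-1}>T_n$. Hence $c\in\omega(c)$ lies outside $\bigcup_{0\le i<T_n}f^i(J)$, and these intervals cannot give the cyclic decomposition of $\omega(c)$ that an odometer requires.

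What actually carries over from \cite{Alvin_strange_star} is combinatorial. The paper applies \cite[Theorem 4.2]{Alvin_ftcp} in Section~\ref{sec: regularly recurrent minimal homeomorphism not odometer} precisely by locating skeleton holes. The needed fact here is the corrected description above: all holes of the $T_n$-skeleton lie in $T_{n-1}\mathbb{Z}$. This is just as in Section~\ref{Section: Absorbing Family}, where $H_n$ and $E_n$ differ only at position $(j_n-1)T_{n-1}$. The fact holds verbatim when $j_n=2$, so the case you flagged as the main obstacle needs no separate treatment.
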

\begin{proof}
    The proof is analagous to the proof for Theorem~\ref{Theorem: infinite strange star}.
\end{proof}

We now use Proposition~\ref{Proposition: SR Pattern generation} to construct a slowly recurrent kneading sequence belonging to a unimodal map $f$ with $f|_{\omega(c)}$ conjugate to an odometer.
\begin{example}\label{Example: slowly recurrent odometer}
    Let $H_0C = 1001C$, $E_0C = 1011C$, and $\gamma = (2^2,2^3,\ldots, 2^{i+1},\ldots )$. Construct $H_\infty$ as in Proposition~\ref{Proposition: SR Pattern generation}. Then $H_\infty$ is shift-maximal and belongs to an S-unimodal map $f$ such that $f|_{\omega(c)}$ is conjugate to the odometer generated by $\gamma'=(5,2^2,2^3,\ldots)$. We write the start of the kneading sequence $H_\infty$ here so that some patterns can be easily understood. We indicate cutting times with a period and co-cutting times with a prime.

    \begin{align*}
        H_\infty = & 1.0.0.1'1.1'0'1.1'1.1'0'1.1'1.1'0'1.1'1.1'0'0'10.1'0'1.1'1.1'0'1.1'1.1'0'1.1'1.\\
         & 1'0'0'10.1'0'1.1'1.1'0'1.1'1.1'0'1.1'1.1'0'0'10.1'0'1.1'1.1'0'1.1'1.1'0'1.1'1. \\
         & 1'0'0'10.1'0'1.1'1.1'0'1.1'1.1'0'1.1'1.1'0'0'10.1'0'1.1'1.1'0'1.1'1.1'0'1.1'1. \\
         & 1'0'0'10.1'0'1.1'1.1'0'1.1'1.1'0'1.1'1.1'0'0'10.1'0'1.1'1.1'0'1.1'1.1'0'1.1'1. \\
         & 1'0'0'11'101'11'101'11'101'10.1'0'0'10.1'0'1.1'1.1'0'1.1'1.1'0'1.1'1. \cdots
         %& 1'0'0'10.1'0'1.1'1.1'0'1.1'1.1'0'1.1'1.1'0'0'10.1'0'1.1'1.1'0'1.1'1.1'0'1.1'1.
    \end{align*}
    First note that if $|H_{n-1}C|< l \leq |H_nC|$, then $\RR(m)\geq l$ if and only if $m$ is a multiple of $|H_{n+1}C|$. Further, each multiple of $|H_{n+1}C|$ is a return time, and thus all other return times will have $\RR(M_i) < l$. 
    
    For each $n\geq 0$ set $l_n = |H_nC|$. We may calculate an upper bound on $$S_n = \limsup_{i\to \infty} \frac{\sum_{j=1}^i \begin{cases}
        \mathcal{R}(M_j) & \text{if } \mathcal{R}(M_j) \geq l_n \\ 
        0 & \text{otherwise}
    \end{cases}}{M_i}$$ by considering the sequence of return times of the form $|H_kC|$ for each $k\geq 1$.

    Fix $n=0$ so that $l_0 = |H_0C| = 5$ and first consider $M_i = |H_1C|$. Then the ratio of the summation of long agreements of the return times through $M_i$ (which we will simply call the summation ratio) is $$\frac{|H_0C|}{|H_1C|} = \frac{5}{5\cdot 2^2} = \frac{1}{2^2}.$$
    Now consider $M_i = |H_2C|$. Then the summation ratio is $$\frac{(2^3-1)|H_0C| + |H_1C|}{|H_2C|} = \frac{11}{2^5}\leq \frac{2^4}{2^5} = \frac{1}{2}.$$
    When $M_i = |H_3C|$, the summation ratio is $$\frac{(2^4-1)\left((2^3-1)|H_0C|+|H_1C| \right) + |H_2C|}{|H_3C|}\leq \frac{2^4(2^3\cdot 5 + 2^2\cdot 5)+2^5\cdot 5}{2^9\cdot 5} = \frac{14}{32} \leq \frac{2^4}{2^5} = \frac{1}{2}.$$
    By continuing in this way, we see that $S_0 \leq \frac{1}{2}.$

    Now fix $n=1$ (so that $l_1=|H_1C|=2^2\cdot 5$) and first consider $M_i = |H_2C|$. Then the ratio of the summation of long agreements of the return times through $M_i$ is $$\frac{|H_1C|}{|H_2C|} = \frac{1}{2^3}.$$
    Now for $M_i = |H_3C|$, the summation ratio is $$\frac{(2^4-1)|H_1C| + |H_2C|}{|H_3C|} = \frac{(2^4-1)\cdot 2^2\cdot 5 + 2^5\cdot 5}{2^9\cdot 5} \leq \frac{2^4\cdot 2^2 + 2^5}{2^9} = \frac{3}{2^4} \leq \frac{2^2}{2^4} = \frac{1}{2^2}.$$
    For $M_i= |H_4C|$, the summation ratio is $$\frac{(2^5-1)((2^4-1)|H_1C| + |H_2C|) + |H_3C|}{|H_4C|} \leq \frac{2^9 +2^8+2^7}{2^{12}} = \frac{7\cdot 2^7}{2^{12}}\leq \frac{1}{2^2}.$$
    Iterating the same estimate gives
\[
S_n
\leq
\sum_{r=n+1}^{\infty}\frac{1}{2^r}
=
\frac{1}{2^n}.
\]
After shifting the indexing convention for $l_n=|H_nC|$, this gives the desired
bound $S_n\to 0$ as $n\to\infty$. Hence $\mathcal{K}(f)$ is slowly recurrent.
\end{example}

We note that by varying the initial shift-maximal sequences $H_0C$ and $E_0C$ in the previous example, we can generate a whole family of unimodal maps that are slowly recurrent and have a minimal homeomorphism on their $\omega$-limit sets. Additionally, we can vary the generating sequence $\gamma = (j_1,j_2,\ldots)$ so that the embedded odometer has a different underlying period structure provided that $j_i\to \infty$ quickly enough. Because any sequence $\gamma = (j_1,j_2,\ldots)$ can be rewritten as $\gamma'= (j_1\cdots j_{k_1}, j_{k_1+1}\cdots j_{k_2},j_{k_2+1}\cdots j_{k_3},\ldots)$, we conclude that we can generate examples of slowly recurrent S-unimodal maps with any embedded odometer by following the construction in Proposition~\ref{Proposition: SR Pattern generation}. 
We emphasize that as we used a different strange star product in this construction than what was used in Section~\ref{Section: Absorbing Family}, the maps constructed in this way are not persistently recurrent and thus do not belong to maps with wild Cantor attractors.

\begin{remark}
    Example~\ref{Example: slowly recurrent odometer} is an example of a map $f$ with a regularly recurrent turning point such that $\mathcal{K}(f)$ is slowly recurrent. It is also possible for a kneading sequence $\mathcal{K}(f)$ to be linearly recurrent with a regularly recurrent turning point and thus $\mathcal{K}(f)$ is not slowly recurrent. Hence there is no clear relationship between slow recurrence and regular recurrence (and thus uniform recurrence).
\end{remark}

%\Jernej{
%\begin{question}
   %Is Example~\ref{Example: reg rec and longbranched} slowly recurrent? 
   %Is Example~\ref{Example: reg rec and longbranched} CE? What is the relationship with critically finite (which is equivalent to CE)? %\Lori{Example 26 is not slowly recurrent. No kneading sequence from a left-proper substitution will be slowly recurrent.}
%\end{question}
%}

%
%
%
%
%
%
%
%
%
%
%
\section{Longbranchedness}\label{section: Longbranchdness}
It is well-known that if $Q(k) \to \infty$, then $\widetilde{Q}(k) \to \infty$. Conversely, if $\widetilde{Q}(k)$ is bounded, then so is $Q(k)$.

\begin{definition}
    A unimodal map $f$ is called \emph{longbranched} if there exist some $\delta >0$ such that $|D_n|>\delta$ for all levels of the Hofbauer tower; equivalently, if $Q(k)$ is bounded.
\end{definition}

Observe that if $Q(k)$ is bounded and $c$ is recurrent (not periodic), then $\widetilde{Q}(k)$ must be unbounded. We now prove that if $f$ is longbranched, then $c$ is not persistently recurrent.

	\begin{theorem}\label{thm:longbranched not persistently recurrent}
		If $Q(k)$ is bounded and $c$ is infinitely recurrent, then $c$ is not persistently recurrent.
	\end{theorem}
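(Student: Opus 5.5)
Since $Q$ is bounded, say $Q(k)\le B$ for all $k$, the plan is to produce a fixed $\epsilon>0$ and infinitely many $n$ with $r_{n-1}(c_1)\ge\epsilon$. By (C8) $\Leftrightarrow$ persistent recurrence, this suffices. The mechanism is that both endpoints of $\widetilde D_n$ stay uniformly away from $c_n$ along a subsequence.

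\textbf{Step 1: $c_n$ is far from the endpoint $c_{n-s(n)}$.} For $S_k<n\le S_{k+1}$ the ordinary Hofbauer domain is $D_n=(c_n,c_{n-S_k})\subset\widetilde D_n$, with $n-S_k\le S_{Q(k+1)}\le S_B$. By the longbranched assumption $|D_n|>\delta$ for all $n$, so $|c_n-c_{n-s(n)}|>\delta$. This endpoint therefore never contributes to small values of $r_{n-1}(c_1)$.

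\textbf{Step 2: reduce to the co-cutting endpoint.} If $c$ were persistently recurrent, $r_{n-1}(c_1)\to0$ would force
\[
|c_n-c_{n-\tilde s(n)}|\to 0 .
\]
Apply this at $n=M_i$, a cutting return time. Since $c_{M_i}\to c$ along cutting returns with $\RR(M_i)\to\infty$, we would get $c_{M_i-\tilde s(M_i)}\to c$ as well. Equivalently, in the language of Theorem~\ref{Theorem: Main Symbolic Characterization}, alternatives \ref{Main 2} and \ref{Main 3} must be excluded eventually.

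\textbf{Step 3: exclude \ref{Main 2} and \ref{Main 3}.} Both alternatives require $\RR(M_i-L_i)\ge k$ or $\RR(N_i-M_{i-1})\ge k$. By Lemma~\ref{Lemma: helpful kneading facts}, the relevant quantities are $M_i-L_i=S_{Q(t_i-1)}$ or $S_{t_{i-1}}$, and the first of these is at most $S_B$. By Lemma~\ref{Lemma: helpful cutting facts}(3), $\RR(N_i-M_{i-1})=S_{Q(t_{i-1}+1)}\le S_B$. Hence for $k>S_B$, conditions \ref{Main 2} and \ref{Main 3} can only hold through the cases not bounded by $Q$. Since $Q(t_i)\le B$, we also get $N_{i+1}-M_{i+1}=S_{Q(t_i)}\le S_B$, so condition \ref{Main 1} fails for $k>S_B$ as well. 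The cutting returns that remain must therefore eventually use \ref{Main 2} with $M_{i+1}=N_i$.

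\textbf{Step 4: derive the contradiction.} The point of Step 3 is that, by Corollary~\ref{2nd Corollary to characterization}, persistent recurrence forces $\RR(M_i)$ or $\RR(M_{i+1})$ to be large. Whenever $\RR(M_{i+1})$ is large at a cutting return, item 4 of Lemma~\ref{Lemma: helpful kneading facts} ties it to $Q$: $\RR$ at a cutting return equals $S_{t}$, while the next gap $N-M=S_{Q(t)}\le S_B$. Combined with Step 1, this shows that at each cutting return one side of $\widetilde D_{M_i}$ has length $>\delta$ and the other has length at least $|c_{M_i}-c_{M_i-\tilde s}|$. It remains to show the latter is bounded below infinitely often. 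I would get this from the contraction principle: $c_{M_i-\tilde s(M_i)}$ maps, after $N_i-M_i\le S_B$ further steps, onto $c$-side points with bounded agreement. This gives $\RR(M_i-\tilde s(M_i))\le S_B$, and hence, via Lemma~\ref{Lemma: close to c}, that $c_{M_i-\tilde s(M_i)}$ lies outside a fixed neighbourhood $B(c,\epsilon_0)$. Since $c_{M_i}\to c$ along the subsequence where $\RR(M_i)$ is large, we obtain $r_{M_i-1}(c_1)\ge\min(\delta,\epsilon_0/2)$ infinitely often. This contradicts (C8).

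The main obstacle is the bookkeeping in Step 4. The identity $\RR(M_i-\tilde s(M_i))=N_i-M_i$, from Lemma~\ref{Lemma: helpful cutting facts}(2) applied with the roles of cutting and co-cutting swapped, must be matched to a $Q$-bounded quantity. Infinite recurrence is needed only to guarantee infinitely many returns with $\RR(M_i)$ large, so that $c_{M_i}$ approaches $c$.
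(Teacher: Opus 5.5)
You have a genuine gap: the argument is run at cutting return times, where it cannot work. For every cutting time, $\RR(S_k)=S_{k+1}-S_k=S_{Q(k+1)}\le S_B$. So when $Q$ is bounded, all points $c_{S_k}$ stay outside a fixed neighbourhood of $c$ by Lemma~\ref{Lemma: close to c}. Hence there is no subsequence of cutting returns with $\RR(M_i)\to\infty$, and the claim ``$c_{M_i}\to c$ along cutting returns'' on which your Steps~2 and~4 rest is false under the hypotheses.

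Step~4 cannot be rescued at cutting returns either. At a cutting time $n$ we have $c\in D_n=(c_n,c_{n-s(n)})$, so $c_n$ and $c_{n-\tilde s(n)}$ lie on the same side of $c$. Your (correct) bound $\RR(M_i-\tilde s(M_i))=N_i-M_i\le S_B$ only says that both points lie outside $B(c,\epsilon_0)$, which gives no lower bound on $|c_n-c_{n-\tilde s(n)}|$. For the same reason the conclusion of Step~3 is wrong: once $k>S_B$, a cutting return can never satisfy alternative (2), since that alternative requires $\RR(M_i)\ge k$.

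The missing observation is that bounded $Q$ forces all close returns of $c$ to occur at co-cutting times, and there the remaining endpoint is controlled by $Q$.
\begin{itemize}
\item Closest returns occur at cutting or co-cutting times, and $\RR$ is bounded on cutting times. So recurrence gives co-cutting times $n_i$ with $\RR(n_i)\to\infty$.
\item For such $n_i$, $n_i-\tilde s(n_i)=\widetilde S_l-\widetilde S_{l-1}=S_{\widetilde Q(l)}$ is a cutting time. Hence $\RR(n_i-\tilde s(n_i))\le S_B$ and $c_{n_i-\tilde s(n_i)}\notin B(c,\epsilon_0)$.
\item Your Step~1 handles the other endpoint $c_{n_i-s(n_i)}$, so $r_{n_i-1}(c_1)\ge\min(\delta,\epsilon_0/2)$ for all large $i$.
\end{itemize}
This is exactly the paper's proof.

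Alternatively, Step~3 can be completed correctly for $k>S_B$:
\begin{itemize}
\item (1) fails, since $N_i-M_i\le S_B$.
\item (3) fails, since $N_i-M_{i-1}$ is a cutting time and so $\RR(N_i-M_{i-1})\le S_B$.
\item (2) fails at cutting returns because $\RR(M_i)\le S_B$, and at co-cutting returns because $M_i-L_i=M_i-\tilde s(M_i)$ is a cutting time.
\end{itemize}
Then Theorem~\ref{Theorem: Main Symbolic Characterization} gives the result directly.
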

        \begin{proof}
            Suppose that $c$ is infinite recurrent and that $Q(k)$ is bounded. Let $(n_i)$ be a sequence of co-cutting times such that $\RR(n_i)\to\infty$. Fix $\epsilon > 0$ and let $U = (c-\epsilon, c+\epsilon)$. Since $Q$ is bounded, let $B=\sup_k Q(k)<\infty$. Choose $\epsilon>0$ small
            enough so that the corresponding $M$ from Lemma~\ref{Lemma: close to c}
            satisfies \(M>S_B\).
            Then $M>S_{Q(k)}$ for all $k\in \N$.
             Because $\RR(n_i)\to \infty$ there exists some $I\in \N$ such that for all $i\geq I$, $\RR(n_i) > M$. Observe that $\RR(n_i - s(n_i)) < M$ for all $i\in\N$, and thus $c_{n_i-s(n_i)}\notin U$ for any $i$. If $c$ is persistently recurrent, then it must follow that for all $i$ large enough, $c_{n_i - \tilde{s}(n_i)}\in U$. Observe that because $n_i$ is a co-cutting time, then $n_i - \tilde{s}(n_i) = S_t$ for some $t\in \N$ (see Lemma~\ref{Lemma: helpful cutting facts}). Thus, $\RR(n_i - \tilde{s}(n_i)) = \RR(S_t) = S_{t+1}-S_t = S_{Q(t)} < M$. We thus conclude that $c_{n_i - \tilde{s}(n_i)}\notin U$. Therefore $c$ is not persistently recurrent.
        \end{proof}

In the introduction of \cite{FPEP}, the authors state that when $c$ is infinitely recurrent and longbranched, then $\mathcal{F}\neq \mathcal{E}$ can be empty, countable or uncountable. Recall that $\mathcal{F} = \mathcal{E}$ if and only if $c$ is persistently recurrent. Thus $\mathcal{F}\setminus \mathcal{E}=\emptyset$ is not possible as the following corollary shows. %\Jernej{Wasn't there an example in some Henk's paper where $c$ is infinitely recurrent and longbranched but $\mathcal{F}\setminus \mathcal{E}=\emptyset$?}.    

	\begin{corollary}
		If $Q(k)$ is bounded and $c$ is infinite recurrent, then $\mathcal{F}\neq \mathcal{E}$. 
	\end{corollary}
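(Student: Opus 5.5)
This corollary follows by combining Theorem~\ref{thm:longbranched not persistently recurrent} with Theorem~\ref{Theorem: fixed FPEP endpoint characterization}. Assume $Q(k)$ is bounded and $c$ is infinitely recurrent. Theorem~\ref{thm:longbranched not persistently recurrent} then gives that $c$ is not persistently recurrent, so $c$ is reluctantly recurrent. Theorem~\ref{Theorem: fixed FPEP endpoint characterization} says that $\mathcal{F}=\mathcal{E}$ holds exactly when $c$ is persistently recurrent. Hence $\mathcal{F}\neq\mathcal{E}$, which is the claim.

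One hypothesis of Theorem~\ref{Theorem: fixed FPEP endpoint characterization} needs checking. That theorem is stated for the core inverse limit of a tent map $T$, whereas the corollary is phrased in terms of the kneading map $Q$ of a unimodal map. There are two ways to handle this.
\begin{itemize}
\item Interpret the corollary, as in \cite{FPEP}, for the tent map with the same kneading sequence. Since $c$ is recurrent and non-periodic, the kneading sequence is infinite and shift-maximal. For slopes in $(1,2]$ it is realized by a tent map $T_s$ with the same cutting times, hence the same $Q$.
\item Alternatively, note that persistent recurrence was characterized purely symbolically in Theorem~\ref{Theorem: Main Symbolic Characterization}. So the conclusion of Theorem~\ref{thm:longbranched not persistently recurrent} depends only on the kneading data and transfers to $T_s$.
\end{itemize}
The hypothesis $c_2<c<c_1$ is part of the standing assumptions, so the core $[c_2,c_1]$ is well defined.

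It is also worth making the conclusion explicit. The first half of the proof of Theorem~\ref{Theorem: fixed FPEP endpoint characterization} constructs the needed point directly. Reluctant recurrence yields a backward orbit $\overline{x}$ in $\omega(c)$ along which a fixed ball $B(x_0,\delta/3)$ has an infinite monotone pull-back. This $\overline{x}$ is a folding point lying in the interior of an arc, so it belongs to $\mathcal{F}\setminus\mathcal{E}$. In particular $\mathcal{F}\setminus\mathcal{E}$ is nonempty, which corrects the statement in \cite{FPEP} that this difference might be empty.

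The only potential obstacle is the transfer between a general unimodal map and the tent-map setting, i.e.\ confirming that both persistent recurrence and the boundedness of $Q$ are invariants of the kneading sequence. Both are, since persistent recurrence is determined by the conditions in Theorem~\ref{Theorem: Main Symbolic Characterization} and $Q$ is determined by the cutting times. Beyond this, the proof is a direct chaining of the two earlier theorems.
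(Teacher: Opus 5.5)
Your argument is correct and matches the paper's. The paper obtains the corollary by chaining Theorem~\ref{thm:longbranched not persistently recurrent} (bounded $Q$ plus infinite recurrence rules out persistent recurrence) with the reluctant-recurrence direction of Theorem~\ref{Theorem: fixed FPEP endpoint characterization}.

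The transfer discussion is unnecessary, since $\mathcal{F}$ and $\mathcal{E}$ are only defined for the tent-map core inverse limit, so the hypotheses already concern $T_s$ itself. You should also drop the claim that every infinite shift-maximal kneading sequence is realized by some $T_s$ with $s\in(1,2]$. It fails, for example, for sequences renormalizable with period $3$, which can still be infinitely recurrent with bounded $Q$.
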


We now present two examples of unimodal maps $f$ such that $Q(k)$ is bounded and $\widetilde{Q}(k)\to \infty$. We note that in both cases $c$ is recurrent but not uniformly recurrent.

% \begin{lemma}
% 	Let $c$ be infinite recurrent and persistently recurrent. If $Q(k)$ is bounded, then $\widetilde{Q}(k)$ is unbounded.
% \end{lemma}
% \begin{proof}

% 	If $Q(k)$ is bounded, then $n-s(n)$ is bounded. Fix $\epsilon >0$ such that $|c-c_t|>\epsilon$ for all $t \leq \max\{n-s(n): n\in \N\}$.  Recall that $\widetilde{D}_n = [c_{n-\widetilde{s}(n)}; c_{n-s(n)}]$ is the image of the maximal monotone lap containing $c_1$ under $f^{n-1}$. Consider the sequence $(n_i)$ defined by $|c_{n_i}-c|<\epsilon$. As $c$ is persistently recurrent, there exists an $N$ such that for all $n_i\geq N$ we have $\widetilde{D}_{n_i}\not\supseteq B(c,\epsilon)$ and thus $|c_{n_i - \tilde{s}(n_i)}-c| < \epsilon$. Because $\epsilon$ was chosen arbitrarily, we must have that some subsequence $ c_{n_{i_j} - \widetilde{s}(n_{i_j})} \to c$ as $j \to \infty$. Hence $n - \widetilde{s}(n)$ is unbounded, which means that $\widetilde{Q}(k)$ is unbounded.
% \end{proof}

\begin{example}\label{Example: co-kneading goes to infinity not uniformly recurrent 1}
	There exists a unimodal map $f$ such that $Q(k)$ is bounded and $\widetilde{Q}(k) \to \infty$.  Further, in this example, $c$ is not uniformly recurrent.
	
	Let \begin{align*} \mathcal{K}(f) = &~ 1.0.0.1'1.0.1'1.0.0.11.1'1.0.0.11. 0.10'1.0.0. \\
		&~ 11. 0. 11. 1'1. 0.0.11. 0. 11. 0.0.11. 11. 1'1.0.0.\\
		&~ 11. 0. 11. 0.0.11.11.0.0.11.0.101.1'1.0.0.11. \cdots\\	
	\end{align*} We note that $Q(k) \leq 2$ for all $k\in \N$. This kneading sequence is generated by concatenating the words $0$, $11$, and $101$ so that $Q(k)$ remains bounded, but $\widetilde{Q}(k) \to \infty$. Here $\widetilde{Q}(1) = 2, \widetilde{Q}(2) = 4, \widetilde{Q}(3) = 5, \widetilde{Q}(4) =  6, \widetilde{Q}(5) = 10, \widetilde{Q}(6) = 15, \cdots$. In this construction, the sequence after each co-cut agrees with the initial kneading sequence until the `next' block of the form `$.0.0.$' which is then replaced with `.11.'  We note that in the kneading sequence, arbitrarily long blocks of $1$s will appear, and therefore $c$ will not be uniformly recurrent. %As such, it follows that $c$ is not persistently recurrent.  Thus, this example also demonstrates that $\widetilde{Q}(k)\to \infty$ does not imply that $c$ is persistently recurrent. 
\end{example}

\begin{example}\label{Example: co-kneading goes to infinity not uniformly recurrent 2}
 Let $f$ be the unimodal map with 
 \begin{align*}
 	\mathcal{K}(f) =&~ 1. 0. 0. 1'1. 1'0'1. 1'0'0'10. 0. 10'1. 0. 0. 11. 100'10. 0. 11. 101.  10010. 1'1.\\ &~ 0. 0. 11. 101. 10010.0.101.0.1'1.0.0.11.101. 10010.0.101.0.0.11. \\ &~ 10010.1'1.0.0.11.101.10010. 0.101.0.0.11.10010.0.11.101.10010.\\
    &~11.0.1'1.0.0.11. 101.10010.0.101.0.0.11.10010.0.11.101.10010.11.\\
    &~ 0.0.11.101.10010.1'1.\cdots
 \end{align*} 
 
 Here the set of concatenated words is $W = \{0, 11, 101, 10010\}$, and $\mathcal{K}(f)$ is constructed in such a way that eventually after every co-cutting time, the sequence agrees with the kneading sequence up to the `next' $10010.0$ or $0.0$, and the second $0$ word is replaced with $11$.  In this way, $\widetilde{Q}(k) \to \infty$ while $Q(k)$ remains bounded.  Additionally, because the word $(0.11)^n$ will appear in $\mathcal{K}(f)$ for arbitrarily large $n$ (each occurrence of $11.0.11.0.0.$ will lead to a later occurrence of $11.0.11.0.11.0.0$), this example is not uniformly recurrent.
\end{example}

\subsection{Sturmian kneading sequences}\label{subsection: Sturmian}

We recall that Examples~\ref{Example: co-kneading goes to infinity not uniformly recurrent 1} and \ref{Example: co-kneading goes to infinity not uniformly recurrent 2} were both longbranched with $\widetilde{Q}\to \infty$, and neither example was uniformly recurrent. In this subsection we will define kneading sequences that appear in the tent map family, are longbranched, have $\widetilde{Q}(k)\to\infty$, and whose turning points are uniformly recurrent.

\begin{comment}
\begin{question}
     If $Q$ is bounded and $\widetilde{Q}\to \infty$, is it possible for $c$ to be uniformly recurrent?
\end{question}
\end{comment}

 Sturmian kneading sequences can be defined through irrational rotations on the circle as follows (see also \cite{Outershark}).
Let $f:I \to I$ be a symmetric unimodal map, i.e.,
given the involution $\tilde x:= 1-x$, we assume that
$f(\tilde x) = f(x)$ for every $x$.
This means that the critical point $c = \frac12$, and by an appropriate scaling, we can assume that $f(c) = 1$.
For instance, $f_a(x) = 1-a(x-\frac12)^2$ with $a \in (0,4]$ is the logistic family in this scaling.

There is a natural way to turn $f$ into an increasing symmetric Lorenz map  $\varphi:I \to I$ by flipping the right half of the graph vertically 
around $c = \frac12$, giving:
$$
\varphi(x) = \begin{cases}
              f(x) & \text{ if } x \in [0,c],\\
              \widetilde{f(x)} & \text{ if } x \in (c,1].
             \end{cases}
$$

Now we turn $\varphi$ into a proper circle endomorphism (with unique rotation number independent of $x \in \mathbb{S}^1$)
by setting:
$$
\bar\varphi(x) = \begin{cases}
               \varphi(1)=\widetilde{f(1)}, & x \in [0,a];
               \text{ where } a < c \text{ is such that } \varphi(a)=\varphi(1),\\
               \varphi(x), & \text{otherwise.}
              \end{cases}             
$$
Also let $b > c$ be such that $\varphi(b) = a$, see Figure~\ref{fig:barLorenz}.

\begin{figure}[ht]
\begin{center}
\begin{tikzpicture}[scale=0.9]
\draw[-, draw=black] (0,0) -- (4,0) -- (4,4) -- (0,4) -- (0,0) -- (4,4); 
% \draw[-, draw=black] (6,0) -- (10,0) -- (10,4) -- (6,4) -- (6,0);
 \draw[-, thick] (0,3.5) -- (1.05, 3.5);
 \draw[-, thick] (1.05,3.5) .. controls (1.4, 3.9) .. (2,4); 
\draw[-] (1.05,0) -- (1.05,0.1); 
\draw[-] (3.2,0) -- (3.2,0.1); 
\draw[dashed] (3.2,0) -- (3.2,1.05)--(1.05,1.05)--(1.05,0); 
 \draw[-, thick] (4,3.5) .. controls (3, 0.2) .. (2,0);
%\draw[-, thick] (6,0.5) .. controls (7, 3.8) .. (8,4); \draw[-, thick] (10,3.5) .. controls (9, 0.2) .. (8,0); 
\node at (2,-0.2) {\tiny $c=\frac12$}; \node at (1.05,-0.2) {\tiny $a$}; \node at (-0.35,3.5) {\tiny $\varphi(1)$};  
\node at (1.6, 3.5){$\bar \varphi$}; \node at (3.2,-0.2) {\tiny $b$};
\end{tikzpicture}
\caption{\label{fig:barLorenz} A stunted symmetric Lorenz map $\bar{\varphi}$ as a circle endomorphism.}
\end{center}
\end{figure} 

%The circle endomorphism $\bar{\varphi}$ obtained from $\varphi$ was already studied in the last section of \cite{bruinThesis}.
The following is a corrected version of Proposition 2 from \cite{Outershark}, which also appears in the errata for \cite{BruinBook}.

\begin{proposition}\label{prop:rotnumber}
Assume that $f$ is a unimodal map with cutting times $\{ S_j\}_{j \geq 0}$.
Let $b > c$ be such that $\bar{\varphi}(b)=a$, see Figure~\ref{fig:barLorenz}.
Then the rotation number of the corresponding $\bar\varphi$ equals
 $$
 \alpha = \begin{cases}
         \frac{k}{S_k} \in [\frac12, 1] \cap \mathbb{Q} & \text{ if $k$ is minimal such that } 
         f^{S_k}(c) \in (\tilde b,b),\\[1mm]
         \lim_{k \to \infty} \frac{k}{S_k} \in [\frac12,1] & \text{ if no such $k$ exists.}
        \end{cases}
 $$
In the latter case, the kneading map $Q(j) \leq 1$ for all $j \in \N$,
and if $\alpha \notin \mathbb{Q}$, then $f:\omega(c) \to \omega(c)$ is a non-invertible minimal surjection.
\end{proposition}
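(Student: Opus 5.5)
I will follow the standard route for circle maps of degree one. First, I will check that $\bar\varphi$ is a monotone (non-decreasing) degree-one circle endomorphism. Then its rotation number $\alpha$ exists and is independent of the starting point, and it equals the asymptotic frequency with which an orbit crosses a fixed reference point (say $c$, where $\varphi$ jumps) per iterate. I will compute this frequency along the orbit of $\varphi(1)$, or equivalently of $c_1$. The key link is symbolic. Under the flip $x\mapsto\tilde x$ on $(c,1]$, the $\bar\varphi$-orbit of a point outside the flat piece $[0,a]$ tracks the $f$-orbit up to the symmetry $\tilde{\ }$. Each passage through the flat piece $[0,a]$, or through its preimage $(\tilde b,b)$, corresponds to landing near $c$ in a way that restarts the orbit at $\varphi(1)$. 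I will then show, using the description $D_n=(c_n,c_{n-S_k})$ of the Hofbauer tower, that the restart times are exactly the cutting times $S_k$. Between consecutive cutting times the orbit makes exactly one full turn: the branch from $c_{S_{k-1}}$ runs monotonically for $S_{Q(k)}$ steps and crosses $c$ once, at the cut. So after $S_k$ iterates the lifted orbit has advanced $k$ turns, which gives $\alpha=\lim k/S_k$.

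Next I will handle the finite case. If $k$ is minimal with $c_{S_k}\in(\tilde b,b)$, then $\bar\varphi$ maps this point into $[0,a]$ and then to $\varphi(1)$. The $\bar\varphi$-orbit of $\varphi(1)$ is therefore periodic with period $S_k$ and $k$ turns, so $\alpha=k/S_k$. The bounds in $[\frac12,1]$ come from $S_k\le 2k$ and $S_k\ge k$. To get these, I will use $S_j-S_{j-1}=S_{Q(j)}$ together with the fact that, absent an earlier hit of $(\tilde b,b)$, each step between cutting times is $1$ or $2$.

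For the infinite case, no $c_{S_k}$ ever enters $(\tilde b,b)$. I will argue that this forces $D_{S_k}$ to be long at every cut, which forces $S_{Q(k)}\le 2$, that is $Q(k)\le1$. For irrational $\alpha$, minimality of $f|_{\omega(c)}$ will come from semiconjugating $\omega(c)$ onto the rotation by $\alpha$ through the Poincaré map, which is monotone and collapses only gaps. Non-invertibility will come from the symmetry $f(x)=f(\tilde x)$: both $c$ and $\tilde c$ are identified, and more to the point, points of $\omega(c)$ near $c$ come in symmetric pairs $x,\tilde x\in\omega(c)$ with the same image. I expect to use the two-sided approach of the Sturmian orbit to $c$ to find such a pair, and then apply Theorem~\ref{Theorem:Alvin_homeo} to contradict homeomorphy. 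The bijection between cutting times and turns of the lifted orbit is the step I expect to be the main obstacle, since it needs careful bookkeeping of the flip on $(c,1]$.
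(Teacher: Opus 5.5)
Your plan for the rotation-number formula and for $Q\le 1$ is sound in outline. The paper does not reprove these: it takes the formula from \cite{Outershark} and gets non-invertibility by noting that $X_f$ violates the two-preimage condition of Theorem~\ref{Theorem:Alvin_homeo}.

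\textbf{The gap: non-invertibility.} The mechanism you propose is the opposite of what happens. In the case at hand the orbit of $c$ accumulates on $c$ from \emph{one} side only. Once $Q\le1$ is known, every non-cutting time has the form $n=S_{j-1}+1$ with $S_j=S_{j-1}+2$. For such $n$, $c\notin D_n=(c_n,c_1)$ forces $c_n>c$. Every cutting time satisfies $c_{S_k}\notin(\tilde b,b)$ by hypothesis. So no $c_n$ lies in $(\tilde b,c]$, and $\omega(c)\cap(\tilde b,c)=\emptyset$.

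Symbolically the same holds. The condition $e_n=0$ forces $n=S_k$, and then $\RR(S_k)=S_{k+1}-S_k=S_{Q(k+1)}\le 2$. Hence the word $010=0e_1e_2$ never occurs in $\nu$, and $0\nu\notin X_f$. One can check directly that $010$ does not occur in the Pell sequence displayed in the paper.

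So there is no two-sided approach to $c$, and no symmetric pair $x\neq\tilde x$ of $\omega(c)$ near $c$ to feed into Theorem~\ref{Theorem:Alvin_homeo}. Moreover, two-sided accumulation at $c$ would not prove non-invertibility anyway. It also occurs for embedded odometers, where $f|_{\omega(c)}$ is a homeomorphism.

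\textbf{The repair.} What the theorem needs is exactly this one-sidedness. The kneading sequence $\nu$ has only the preimage $1\nu$ in $X_f$. On the other hand, a one-sided subshift on which $\sigma$ is a homeomorphism is finite. So on the infinite minimal $X_f$ some $y\neq\nu$ has $0y,1y\in X_f$, and the condition fails with empty prefixes.

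Concretely one can take $y=\sigma^2(\nu)$. The endpoints $1\equiv0$ and $a$ of the plateau lie in the minimal Cantor set of $\bar\varphi$ and are both mapped to $\varphi(1)$. Under $f$ they go to the symmetric pair $c_2$ and $f(a)=\widetilde{c_2}$ in $\omega(c)$, which have the common image $c_3$. In the Pell example, $0\sigma^2(\nu)$ and $1\sigma^2(\nu)$ can be read off at positions $2$ and $14$.

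\textbf{Smaller repairs.}
\begin{enumerate}
\item The $\bar\varphi$-preimage of the plateau is $[c,b]$, not $(\tilde b,b)$; points of $(\tilde b,c)$ are sent to just below $1$. You therefore need that $e_1\cdots e_{S_k}$ contains an odd number of $1$s at every cutting time, so that $\varphi^{S_k}(c)>c$.
\item ``One turn per cutting interval'' can fail when $Q(k)\ge2$, since a turn occurs at time $n$ iff $e_1\cdots e_n$ has odd parity. So establish $Q(j)\le1$ for $j\le k$ first. This follows because $Q(j+1)\ge2$ gives $c_{S_j+1}>c>c_{S_j+2}$, hence $c_{S_j}\in(\tilde b,b)$.
\item For minimality, a semiconjugacy onto $R_\alpha$ does not by itself make an extension minimal. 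Instead use that $\omega_{\bar\varphi}(c)$ is the minimal set of $\bar\varphi$. The identity $f\circ\varphi=f\circ f$ then exhibits $f|_{\omega(c)}$ as a factor of it under $x\mapsto f(x)$.
\end{enumerate}
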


Namely, $f:\omega(c) \to \omega(c)$ is not a homeomorphism. The fault in reasoning in \cite{Outershark} arises in the last paragraph of the proof of Proposition 2 from \cite{Outershark} where the authors assert: 
\begin{quote}
We will show that $f:\omega_{\varphi}(c) \to \omega_f(c)$ is in fact a homeomorphism, from which it follows that  $f:\omega_{f}(c)\to\omega_f(c)$ is also a homeomorphism.
\end{quote}

However, from the fact that $f:\omega_{\varphi}(c) \to \omega_f(c)$ is a homeomorphism it need not follow that $f:\omega_{f}(c)\to\omega_f(c)$ is also a homeomorphism. Even more, this does not hold as Theorem~\ref{Theorem:Alvin_homeo} is not satisfied.

Furthermore, when the rotation number in Proposition~\ref{prop:rotnumber}
is $\alpha = \lim_{k\to\infty}k/S_k$, then $(\omega(c), f)$ represents a Sturmian shift.
Even more, every irrational rotation number (hence every Sturmian shift) can be realized this way as it is argued after the proof of Proposition~2 in \cite{Outershark}. Let us recall, correct and extend the explanation of this procedure since it will be important to understand it when determining the cutting and co-cutting times of the corresponding unimodal maps.

We can split any sequence $e \in \{ 0,1\}^\N$
into maximal pieces (up to the last symbol) that coincide with a prefix of $\nu$.
To this end, define
\begin{equation}\label{eq:rho-function}
\rho:\N \to \N, \quad \rho(n)=\max\{k>n : e_{n+1}e_{n+2}\cdots e_{k-1}
\text{ is a prefix of }\nu\}.
\end{equation}

That is, the function $\rho$ depends on $e$ and $\nu$, but we will suppress this dependence.
When we apply this for $e = \nu$, we obtain
$$
S_0 = 1, \quad S_{k+1} = \rho(S_k),
$$
or in other words $S_k = \rho^k(1)$ for $e=\nu$ and $k \geq 0$.

Let \(1-\alpha=[0;a_1,a_2,\ldots]\in(0,1/2]\setminus\mathbb Q\), with convergents \(p_i/q_i\). Since \(1-\alpha\le 1/2\), we have \(a_1\ge2\).
%Let $1-\alpha = [0;a_1,a_2,a_3,\dots]$ be the continued fraction expansion of $1-\alpha\in[\frac{1}{2},1]\setminus \mathbb{Q}$, with convergents $\frac{p_i}{q_i}$. Since $\alpha\in [1/2,1]$ we obtain $a_1=2$. 
In the rest of this paragraph we refer the reader to \cite[Chapter I, II]{Khinchin}.
Due to the property that the convergents are best approximates for the irrational rotation $R_\alpha$, it holds that the denominators $q_i$ are the times of
closest returns of any point $x \in \mathbb{S}^1$ to itself. Furthermore, these closest returns occur in an alternating fashion on the left and on the right of $x$.
Therefore, if we assume that $R_\alpha^{q_i}(x)$ is to the right of $x$, and let interval $A_{q_i} = [x,R_\alpha^{q_i}(x)]$,
then the first iterate $k$ such that $x\in R_\alpha^k(A_{q_i})$ is $k = q_{i+1}$ and $R_\alpha^{q_{i+1}}(x)$ is to the left of $x$.

For the map $\bar\varphi$, the closest returns on the left indeed accumulate on $c$, but the right neighborhood
$[c,b)$ is the preimage of the plateau $[0,a)$ and no further iterates of $c$ enter that region.
Instead, returns on the left accumulate on $b$.

Translating this back to the unimodal map $f$ with kneading sequence $\nu = \nu_1\nu_2\nu_3\dots$, 
the closest returns on the left correspond to
closest returns at co-cutting times Indeed, recall that in the irrational case of Proposition~\ref{prop:rotnumber}, no cutting time \(S_j\) satisfies \(f^{S_j}(c)\in(\tilde b,b)\).
Let $q_i$ be such a co-cutting time. Then it follows from the definition of \(\rho\) in \eqref{eq:rho-function},
together with the description of closest returns for irrational rotations,
that the co-cutting times are generated from the denominators \(q_i\) of the
convergents of \(1-\alpha=[0;a_1,a_2,\ldots]\) as follows.
Recall that the convergents satisfy the recurrence
\[
q_{i+2} = a_{i+1}q_{i+1} + q_i.
\]
If \(q_i\) is a co-cutting time, then the successive co-cutting times obtained
from \(q_i\) are given by the iterates of \(\rho\):
\begin{equation}\label{eq:rhoa}
\rho^a(q_i) = q_i + a q_{i+1}, \qquad 1 \le a \le a_{i+1}.
\end{equation}
In particular, the last co-cutting time in this block satisfies
\begin{equation}\label{eq:rhoa+1}
\rho^{a_{i+1}}(q_i) = q_i + a_{i+1}q_{i+1} = q_{i+2}.
\end{equation}

Note that in \cite{Outershark} the authors started with $\alpha = [0;a_1,a_2,a_3,\dots]\in [0,\frac{1}{2}]$. However, since then necessarily $a_1=1$, it would imply that $q_0=0$ and $q_1=1$; by using the above procedure this would not produce a valid kneading sequence. To correct this statement the procedure needs to start with $\alpha=\max\{\tilde{\alpha},1-\tilde{\alpha}\}$, where we denote $\alpha = [0;a_1,a_2,a_3,\dots]\in [0,1]$.

\begin{proposition}\label{prop:Sturmian}
Let $\nu$ be a Sturmian kneading sequence. Then $\widetilde{Q}(k)\to \infty$ and $c$ is uniformly recurrent.
\end{proposition}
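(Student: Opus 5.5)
The plan is to read everything off the rotation picture set up just before the statement. A Sturmian kneading sequence $\nu$ arises, via Proposition~\ref{prop:rotnumber}, from the stunted Lorenz circle map $\bar\varphi$ with irrational rotation number $\alpha$. On $\omega(c)$ this map is semiconjugate to the irrational rotation $R_\alpha$, and the symbolic coding of the critical orbit is the coding of a rotation orbit by a two-interval partition.

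\textbf{Uniform recurrence.} For $c$ we argue directly from the coding. Every prefix $\nu_1\cdots\nu_n$ of $\nu$ corresponds to a cylinder. Via the semiconjugacy this cylinder is an arc of positive length of the circle containing the rotation point that codes $c_1$ (or having it as an endpoint, in which case we use a one-sided arc). By minimality of the irrational rotation, every orbit enters an arc of length $\ell$ with gaps bounded by some $G(\ell)$. Hence each prefix reappears in $\nu$ with bounded gaps, so $\nu$ is a minimal sequence. By the characterization recalled in the definition of uniform recurrence, $c$ is then uniformly recurrent. Equivalently, one can say that Sturmian sequences are minimal. The only care needed is that the point coding $c_1$ is not a discontinuity-preimage issue; this holds since $\alpha\notin\mathbb Q$ and no cutting time lands in $(\tilde b,b)$.

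\textbf{$\widetilde Q(k)\to\infty$.} We use the explicit description \eqref{eq:rhoa}--\eqref{eq:rhoa+1} of the co-cutting times. Within the block starting at a co-cutting time $q_i$, successive co-cutting times are $q_i+aq_{i+1}$ for $1\le a\le a_{i+1}$. So consecutive co-cutting times differ by exactly $q_{i+1}$, the block boundary included, since $\rho^{a_{i+1}}(q_i)=q_{i+2}$ starts the next block with step $q_{i+2}$. By Bruin's lemma the difference of consecutive co-cutting times is a cutting time, so $S_{\widetilde Q(l)}=q_{i+1}$ for every $l$ in the $i$-th block. Since the $q_i$ are strictly increasing to infinity ($q_{i+2}\ge q_{i+1}+q_i$) and $S$ is strictly increasing, $\widetilde Q(l)$ grows like the index $j$ with $S_j=q_{i+1}$, and that index tends to infinity with $i$. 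Each block is finite, of length $a_{i+1}$, so $i\to\infty$ as $l\to\infty$, giving $\widetilde Q(l)\to\infty$.

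\textbf{Main obstacle.} The difficult step is justifying \eqref{eq:rhoa}. We must show that the co-cutting times really are exactly the times $q_i+aq_{i+1}$, with no others, and that each $q_{i+1}$ is indeed a cutting time. We would do this by identifying co-cutting times with closest returns of $c$ from the left, accumulating on $c$ as described before the statement. This uses the best-approximation property of convergents: after a left return at time $q_i$, the next returns that cover $c$ from the appropriate side happen after adding $q_{i+1}$. That $q_{i+1}$ is a cutting time should follow because $R^{q_{i+1}}$ is the closest right return, which corresponds to a cutting time accumulating on $b$. If this identification proves delicate, a weaker fallback suffices for the first claim: we only need that gaps between consecutive co-cutting times are unbounded from below eventually, i.e.\ tend to infinity. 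This follows since a bounded gap $S_j$ recurring infinitely often would force a periodic rotation coding, contradicting irrationality of $\alpha$.
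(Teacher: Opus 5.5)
Your proposal follows the paper's proof: uniform recurrence comes from minimality of the Sturmian (rotation) coding, and $\widetilde Q(l)\to\infty$ follows from \eqref{eq:rhoa}--\eqref{eq:rhoa+1}, which the paper also takes from the preceding construction rather than re-proving; these make every gap between consecutive co-cutting times in the block starting at $q_i$ equal to the cutting time $q_{i+1}\to\infty$. Two minor points: the next block, starting at $q_{i+2}$, has step $q_{i+3}$ rather than $q_{i+2}$ (in the Pell example $29\to99\to169$ with step $70=q_5$), and your fallback is only heuristic, since a bounded gap recurring infinitely often does not by itself force a periodic coding, but your main argument does not need it.
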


\begin{proof}
Let \(q_i\) be a co-cutting time. By the construction above, the successive
co-cutting times generated from \(q_i\) are
\[
\rho^a(q_i)=q_i+a q_{i+1},
\qquad 1\le a\le a_{i+1},
\]
and
\[
\rho^{a_{i+1}}(q_i)=q_{i+2}.
\]
Hence, within this block, the difference between consecutive co-cutting times is
\[
\rho^{a+1}(q_i)-\rho^a(q_i)=q_{i+1},
\qquad 1\le a<a_{i+1}.
\]
By the definition of the co-kneading map,
\[
S_{\widetilde Q(l)}
=
\widetilde S_l-\widetilde S_{l-1}.
\]
Thus each difference between consecutive co-cutting times is a cutting time.
In the block generated from \(q_i\), this cutting time is \(q_{i+1}\). Since
\(q_{i+1}\to\infty\), and since \(S_m\to\infty\) with \(m\), the corresponding
indices \(\widetilde Q(l)\) also tend to infinity. Therefore
\(
\widetilde Q(l)\to\infty .
\)
Finally, Sturmian subshifts are minimal. Hence every Sturmian sequence is
uniformly recurrent, so every initial block of \(\nu\) appears in \(\nu\) with
bounded gaps. By the symbolic criterion for uniform recurrence of the turning
point, \(c\) is uniformly recurrent.
\end{proof}

For example, if $\alpha=\sqrt{2}-1$, then $\alpha= [0;2,2,2,\ldots]$, i.e. $a_i=2$ for all $i\in \N$. Therefore, the denominators of convergents $q_i$s are the Pell numbers $1,2,5,12,29,70,169,\dots$. 
Since in general, $\kappa\geq 1$ it follows that $2$ is a cutting time and thus $q_2=5=\widetilde{S}_2$ is a co-cutting time. 

Since \(q_2=5\) is a co-cutting time, equations~\eqref{eq:rhoa} and~\eqref{eq:rhoa+1} give
\[
\rho(5)=\widetilde S_3=17
\qquad\text{and}\qquad
\rho^2(5)=\widetilde S_4=29=q_4.
\]
Furthermore,
\[
\rho(29)=\widetilde S_5=99
\qquad\text{and}\qquad
\rho^2(29)=\widetilde S_6=169=q_6.
\]
Since \(\widetilde S_1=3\), we obtain
\[
\widetilde S_2-\widetilde S_1=2,\qquad
\widetilde S_3-\widetilde S_2=\widetilde S_4-\widetilde S_3=12,
\]
and
\[
\widetilde S_5-\widetilde S_4=\widetilde S_6-\widetilde S_5=70.
\]

\begin{comment}
\Jernej{Since \(q_2=5\) is a co-cutting time, equations \eqref{eq:rhoa} and
\eqref{eq:rhoa+1} give
\[
\rho(5)=17,\qquad \rho^2(5)=29=q_4.
\]}
\sout{Thus using equations \eqref{eq:rhoa} and \eqref{eq:rhoa+1} we obtain}
\[
\sout{\rho^1(2)=\tilde{S}_3=17} \text{ and } \sout{\rho^2(2)=\tilde{S}_4=29=q_4.}
\]
Furthermore, 
\[\rho^1(29)=\tilde{S}_5=99 \text{ and } \rho^2(29)=\tilde{S}_6=169=q_{6}.
\]
Since $\tilde S_1=3$ we get 
\[\tilde{S}_2-\tilde{S}_1=2, \tilde{S}_3-\tilde{S}_2=\tilde{S}_4-\tilde{S}_3=12 \text{ and } \tilde{S}_5-\tilde{S}_4=\tilde{S}_6-\tilde{S}_5=70.
\]
\end{comment}

Therefore, we obtain
$$
\nu = 1\boldsymbol{0}.1'1.\boldsymbol{1}'1.0.11.11.\boldsymbol{0}.11.11.1'1.0.11.11.0.11.11.\boldsymbol{1}'1.0\dots
$$
where dots indicate cutting times and primes co-cutting times. The bold symbols indicate the positions $q_i$; they are alternatingly cutting and co-cutting times as argued above. %Observe that $\tilde{Q}(i)\to \infty$.
In fact, if we focus only on the co-cutting times, because $q_i$ are denominators of the convergents for an irrational number it holds that
$$
 \nu_{q_{i+1}-q_i+1} \dots \nu_{q_{i+1}-1}\nu_{q_{i+1}} =
 \nu_1 \dots \nu_{q_i-1} \nu'_{q_i} 
 %\text{ or }   \nu_1 \dots \nu_{q_i-1} \nu'_{q_i} \qquad
 \text{ for each odd } i \in \N,
$$
and therefore $c$ has two limit itineraries $\lim_{x \nearrow c} i(x) = 0\nu$ and $\lim_{x \searrow c} i(x) =1\nu$. Thus the shift space generated by \(\nu\) does not satisfy the two-preimage condition in Theorem~\ref{Theorem:Alvin_homeo}. Hence \(f|_{\omega(c)}\) is not a minimal homeomorphism.
%\Jernej{Also check the last displayed equation! even vs. odd}
%\Lori{In the previous statement, it says `for all i' and `for all even i'. I am also confused by the upper and lower itineraries here; should we mention that only $1\nu$ appears in the shift space generated by the kneading sequence?}

%Now write that even all Sturmian in unimodal maps are represented like that and all of them appear in the tent map family.

%next insert a claim that turning point is long branched and uniformly recurrent

%next insert the example with Pell numbers 

%\Lori{We need to give Henk credit for his corrected version of the proof that he put in his errata. The question remains whether it is possible for a longbranched map with infinitely recurrent turning point to have a minimal homeomorphism.}

The final statement in Proposition 2 from \cite{Outershark} aimed to address the question whether there exists a longbrached unimodal map $f$ such that $f:\omega_{f}(c) \to \omega_f(c)$ is a minimal homeomorphism. Thus, Question~\ref{Question: longbranched} still remains open.

\begin{comment}
\begin{question}
Let $f$ be a longbranched (infinitely) recurrent unimodal map. Is it possible that $f:\omega_{f}(c) \to \omega_f(c)$ is a minimal homeomorphism?
\end{question}
\end{comment}

% Observe that in Examples~\ref{Example: co-kneading goes to infinity not uniformly recurrent 1} and \ref{Example: co-kneading goes to infinity not uniformly recurrent 2}, we have that $f$ is longbranched, $\widetilde{Q}(k)$ is unbounded, $c$ is not uniformly recurrent, and we get arbitrarily long repeated blocks in the kneading sequence. This leads to the following question.

% \begin{question}
%     If $Q(k)$ is bounded and $\widetilde{Q}$ is unbounded, will $c$ be uniformly recurrent if and only if there is no word $w = w_1w_2w_3\cdots w_j$ such that $w^n$ appears in $\mathcal{K}(f)$ for arbitrarily large $n$?
% \end{question}
% %
%
%
%
%
%
\subsection{Existence of a longbranched map with regularly recurrent turning point}
We have previously mentioned that there exist unimodal maps that are longbranched and whose turning points are uniformly recurrent. We now show that it is also possible for a longbranched unimodal map to have a regularly recurrent turning point.

\begin{example}\label{Example: reg rec and longbranched}
    Let $A_1 = 1.0.0.11.1$ and $B_1 = 1.0.101.1$. For all $n\geq 1$, define $A_{n+1} = A_nA_nB_n$ and $B_{n+1} = A_nB_nB_n$. 
\end{example}

\begin{lemma} 
    $A_k$ and $B_k$ have even parity for all $k$. Additionally, $A_k$ and $B_k$ differ in only the $|A_{k-1}|+|A_{k-2}| + \cdots + |A_1| + 3$ and $|A_{k-1}|+|A_{k-2}| + \cdots + |A_1| + 4$ positions and $A_k \succ B_k$ for all $k \geq 1$.
\end{lemma}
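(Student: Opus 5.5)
We will prove all three assertions together by induction on $k$, treating $A_k$ and $B_k$ as words in $\{0,1\}$; the dots only record cutting times and are ignored. Alongside the three assertions we carry the auxiliary fact $|A_k|=|B_k|=6\cdot 3^{k-1}$. This follows at once from $|A_1|=|B_1|=6$ and the recursion, since each of $A_{k+1}=A_kA_kB_k$ and $B_{k+1}=A_kB_kB_k$ is a concatenation of three words of common length $|A_k|$. Equal lengths are what let us compare $A_{k+1}$ and $B_{k+1}$ block by block.

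\emph{Base case.} Stripping the dots gives $A_1=100111$ and $B_1=101011$. Each word contains four $1$s, so both have even parity. They agree except in positions $3$ and $4$, which matches the formula $|A_{0}|+\cdots+3$ and $\cdots+4$ when the sum is empty. At position $3$ the preceding block is $10$, which contains one $1$ and so has odd parity. In the parity lexicographical ordering this means $1\prec 0$ at that position. Since $A_1$ has $0$ there and $B_1$ has $1$, we get $A_1\succ B_1$.

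\emph{Inductive step.} Assume the three assertions hold for $k$.

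Parity: the parity of a concatenation is the sum of the parities of its blocks. Hence $A_{k+1}=A_kA_kB_k$ and $B_{k+1}=A_kB_kB_k$ both have even parity, since each block does.

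Difference positions: compare the two words blockwise. The first blocks are both $A_k$ and the third blocks are both $B_k$, so these agree. The middle blocks are $A_k$ and $B_k$, which by hypothesis differ exactly at positions $|A_{k-1}|+\cdots+|A_1|+3$ and $+4$. Shifting by the length $|A_k|$ of the first block, $A_{k+1}$ and $B_{k+1}$ differ exactly at positions $|A_k|+|A_{k-1}|+\cdots+|A_1|+3$ and $|A_k|+\cdots+|A_1|+4$, as claimed.

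Ordering: let $d$ be the first position where $A_k$ and $B_k$ differ, so the first difference of $A_{k+1}$ and $B_{k+1}$ is at $|A_k|+d$. The number of $1$s before that position equals the number of $1$s in $A_k$ (even) plus the number in the common prefix $e_1\cdots e_{d-1}$ of $A_k$ and $B_k$. So it has the same parity as in the comparison of $A_k$ with $B_k$. The symbols being compared are also the same, so the parity-lexicographic comparison at that position gives the same result as before, and $A_{k+1}\succ B_{k+1}$ follows from $A_k\succ B_k$.

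The only delicate point is the ordering step. It works because the common first block $A_k$ has even parity, so placing it in front does not flip the orientation of the comparison. The rest is bookkeeping of lengths and positions.
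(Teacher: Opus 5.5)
Your proof is correct and follows essentially the same route as the paper: induction over the concatenations $A_{k+1}=A_kA_kB_k$ and $B_{k+1}=A_kB_kB_k$ gives the parity and difference-position claims, and the even parity of the common leading block leaves the parity-lexicographic comparison at the first differing position unchanged. The only difference is in the ordering step, which the paper does by writing out the common prefix $A_{k-1}A_{k-2}\cdots A_1\,10$ (odd parity) directly rather than carrying the comparison inductively as you do; the underlying argument is the same.
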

\begin{proof}
    Both $A_1$ and $B_1$ clearly have even parity. Observe that for every $k\geq 1$, $A_{k+1} = A_k A_kB_k$ and $B_{k+1} = A_kB_kB_k$. Hence it inductively holds that both $A_k$ and $B_k$ have even parity for every $k\geq 1$.

    Note that $A_1$ and $B_1$ differ only in the third and fourth positions. Because $A_2 = A_1A_1B_1$ and $B_2 = A_1B_1B_1$, $A_2$ and $B_2$ differ in the third and fourth positions of the central block. This occurs in positions $|A_1| + 3$ and $|A_1| + 4$. Suppose that $A_k$ and $B_k$ differ in only the $|A_{k-1}|+|A_{k-2}| + \cdots |A_1| + 3$ and $|A_{k-1}|+|A_{k-2}| + \cdots + |A_1| + 4$ positions.  Then $A_{k+1} = A_kA_kB_k$ and $B_{k+1}=A_kB_kB_k$ differ only in the central block in the position where $A_k$ and $B_k$ differ. This is precisely the positions $|A_k| + |A_{k-1}|+|A_{k-2}| + \cdots |A_1| + 3$ and $|A_k| + |A_{k-1}|+|A_{k-2}| + \cdots |A_1| + 4$. By induction, the pattern holds.

    It is easy to check that $A_1 \succ B_1$. This is because $10$ has odd parity, so $100 \succ 101$. By combining the facts that $A_k$ and $B_k$ differ first in the $|A_{k-1}|+|A_{k-2}| + \cdots +|A_1| + 3$ position and that each $A_j$ has even parity, observe that $A_k$ and $B_k$ agree on the entire block $A_kA_{k-1}A_{k-2}\cdots A_1~10$, which has odd parity, and disagree in the next position. Since the next position of $A_k$ is a $0$, it follows that $A_k \succ B_k$.
\end{proof}

\begin{lemma} $A_nC$ is shift-maximal for all $n\in \N$.
\end{lemma}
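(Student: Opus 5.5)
The proof will go by induction on $n$. It works at two scales at once: individual symbols of $\{0,1\}$, and the $6$-letter blocks $A_1=100111$ and $B_1=101011$. These are the blocks from which every $A_n$ and $B_n$ is concatenated.

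\textbf{Step 1 (block level).} By the preceding lemma every $A_k$ and $B_k$ has even parity, so in particular $A_1$ and $B_1$ do. Hence if two words are concatenations of $A_1$'s and $B_1$'s, their parity-lexicographic comparison reduces to the ordinary lexicographic comparison of the corresponding words over the alphabet $\{A,B\}$ with $A\succ B$. This uses that $A_1\succ B_1$ and that the first disagreement lies inside a single block, preceded by an even number of $1$s. Writing $\tau(A)=AAB$ and $\tau(B)=ABB$, we have $A_n=\tau^{n-1}(A)$ read in blocks. So it suffices to show that every suffix of $\tau^{n-1}(A)$ that starts at a block boundary, followed by $C$, is $\preceq \tau^{n-1}(A)C$. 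Here the $C$ comparison must be handled with the parity rule $0\prec C\prec 1$ at even parity.

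\textbf{Step 2 (induction on $n$ for block shifts).} The map $\tau$ is order preserving in the lexicographic order: $\tau(A)$ and $\tau(B)$ have equal length, and $\tau(A)\succ\tau(B)$ with first difference at the second letter. So a suffix of $\tau^{n-1}(A)$ beginning at a $\tau$-image boundary is $\tau$ of a suffix of $\tau^{n-2}(A)$, and it is dominated by the induction hypothesis. The remaining suffixes begin at offsets $1$ or $2$ inside some $\tau(A)$ or $\tau(B)$, so they start with one of $AB\ldots$, $B\ldots$, $BB\ldots$. Each of these is $\prec AAB\ldots$ immediately, because $\tau^{n-1}(A)$ begins with $AAB$ for $n\ge 2$ and $B\prec A$.

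\textbf{Step 3 (symbol shifts inside a block and the terminal $C$).} For shifts by $1,\dots,5$ positions inside $A_1$ or $B_1$, I will check the finitely many cases directly against the prefix $100111$ of $A_n$.
\begin{itemize}
\item A suffix starting with $0$ is smaller immediately.
\item A suffix starting with $11$ loses at the second position, since after one $1$ the parity is odd and so $1\prec 0$.
\item A suffix starting with $10$ is a shift of $A_1$ or $B_1$ beginning at $1011$ or $10|1\ldots$. Comparing with $1001\ldots$ at the third position, where the parity is odd, gives $1\prec 0$.
\end{itemize}
In all cases the comparison is decided within a few symbols, possibly running into the next block, which always starts with $10$. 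Finally, a suffix that reaches the terminal $C$ agrees with a prefix of $A_n$ up to $C$. There, $C$ is compared with the symbol of $A_n$ in that position, and the parity rule gives the required inequality.

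\textbf{Main obstacle.} I expect the main obstacle to be this last $C$-comparison, together with the boundary cases in Step 3 where a shifted suffix agrees with the prefix across a block boundary. For these I will track the parity of the agreeing prefix using the even-parity property of all $A_k$ and $B_k$ from the preceding lemma.
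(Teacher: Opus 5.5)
Your argument is correct, but it is organized differently from the paper's.

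\textbf{The paper's route.} The paper works entirely at the symbol level and inducts on $A_{k+1}C=A_kA_kB_kC$. It carries two auxiliary hypotheses: $A_k\succ\sigma^j(A_k)$ for $1\le j\le |A_k|-2$, and $A_kC\succ\sigma^j(B_kC)$ for $0\le j\le |A_k|$. It then sorts the shifts into three groups:
\begin{itemize}
\item shifts landing inside the first or second copy of $A_k$;
\item the three seam shifts $j=|A_k|-1,\ |A_k|,\ 2|A_k|-1$, handled by ``begins with $11$'' and by $A_kA_k\succ A_kB_k$;
\item shifts landing inside $B_kC$.
\end{itemize}

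\textbf{Your route.} You recode into the $6$-letter blocks $A_1=100111$ and $B_1=101011$. Their even parity turns block-aligned comparisons into plain lexicographic comparisons over $\{A,B\}$. You then run the induction on the substitution $\tau(A)=AAB$, $\tau(B)=ABB$, using that $\tau$ preserves order. The non-aligned shifts become a single finite check against the prefix $100$ of $A_n$, and each is decided within three symbols.

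\textbf{Comparison.} What your route buys is that the induction hypothesis reproduces itself automatically. The paper's auxiliary inequalities must be re-established at level $k+1$ for its induction to continue, a step the paper leaves implicit. In exchange, the paper's route needs no recoding and stays with the recursion as defined.

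\textbf{Two small remarks.} First, the case ``$10|1\ldots$'' in your Step 3 never occurs. Both blocks end in $11$, so the only non-aligned suffix beginning with $10$ is the offset-$2$ suffix $1011\ldots$ of $B_1$. Second, the terminal $C$ is not a real obstacle. For a block-aligned suffix that runs into it, $C$ is compared with the leading $1$ of a block at even parity, so $C\prec 1$. The only other case is $1C$ against $10$, where $C\prec 0$ at odd parity.
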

\begin{proof}
Observe that $A_1C$ is shift-maximal, $A_1 \succ \sigma^j(A_1)$ for all $1\leq j\leq |A_1|-2$, and $A_1C\succ \sigma^j(B_1C)$ for all $0\leq j \leq |A_1|$.

Suppose that for some $k\geq 1$ we have that $A_kC$ is shift-maximal, $A_k \succ \sigma^j(A_k)$ for all $1\leq j\leq |A_k|-2$, and $A_kC \succ \sigma^j(B_kC)$ for all $0\leq j \leq |A_k|$.  Look at $A_{k+1}C = A_kA_kB_kC$. Because $A_k \succ \sigma^j(A_k)$ for all $1\leq j\leq |A_k|-2$, it follows that $A_{k+1}C \succeq \sigma^j(A_{k+1}C)$ for $0\leq j\leq |A_k|-2$ and $|A_k|+1 \leq j \leq 2|A_k|-2$. Additionally, because $\sigma^{|A_k|-1}(A_{k+1}C)$ begins with $11 \prec 10$, it follows that $A_{k+1}C \succeq \sigma^j(A_{k+1}C)$ for $j = |A_k|-1$. Because $A_kA_k \succ A_kB_k$, it follows that $A_{k+1}C \succeq \sigma^j(A_{k+1}C)$ for $j = |A_k|$. Since $\sigma^{2|A_k|-1}(A_{k+1}C)$ begins with $11$, we see that $A_{k+1}C\succ \sigma^j(A_{k+1}C)$ for $j = 2|A_k|-1$. Lastly, because $A_k 1 \succ A_kC \succeq \sigma^j(B_kC)$ for all $0\leq j \leq |A_k|$, it follows that $A_{k+1}C \succ \sigma^j(A_{k+1}C)$ for all $2|A_k| \leq j \leq |A_{k+1}|$.

It thus follows that $A_{k+1}C$ is shift-maximal. Hence, by induction, $A_nC$ is shift-maximal for all $n\in \N$.
\end{proof}

\begin{theorem}
    Let $A_n$ and $B_n$ be defined as in Example~\ref{Example: reg rec and longbranched}. Then the sequence $w = \lim_{n\to \infty}A_n$ is the kneading sequence for a unimodal map that is longbranched and has regularly recurrent turning point. 
\end{theorem}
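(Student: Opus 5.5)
The proof splits into three parts: admissibility of $w$, regular recurrence of $c$, and boundedness of the kneading map $Q$.

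\textbf{Admissibility.} By the previous lemma, each $A_nC$ is shift-maximal. Since $A_{n+1}=A_nA_nB_n$ begins with $A_n$, the words $A_nC$ converge prefixwise to $w$. Shift-maximality is decided by finite parity-lexicographic comparisons. Suppose $\sigma^j(w)\succ w$ for some $j$. Then the first disagreement occurs inside some prefix $A_n$ with $|A_n|$ much larger than $j$ plus the disagreement index. This contradicts $A_n\succ\sigma^j(A_n)$ from the inductive hypothesis of the previous lemma, or, if the comparison reaches the end of a block, the comparison $A_nC\succ\sigma^j(B_nC)$. Hence $w$ is shift-maximal and is the kneading sequence of a unimodal map $f$. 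Moreover $w$ is not eventually periodic: the pattern of $A$'s and $B$'s at level $n$ is the Thue–Morse-like substitution $A\mapsto AAB$, $B\mapsto ABB$, which is aperiodic. So $c$ is not periodic.

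\textbf{Regular recurrence.} I would show that $w$ is a Toeplitz sequence. Write $L_n:=|A_n|=|B_n|=9\cdot 3^{n-1}$. The lengths are equal inductively, starting from $|A_1|=|B_1|=9$ when the cutting-time dots are ignored. The sequence $w$ is a concatenation of level-$n$ blocks, each equal to $A_n$ or $B_n$, aligned at multiples of $L_n$. By the preceding lemma, $A_n$ and $B_n$ differ only at two fixed positions $p_n,p_n+1$, where $p_n=L_{n-1}+\cdots+L_1+3$. Thus every position of $w$ congruent mod $L_n$ to a position outside $\{p_n,p_n+1\}$ is periodic with period $L_n$.

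Now fix a prefix length $m$. Since $p_n\ge L_{n-1}$, choosing $n$ with $L_{n-1}>m$ puts the positions $1,\dots,m$ before $p_n$. Hence $e_{jL_n+1}\cdots e_{jL_n+m}=e_1\cdots e_m$ for all $j$, which is exactly the symbolic criterion for regular recurrence with $M=L_n$. I must also confirm that the first positions $1,\dots,m$ avoid $p_n$ and $p_n+1$ for a single $n$, which holds since $p_n>m$.

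\textbf{Bounded kneading map.} This is the main obstacle: I need to show that $Q$ is bounded, that is, $S_k-S_{k-1}\in\{S_0,S_1,\dots,S_B\}$ for a fixed $B$. The dots in $A_1=1.0.0.11.1$ and $B_1=1.0.101.1$ indicate the intended cutting times. The claim to prove is that the cutting times of $w$ are obtained by concatenating the words $0$, $11$, $101$ (after the initial $1$), possibly with a first word longer near block boundaries. The gaps between consecutive cutting times would then lie in $\{1,2,3\}=\{S_0,S_1,S_2\}$, giving $Q(k)\le 2$.

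To do this I would use the $\rho$-function from \eqref{eq:rho-function}. Inductively on $n$, I would show that at each position where a new word $0$, $11$ or $101$ begins, $w$ agrees with $\nu=w$ up to the end of that word and then disagrees. Concretely, the prefix of $w$ is $10011\ldots$ and each of $0$, $11$, $101$ is followed by a symbol producing the disagreement required for a cutting time. This is a finite check inside $A_1A_1B_1$ together with a check at the junctions $A_nA_n$, $A_nB_n$, $B_nB_n$.

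The junction checks are where I expect the difficulty. Each junction starts with $1.0.$ following a word ending in $1$, so I would verify that the agreement with the prefix of $w$ there has length at most $3$. If it were longer, it would force a longer gap, which would still be a cutting time but of bounded index.

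Given this, $Q$ is bounded, so $f$ is longbranched.
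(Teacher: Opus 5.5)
Your admissibility argument is the paper's: a prefixwise limit of the shift-maximal words $A_nC$ is shift-maximal.

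\textbf{Regular recurrence.} Here you take a different, more self-contained route. The paper cites \cite[Corollary 4.6]{Alvin_uniformly_recurrent} on left-proper constant-length substitutions. You instead read the Toeplitz property directly off the lemma that $A_n$ and $B_n$ differ only at positions $p_n,p_n+1$. That argument is correct, but your lengths are wrong. As binary words $A_1=100111$ and $B_1=101011$ have six symbols each, so $L_n=6\cdot 3^{n-1}$ and $p_n=3^n$. These values still satisfy $p_n>L_{n-1}>m$ and $p_n+1\le L_n$, so the logic is unchanged. The citation buys generality, since it covers any infinite sequence of left-proper constant-length substitutions. Your version buys a proof that needs nothing beyond the symbolic Toeplitz criterion. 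Your aperiodicity aside is not needed for the statement.

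\textbf{Bounded kneading map.} This is the genuine gap. You leave it as a plan, and the plan as stated would not give $Q\le 2$.
\begin{itemize}
\item You propose to show that the agreement at junctions has length at most $3$. But an agreement of length $3$ with $\nu=100\cdots$ already forces a gap of at least $4$, hence $Q(k)\ge 3$.
\item Your fallback, that a longer gap would still be a cutting time of bounded index, has no justification.
\item You also misdescribe the words: in each of $0$, $11$, $101$ the \emph{last} symbol is the one disagreeing with $\nu$. The disagreement is not a symbol following the word.
\end{itemize}

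The missing observation is that the rule is local. Since $\nu_1\nu_2\nu_3=100$, the gap after a cutting time $n$ is $1$, $2$ or $3$ according as $e_{n+1}=0$, $e_{n+1}e_{n+2}=11$, or $e_{n+1}e_{n+2}e_{n+3}=101$. Only $e_{n+1}e_{n+2}e_{n+3}=100$ could produce a longer gap. So no junction analysis at levels $n\ge 2$ is needed; a one-step induction on $6$-blocks suffices.
\begin{itemize}
\item Suppose $6j+1$ is a cutting time and the block at positions $6j+1,\dots,6j+6$ is $A_1$. Then the next cutting times are $6j+2,6j+3,6j+5,6j+7$, coming from the words $0,0,11$ and a word $11$ straddling the junction. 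The straddling word uses that every block begins with $1$.
\item If the block is $B_1$, the next cutting times are $6j+2,6j+5,6j+7$, from the words $0,101,11$.
\end{itemize}
Starting from $S_0=1$, every gap lies in $\{1,2,3\}=\{S_0,S_1,S_2\}$, so $Q(k)\in\{0,1,2\}$. This is what the paper's one-line justification ("each begins and ends with $1$") compresses. With this induction in place your proof is complete.
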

\begin{proof}
    First note that $w = \lim_{n\to \infty} A_nC$. Since $A_nC$ is shift-maximal for all $n\in \N$, we conclude that $w$ is shift-maximal. Hence $w$ is the kneading sequence for some unimodal map. By \cite[Corollary 4.6]{Alvin_uniformly_recurrent}, it follows that $c$ is regularly recurrent. In fact, any kneading sequence that can be constructed through an infinite sequence of left proper constant length substitutions belongs to a unimodal map with regularly recurrent turning point. %Lastly, by construction, $Q(k)\in \{0,1,2\}$ for all $k\in \N$
    Lastly, as $A_1$ and $B_1$ were both chosen such that any possible concatenation prohibits large gaps between cutting times (since each begins and ends with $1$), it follows that $Q(k)\in \{0,1,2\}$ for all $k\in \N$.   
    We thus conclude that our unimodal map is longbranched and has a regularly recurrent turning point.
\end{proof}

\begin{remark}
    Note that $f|_{\omega(c)}$ is not a minimal homeomorphism in this case. If $\sigma^k(\mathcal{K}(f))$ begins with $100111$, then $k \equiv 0 \mod 6$. Since $\sigma^k(\mathcal{K}(f))$ begins with a 1 for all $k\equiv 5 \mod 6$, the only preimage of $\mathcal{K}(f)$ in the shift space generated by $\mathcal{K}(f)$ is $1\mathcal{K}(f)$. By Theorem~\ref{Theorem:Alvin_homeo}, $f|_{\omega(c)}$ is not a minimal homeomorphism.
\end{remark}

Observe that Example~\ref{Example: reg rec and longbranched} has a linearly recurrent kneading sequence, and thus by Proposition~\ref{prop: linearly recurrent not slowly recurrent} it does not have a slowly recurrent kneading sequence. The question remains whether this map satisfies the CE-condition; this is Question~\ref{Question: regularly recurrent, not slowly recurrent but CE}.

\begin{comment}
We thus end this section with the following related question.

\begin{question}
    Is it possible for a map $f$ with regularly recurrent turning point $c$ to be such that $\mathcal{K}(f)$ is not slowly recurrent, but $f$ still satisfies the CE-condition?
\end{question}
\end{comment}

% \Lori{I am uncertain if this comment is relevant anymore unless we want to contrast with Prop 55.}
% Because $c$ is regularly recurrent, $c$ is also uniformly recurrent. Note that $|A_n|$ is a co-cutting time for all $n\in \N$ and $\RR(|A_n|) = |A_n|+|A_{n-1}|+\cdots + |A_1|+3$. Additionally, $|A_n|-2$ is also always a co-cutting time; thus, $\widetilde{Q}\not\to\infty$, but $\widetilde{Q}$ is unbounded.

% \Lori{We should add a comment about the fact that the example in this section has linearly recurrent kneading sequence and thus is not slowly recurrent and then ask the question about whether it is CE. We could modify the question to ask if every map $f$ with regularly recurrent turning point $c$ such that $\mathcal{K}(f)$ is not slowly recurrent does not have the CE-condition. }
%
%
%
%
%
%
%
%
%

%
%
%
%
%
%
%
%
%
%
%
%
%
\section{Existence of a regularly recurrent unimodal map with minimal homeomorphism}\label{sec: regularly recurrent minimal homeomorphism not odometer}

It is known that unimodal maps $f$ with embedded odometers are such that $c$ is regularly recurrent and $f|_{\omega(c)}$ is a minimal homeomorphism. We demonstrate that there exist unimodal maps with regularly recurrent turning points such that $f|_{\omega(c)}$ is a minimal homeomorphism that is not conjugate to an odometer.

\begin{example}\label{Example: reg rec homeo}
    Let $A_1 = 10001$, $B_1 = 10101$, and $C_1 = 10100$. For all $n\geq 1$, define $A_{n+1} = A_nA_nB_nB_n$, $B_{n+1} = A_nB_nC_nB_n$, and $C_{n+1} = A_nB_nC_nC_n$. Let $A = \lim_{n\to \infty} A_n$. 
\end{example}

We will demonstrate that $A$ is the kneading sequence for a unimodal map $f$ such that $c$ is regularly recurrent and $f|_{\omega(c)}$ is a homeomorphism; further, $f|_{\omega(c)}$ is not conjugate to an odometer.

\begin{lemma}
$A_k$ has even parity for all $k\in \N$; $B_k$ has even parity for all even $k$ and odd parity for all odd $k$; $C_k$ has even parity for all odd $k$ and odd parity for all even $k$.
\end{lemma}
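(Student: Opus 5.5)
Parity here means the number of 1s modulo 2, and it is additive under concatenation. So the plan is a direct induction on $k$ using the recursions $A_{k+1}=A_kA_kB_kB_k$, $B_{k+1}=A_kB_kC_kB_k$, $C_{k+1}=A_kB_kC_kC_k$. For a word $W$, write $p(W)\in\{0,1\}$ for its parity, with $0$ meaning even. Then
\[
p(A_{k+1})=2p(A_k)+2p(B_k)\equiv 0,
\]
\[
p(B_{k+1})\equiv p(A_k)+2p(B_k)+p(C_k)\equiv p(A_k)+p(C_k),
\]
\[
p(C_{k+1})\equiv p(A_k)+p(B_k)+2p(C_k)\equiv p(A_k)+p(B_k).
\]

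First I settle the base case by counting 1s. $A_1=10001$ has two 1s, so it is even. $B_1=10101$ has three, so it is odd. $C_1=10100$ has two, so it is even. This matches the claim for $k=1$ (odd): $B_1$ is odd and $C_1$ is even.

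For the inductive step I assume the claim at level $k$.
\begin{itemize}
\item The first relation gives $A_{k+1}$ even unconditionally, since every word appears exactly twice in $A_{k+1}=A_kA_kB_kB_k$.
\item Since $A_k$ is even by hypothesis, $p(B_{k+1})=p(C_k)$ and $p(C_{k+1})=p(B_k)$. The parities of $B$ and $C$ therefore swap from one level to the next.
\item If $k$ is odd, $B_k$ is odd and $C_k$ is even. Then $B_{k+1}$ is even and $C_{k+1}$ is odd, which is the claim for the even index $k+1$.
\item If $k$ is even, $B_k$ is even and $C_k$ is odd, so $B_{k+1}$ is odd and $C_{k+1}$ is even, which is the claim for the odd index $k+1$.
\end{itemize}

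There is no real obstacle. The only point needing care is the bookkeeping of which words occur an odd number of times in each recursion, and the base-case count.
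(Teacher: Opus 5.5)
Your proof is correct and follows the same route as the paper's: count the 1s, use the recursions to get $A_{k+1}$ even and $p(B_{k+1})=p(C_k)$, $p(C_{k+1})=p(B_k)$ once $A_k$ is even, and then induct from the base case $A_1=10001$, $B_1=10101$, $C_1=10100$. Your base-case counts and the alternation of parities between $B_k$ and $C_k$ match the paper's argument exactly.
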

\begin{proof}
Clearly $A_1$ and $C_1$ have even parity while $B_1$ has odd parity. Let $\#(A_k)$ denote the number of ones in $A_k$, $\#(B_k)$ the number of ones in $B_k$, and $\#(C_k)$ the number of ones in $C_k$. Then $\#(A_k) = \#(A_{k-1})+\#(A_{k-1}) + \#(B_{k-1})+\#(B_{k-1}) = 2(\#(A_{k-1}) + \#(B_{k-1}))$, which is even for all $k\geq 2$. We have that $\#(B_k) = \#(A_{k-1}) + 2\#(B_{k-1}) + \#(C_{k-1})$, and thus $B_k$ has same parity as $C_{k-1}$. Similarly, $\#(C_k) = \#(A_{k-1}) + \#(B_{k-1}) + 2\#(C_{k-1})$; thus $C_k$ has the same parity as $B_{k-1}$. We thus see that $B_2$ has even parity, and $C_2$ has odd parity. Inductively, $B_{2k}$ and $C_{2k+1}$ have even parity for all $k\in \N$ while $B_{2k+1}$ and $C_{2k}$ have odd parity for all $k\in \N$.
\end{proof}

\begin{remark}\label{rem: A_k B_k}
    We make the following observations about the words $A_k$, $B_k$, and $C_k$ relative to the parity lexicographical ordering. These properties are all straightforward to verify.
    \begin{enumerate}
        \item $A_1 \succ B_1 \succ C_1$
        \item $A_1A_1 \succ A_1B_1$
        \item $A_2 \succ C_2 \succ B_2$
        \item $A_kA_k \succ A_kB_k$ for all $k\in \N$
        \item $A_{2k+1} \succ B_{2k+1}\succ C_{2k+1}$ for all $k\in \N$
        \item $A_{2k} \succ C_{2k} \succ B_{2k}$ for all $k\in \N$
        \item $A_11 \succ \sigma^k(A_11)$ for all $1\leq k \leq 4$
        \item $A_11 \succ \sigma^k(B_11)$ for all $1\leq k \leq 4$
        \item $A_11 \succ \sigma^k(C_11)$ for all $1\leq k \leq 4$
    \end{enumerate}
\end{remark}

\begin{lemma}
For every $n\in\mathbb N$, $|A_n|=|B_n|=|C_n|=5\cdot 4^{n-1}$.
\end{lemma}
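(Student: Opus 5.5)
The plan is induction on $n$, proving the three equalities simultaneously, since the recursion defining $A_{n+1}$, $B_{n+1}$, $C_{n+1}$ mixes all three words. It is not enough to track $|A_n|$ alone.

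For the base case $n=1$, read the lengths directly from Example~\ref{Example: reg rec homeo}. Each of $A_1=10001$, $B_1=10101$, $C_1=10100$ has five symbols, so $|A_1|=|B_1|=|C_1|=5=5\cdot 4^{0}$.

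For the inductive step, assume $|A_n|=|B_n|=|C_n|=5\cdot 4^{n-1}$ for some $n\ge 1$. Each of $A_{n+1}=A_nA_nB_nB_n$, $B_{n+1}=A_nB_nC_nB_n$ and $C_{n+1}=A_nB_nC_nC_n$ is a concatenation of exactly four words from $\{A_n,B_n,C_n\}$. Length is additive under concatenation, so each of the three new words has length $4\cdot 5\cdot 4^{n-1}=5\cdot 4^{n}$, which closes the induction.

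There is no real obstacle here. The only point needing care is that the induction must carry all three lengths together; otherwise the length of $B_{n+1}$ could not be computed from information about $A_n$ alone. A useful byproduct is that $|A_n|$ divides $|A_{n+1}|$ with quotient $4$, so $A_n$ is a prefix of $A_{n+1}$. This is the structure later exploited for the Toeplitz (regular recurrence) property of $A=\lim_{n\to\infty}A_n$.
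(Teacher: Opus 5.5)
Your induction is correct and is exactly the paper's argument: the base case is read off from the three five-symbol words, and each level-$(n+1)$ word is a concatenation of four level-$n$ words, which gives length $4\cdot 5\cdot 4^{n-1}=5\cdot 4^{n}$. One small quibble with your closing aside: $A_n$ being a prefix of $A_{n+1}$ follows directly from the recursion $A_{n+1}=A_nA_nB_nB_n$, not from the divisibility $|A_n|\mid |A_{n+1}|$.
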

\begin{proof}
This is clear for $n=1$. Each word at level $n+1$ is obtained by concatenating
four level-$n$ words, so the lengths satisfy
$|A_{n+1}|=|B_{n+1}|=|C_{n+1}|=4|A_n|$.
\end{proof}

\begin{lemma}\label{lem: A_n1}
    For each $n\in \N$, we have that $A_n1 \succ \sigma^k(A_n1)$, $A_n1\succ \sigma^k(B_n1)$, and $A_n1\succ \sigma^k(C_n1)$ for all $1\leq k \leq 5\cdot 4^{n-1}-1$.
\end{lemma}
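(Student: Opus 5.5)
We argue by induction on $n$, with the base case $n=1$ given by items (7)--(9) of Remark~\ref{rem: A_k B_k}. For the inductive step, assume that for some $n\ge 1$ we have $A_n1\succ\sigma^k(W1)$ for every $W\in\{A_n,B_n,C_n\}$ and every $1\le k\le L-1$, where $L=5\cdot 4^{n-1}$. Let $W'$ be any of $A_{n+1},B_{n+1},C_{n+1}$. Each is a concatenation $X_1X_2X_3X_4$ of level-$n$ words with $X_1=A_n$ in every case. For a shift $1\le k\le 4L-1$, write $k=qL+r$ with $0\le q\le 3$ and $0\le r<L$, and split into two cases according to whether $r\neq 0$ or $r=0$.

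\emph{Case $r\neq 0$.} Here $\sigma^k(W'1)$ begins with the tail $\sigma^r(X_{q+1})$ followed by the first symbol of the next word. That next word is $X_{q+2}$, or the appended $1$ when $q=3$. Every level-$n$ word begins with $1$, since each is built from $A_1,B_1,C_1$, all of which begin with $1$. So $\sigma^k(W'1)$ begins with $\sigma^r(X_{q+1}1)$. We then compare $A_{n+1}1$ against it, using that $A_{n+1}1$ begins with $A_n$. By the induction hypothesis, $A_n1\succ\sigma^r(X_{q+1}1)$, and this comparison is decided within the first $L-r+1$ symbols. If the first difference occurs inside the first $L-r$ symbols, the prefix $A_n$ of $A_{n+1}$ already settles the comparison. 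If instead the difference sits at position $L-r+1$, we must check that the $(L-r+1)$st symbol of $A_{n+1}1$ plays the role of the $1$ in $A_n1$. That symbol is $A_{n+1}$'s first symbol after $A_n$, which is the first letter of $A_n$, namely $1$. So the inequality transfers.

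\emph{Case $r=0$, with $q\in\{1,2,3\}$.} Now $\sigma^k(W'1)$ starts with a full level-$n$ word $X_{q+1}$ followed by either $X_{q+2}\cdots$ or $1$. We compare against $A_{n+1}1=A_nA_nB_nB_n1$ using the ordering facts (4)--(6) of Remark~\ref{rem: A_k B_k}: $A_n$ is the strict maximum among $A_n,B_n,C_n$, and $A_nA_n\succ A_nB_n$. If $X_{q+1}\ne A_n$, the comparison is strict already within the first block. If $X_{q+1}=A_n$, we look at the next block. Among all the words, $A_n$ is followed by $A_n$ only at the start of $A_{n+1}$ itself, which is excluded since $k\ge1$. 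Otherwise $A_n$ is followed by $B_n$ or by the final $1$, and we conclude from $A_nA_n\succ A_nB_n$. When the next symbol is the appended $1$, we must instead compare $A_nA_n\cdots$ with $A_n1$, which reduces to comparing the first symbol of $A_n$ (a $1$) with the $1$.

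The main obstacle is this last situation, together with the bookkeeping in the case $r\neq0$. There, parity matters: the symbol following the common prefix is compared under the parity of that prefix, and a tie can occur. For the tie with $A_n1$ I would sharpen the induction hypothesis so that the final symbol is not relevant. Concretely, I would prove the stronger statement that $A_n\succ\sigma^k(W)$ as finite words, with a strict difference occurring before the end, whenever $k\le L-2$, and check the shift $k=L-1$ separately, where the comparison is between $1$ and the length-two word $1\,1$. If this sharpening fails, I would restrict $k$ to $1\le k\le 4L-1$ in the step and handle $k=L-1$ directly by observing that $A_n$ ends in $1$ while $A_n$ begins with $10$.
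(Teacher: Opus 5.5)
Your proof is correct and follows the paper's argument: shifts with $r\neq 0$ reduce to the induction hypothesis because every level-$n$ word begins with $1$, and the block-boundary shifts $k\in\{L,2L,3L\}$ are settled by facts (4)--(6) of Remark~\ref{rem: A_k B_k}, which is what the paper does when it writes out $\sigma^{L}(A_{n+1}1)=A_nB_nB_n1$, $\sigma^{2L}(A_{n+1}1)=B_nB_n1$, $\sigma^{3L}(A_{n+1}1)=B_n1$ and the analogous tails for $B_{n+1}1$ and $C_{n+1}1$. The obstacle you flag does not occur, since $A_{n+1},B_{n+1},C_{n+1}$ end in $B_n,B_n,C_n$, so $A_n$ is never followed by the appended $1$ (if it were, $A_n1$ would be a prefix of $A_{n+1}1$ and no sharpening could give strictness, so your fallback is unnecessary), and in the case $r\neq0$ position $L-r+1$ lies inside the initial $A_n$ of $A_{n+1}1$, so the only thing to check there is that the \emph{shifted} word has a $1$ at that position, which you had already shown.
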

\begin{proof}
    By properties 7-9 of Remark~\ref{rem: A_k B_k}, we know that all three claims hold when $n=1$. Hence, suppose that we have all three claims hold for some fixed $n\geq 1$. Then $A_{n+1}1 = A_nA_nB_nB_n1$, $B_{n+1}1 = A_nB_nC_nB_n1$, and $C_{n+1}1 = A_nB_nC_nC_n1$. Additionally, by the inductive claim, we only need to check that $A_{n+1}1 \succ \sigma^{t}(A_{n+1}1)$, $A_{n+1}1\succ \sigma^t(B_{n+1}1)$ and $A_{n+1}1\succ \sigma^t(C_{n+1}1)$ for $t \in  \{5\cdot 4^{n-1}, 10\cdot 4^{n-1}, 15\cdot 4^{n-1}\}$. Observe that $\sigma^{5\cdot4^{n-1}}(A_{n+1}1) = A_nB_nB_n1$, $\sigma^{10\cdot 4^{n-1}}(A_{n+1}1) = B_nB_n1$, and $\sigma^{15\cdot 4^{n-1}}(A_{n+1}1) = B_n1$. By properties 4-6 of Remark~\ref{rem: A_k B_k}, we conclude that $A_{n+1}1\succ \sigma^k(A_{n+1}1)$ for all $1\leq k\leq 5\cdot 4^n-1$. Similar logic shows that $A_{n+1}1\succ \sigma^k(B_{n+1}1)$ and $A_{n+1}1\succ \sigma^k(C_{n+1}1)$ for all $1\leq k\leq 5\cdot 4^n-1$. By induction, we conclude that all three claims hold for every $n\in \N$.
\end{proof}

\begin{proposition}
    The sequence $A = \lim_{n\to \infty}A_n$ as defined in Example~\ref{Example: reg rec homeo} is shift-maximal.
\end{proposition}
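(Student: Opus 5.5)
The plan is to mirror the argument used for the previous construction in Example~\ref{Example: reg rec and longbranched}. First we reduce shift-maximality of the infinite sequence $A$ to a finite statement at each level $n$.

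Since $A_{n+1}=A_nA_nB_nB_n$, each $A_n$ is a prefix of $A_{n+1}$, so $A$ is well defined. Moreover, every prefix of $A$ is a prefix of some $A_n$. Suppose $\sigma^k(A)\succ A$ for some $k\ge1$. The first disagreement occurs at some finite position $m$, and the parity of the preceding block is the same in both sequences. Hence the inequality is already witnessed inside a finite block $A_n$ with $|A_n|=5\cdot 4^{n-1}\gg k+m$. So it suffices to show the following: for every $n$ and every $1\le k<|A_n|$, the word $\sigma^k(A_n)$ does not exceed the corresponding prefix of $A$ in the parity-lexicographical order.

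The key step is to decompose $A$ at level $n$. By construction $A$ is a concatenation of level-$n$ blocks, each equal to $A_n$, $B_n$ or $C_n$, and $A$ itself begins with $A_n$. Fix $k\ge1$ and write $k=q\cdot 5\cdot 4^{n-1}+r$ with $0\le r<5\cdot4^{n-1}$, choosing $n$ with $k<|A_n|$ as above and working at a finer level where necessary. Then $\sigma^k(A)$ begins either with $\sigma^r(X)$ followed by a further block, for some $X\in\{A_n,B_n,C_n\}$ with $r\ge1$, or with a block $X$ itself when $r=0$.

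We treat the two cases separately.
\begin{itemize}
\item If $r\ge1$, Lemma~\ref{lem: A_n1} gives $A_n1\succ\sigma^r(X1)$. The next block after $X$ always begins with $1$, since $A_1,B_1,C_1$ all begin with $1$. Hence $\sigma^r(X)$ followed by the next symbol is strictly below the corresponding prefix of $A_n1$. The comparison is therefore decided strictly in favor of $A$ before position $|A_n|-r+1$, which gives $\sigma^k(A)\prec A$.
\item If $r=0$, so $k$ is a positive multiple of $|A_n|$, we pass up a level. We compare $A_{m}$, the prefix of $A$, with the level-$m$ block starting at position $k$, for the largest $m$ such that $k$ is a multiple of $|A_m|$. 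This reduces to comparing a level-$(m+1)$ block beginning after one or more level-$m$ blocks with $A_{m+1}=A_mA_mB_mB_m$.
\end{itemize}

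The genuine comparisons in the second case are exactly properties 4--6 of Remark~\ref{rem: A_k B_k}: $A_kA_k\succ A_kB_k$, and $A_k$ dominates $B_k$ and $C_k$ at every level. When the block at position $k$ is $A_m$ followed by something other than $A_m$, property 4 decides the comparison. Since $A_m$ has even parity by the parity lemma, a leading $A_m$ does not flip the ordering. When the block is $B_m$ or $C_m$, properties 5 and 6 decide it directly, because $A_m\succ B_m,C_m$.

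I expect the main obstacle to be the case $r=0$ in which the shifted sequence begins with several copies of $A_m$ in a row. One then has to verify that the level-$(m+1)$ block containing position $k$ cannot be $A_{m+1}$ in a way that produces a tie propagating forever. Ties are excluded because $A$ is aperiodic at each level: the $(m+1)$-blocks that follow differ from $A_{m+1}$ at the appropriate place. To settle this, I would first try an induction on $m$ using the substitution structure. The blocks following $A_m$ inside any level-$(m+1)$ word are among $A_mB_m$, $B_mB_m$, $B_mC_m$, $C_mB_m$ and $C_mC_m$, and each of these is dominated by $A_mA_m$ or by $A_m$ via Remark~\ref{rem: A_k B_k}.

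Once every shift $k\ge1$ is handled, we get $\sigma^k(A)\preceq A$ for all $k$, which is shift-maximality.
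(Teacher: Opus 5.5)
Your proof is correct. Once you choose $n$ with $k<|A_n|$, your first case with $X=A_n$ is exactly the paper's argument: Lemma~\ref{lem: A_n1} compares $\sigma^k(A_n1)$ with a prefix of $A_n$, which gives $\sigma^k(A)\prec A$. With that choice $r=k\ge 1$, so your $r=0$ case and the feared run of repeated $A_m$ blocks never arise; even at a smaller fixed level, maximality of $m$ means you only meet $A_mB_m$, $B_m$ or $C_m$, because aligned $A_mA_m$ occurs only at the start of an $A_{m+1}$.
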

\begin{proof}
    % This follows immediately from the fact that for each $n\in \N$, $A_n1 \succ \sigma^k(A_n1)$ for all $1\leq k \leq 5\cdot 4^{n-1}-1$. \Jernej{Do we need some more explanation here? Maybe the following:}
    Let $m\geq 1$ be arbitrary. Choose $n$ sufficiently large so that $m<|A_n|=5\cdot 4^{n-1}$. Since $A_n$ is an initial block of $A$, the first $|A_n|+1-m$ symbols of $A$ and $\sigma^m(A)$ coincide with the comparison between $A_n1$ and $\sigma^m(A_n1)$. By Lemma~\ref{lem: A_n1}, $A_n1 \succ \sigma^m(A_n1)$, hence $A\succ \sigma^m(A)$. Since $m$ was arbitrary, $A$ is shift-maximal.
\end{proof}

By construction, $A$ is a Toeplitz sequence and therefore uniformly recurrent.

\begin{theorem}\label{thm: regularly recurrent minimal not odometer}
    The unimodal map $f$ with kneading sequence $A$ as defined in Example~\ref{Example: reg rec homeo} has regularly recurrent turning point $c$ and $f|_{\omega(c)}$ is a minimal homeomorphism and the map admits no embedded odometer.
\end{theorem}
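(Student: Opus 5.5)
The proof splits into three parts: regular recurrence, the minimal homeomorphism property, and the absence of an odometer. The proposition above shows that $A$ is shift-maximal, so $A=\mathcal K(f)$ for some unimodal map $f$.

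\emph{Regular recurrence.} I use the Toeplitz criterion from the definition of regular recurrence. Every level-$(n+1)$ word has the form $A_n\ast\ast\ast$ or $A_nB_n\ast\ast$. So $A$ is a concatenation of level-$n$ blocks, and every block in a position $\equiv 0 \pmod 4$ at level $n$ is $A_n$. Inductively, the length-$5\cdot4^{n-1}$ block of $A$ starting at any position $j\cdot 5\cdot 4^{n}+1$ equals $A_n$. Any prefix $\nu_1\cdots\nu_m$ with $m\le |A_n|$ therefore recurs with period $M=5\cdot4^{n}$. Hence $A$ is Toeplitz and $c$ is regularly recurrent, which also gives uniform recurrence. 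Since $A$ is aperiodic, $\omega(c)$ is a Cantor set. Aperiodicity should follow from the three blocks being pairwise distinct at every level; a periodic Toeplitz sequence would force eventual equality of the level blocks.

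\emph{Minimal homeomorphism.} I apply Theorem~\ref{Theorem:Alvin_homeo}. I need to show that whenever $a0b, a'1b\in X_f$ with $|a|=|a'|$, then $a=a'$ and $b=A$. The substitution structure gives a unique desubstitution (recognizability) at each level. The three base words $10001,10101,10100$ are chosen so that the level-$1$ block boundaries can be read off locally. Two points of $X_f$ that agree on a right tail $b$ but differ at the preceding coordinate must then have parsings that agree on $b$ except possibly at a finite boundary. The only symbol differences between the words are: $A_1,B_1$ differ at position $3$ (interior), and $B_1,C_1$ differ at position $5$ (the last symbol). More generally, $B_n$ and $C_n$ differ only in their last symbol, because $B_{n+1}=A_nB_nC_nB_n$ and $C_{n+1}=A_nB_nC_nC_n$. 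A single-symbol discrepancy followed by identical futures can therefore only occur at the end of a $B_n$ versus $C_n$ block, for all $n$ simultaneously. Following this down the hierarchy forces $b$ to be the limit of the words that follow the $B_n/C_n$ block. That limit is $A=\lim A_n$, since every level-$(n+1)$ word begins with $A_n$ and the next word again begins with $A_{n+1}$. It also forces $a=a'$.

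\emph{No embedded odometer.} A minimal system is conjugate to an odometer if and only if it is equicontinuous, equivalently if and only if every point is regularly recurrent \cite{BlockKeesling2004}. By the previous step, $\mathcal K(f)$ has two left-asymptotic preimages in $X_f$ that are distinct and differ in exactly one coordinate. Odometers are equicontinuous isometries, so they have no distinct points that are backward asymptotic. This should break equicontinuity and exclude conjugacy to an odometer. Alternatively, I would exhibit a point of $\omega(c)$, for instance a preimage $a0A$, that is not regularly recurrent, using the fact that the coordinate in front of $A$ does not follow a fixed period.

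\emph{Main obstacle.} I expect the hardest step to be the recognizability and uniqueness argument for Theorem~\ref{Theorem:Alvin_homeo}. This means proving that the only pair of left-extensions agreeing on a tail is the $B_n/C_n$ last-symbol pair. It also means checking that the $A_1/B_1$ interior difference never produces a second such pair. I would try to handle both by an induction on $n$, parsing points of $X_f$ into level-$n$ blocks.
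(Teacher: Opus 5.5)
Your regular-recurrence argument (every level-$(n+1)$ word begins with $A_n$, so $A$ is Toeplitz with periods $5\cdot 4^{n}$) is a valid, self-contained substitute for the paper's citation of \cite[Corollary 4.6]{Alvin_uniformly_recurrent}. Your recognizability argument for Theorem~\ref{Theorem:Alvin_homeo} is essentially the paper's. That includes the need to rule out the interior $A_1/B_1$ difference, which is the paper's offset $k=3$ case: in $X_\theta$ the only branching is $B_1$ versus $C_1$ in front of the fixed point. The aperiodicity you only gesture at follows from that step. Both $0A$ and $1A$ lie in $X_f$, so the shift is not injective on $X_f$, which is impossible if $A$ is periodic.

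The genuine gap is the exclusion of an odometer.

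\emph{The asymptotic-pair argument fails.} The sequences $a0\mathcal K(f)$ and $a1\mathcal K(f)$ are two codings of one and the same point of $\omega(c)$: the unique $y\in\omega(c)$ with $f^{|a|}(y)=c$. Indeed, $0\mathcal K(f)$ and $1\mathcal K(f)$ are the two one-sided itineraries of $c$. The condition in Theorem~\ref{Theorem:Alvin_homeo} is exactly what makes this gluing produce an injective map. So in $(\omega(c),f)$ your ``pair'' is a single backward orbit and says nothing about equicontinuity. Your reasoning uses only what Theorem~\ref{Theorem:Alvin_homeo} forces for every minimal Cantor homeomorphism $f|_{\omega(c)}$. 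It would therefore equally ``prove'' that infinitely renormalizable maps have no embedded odometer, which is false.

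\emph{The fallback fails as stated.} $a0A$ codes a point of the backward orbit of $c$. Since $c$ is regularly recurrent and $f|_{\omega(c)}$ is a homeomorphism, and $f$ conjugates $(\omega(c),f)$ to itself, every such preimage is also regularly recurrent.

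\emph{What is missing.} You need a second source of non-trivial fibres over the odometer factor, independent of the $B_n/C_n$ gluing at $c$. The paper finds it in the skeleton. Besides the hole class $0 \pmod{p_i}$, which is the last symbol where $B_n$ and $C_n$ differ, the $p_i$-skeleton always has a hole at $r_i=3+\sum_{k=1}^{i-1}p_k \pmod{p_i}$. This hole comes from the interior position where $A_{i-1}$ and $B_{i-1}$ differ, carried through the second block of each level-$i$ word. These residues satisfy $r_{i+1}\equiv r_i \pmod{p_i}$ and $r_1=3\not\equiv 0$. They therefore form a thread distinct from (and not on the orbit of) the one glued at $c$. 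The paper then concludes via the criterion \cite[Theorem 4.2]{Alvin_ftcp}. If you prefer to stay with \cite{BlockKeesling2004}, you must use this thread to construct two distinct points of $\omega(c)$ in the same fibre, off the orbit of $c$. You must also show that they remain distinct after the gluing and are not regularly recurrent.
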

\begin{proof}
    Let $A = a_1a_2a_3\cdots$ be the kneading sequence of our map. By \cite[Corollary 4.6]{Alvin_uniformly_recurrent}, we conclude that $c$ is regularly recurrent.  We note that $A$ is an aperiodic Toeplitz kneading sequence whose essential periods are
$p_i=5\cdot 4^{i-1}$ for $i\in\mathbb N$. Observe that $A_1 = 10001$, $B_1 = 10101$, and $C_1 = 10100$ imply that $a_{1+5n} = 1$, $a_{2+5n} = 0$, and $a_{4+5n} = 0$ for all $n\in \N$; that is, the only holes that appear in the $5$-skeleton of $A$ (i.e., the positions that are not periodic with period $5$) occur in positions congruent to 3 or 0 modulo 5.
    Because $A_2 = A_1A_1B_1B_1$, $B_2 = A_1B_1C_1B_1$, and $C_2 = A_1B_1C_1C_1$, we have that the only holes that appear in the $20$-skeleton of $A$ (i.e., the positions that are not periodic with period $20$) occur in the positions congruent to 8, 15, or 0 modulo 20. Additionally, the only holes that appear in the $80$-skeleton of $A$ occur in the positions congruent to 28, 35, 60, and 0 modulo 80. In particular, observe that for every $p_i = 5\cdot 4^{i-1}$, there will always be a hole in the $p_i$-skeleton of $A$ at the position congruent to $3+\sum_{k=1}^{i-1} p_k$ modulo $p_i$. 
    Thus, for every essential period $p_i$, the $p_i$-skeleton contains a hole in the congruence class $3+\sum_{k=1}^{i-1}p_k \pmod{p_i}$. Therefore the hypotheses of \cite[Theorem 4.2]{Alvin_ftcp} are not satisfied, and the unimodal map does not have an embedded odometer.

    Consider the substitution $\theta:\{A_1,B_1,C_1\}\to\{A_1,B_1,C_1\}^\ast$ given by $\theta(A_1) = A_1A_1B_1B_1$, $\theta(B_1) = A_1B_1C_1B_1$, and $\theta(C_1) = A_1B_1C_1C_1$. Observe that, from the allowed predecessor words determined by $\theta$, the
only point in the associated one-sided shift space $X_\theta$ that has two
preimages is the fixed point $\lim_{n\to\infty}\theta^n(A_1)$.
Further note that one preimage begins with $B_1$ and the other with $C_1$. In Example~\ref{Example: reg rec homeo}, for all $n\geq 2$, $A_n = \theta^{n-1}(A_1)$, $B_n = \theta^{n-1}(B_1)$, and $C_n = \theta^{n-1}(C_1)$.
    
    In the construction of Example~\ref{Example: reg rec homeo}, let $X_f$ denote the closure of the forward shift of the kneading sequence $\mathcal{K}(f) = A$ and recall Theorem~\ref{Theorem:Alvin_homeo}.

    Note that $B_1 = 10101$ and $C_1=10100$ agree in all but the last position. Also note that $B_{n+1} = A_nB_nC_nB_n = \theta^n(B_1)$ and $C_{n+1} = A_nB_nC_nC_n = \theta^n(C_1)$, and thus as sequences of 0s and 1s, $B_{n+1}$ and $C_{n+1}$ agree in all but the last position for each $n\in \N$. Observe that any arbitrarily long initial block of $\mathcal{K}(f)$ begins with $\theta^n(A_1) = A_{n+1}$. Because the words $A_1=10001$, $B_1=10101$, and $C_1=10100$ have the same length, share the common prefix $10$, and differ only in their final two symbols, every occurrence of one of these words in a point of $X_f$ is uniquely determined by its initial position modulo $5$. Hence, every point of $X_f$ admits a unique decomposition into level-$1$ words, and inductively into $\theta^n$-words. Thus, each occurrence of $A_{n+1}$ is preceded by either $A_{n+1}$, $B_{n+1}$, or $C_{n+1}$.
    We emphasize that $A_{n+1} = A_nA_nB_nB_n$, $B_{n+1} = A_nB_nC_nB_n$, and $C_{n+1} =A_nB_nC_nC_n$. Hence, every occurrence of $A_{n+1}$ is preceded by either $B_n$ or $C_n$, which as sequences of 0s and 1s agree in every position but the last. We conclude that $\mathcal{K}(f)$ has two preimages in $X_f$ and whenever $a0\mathcal{K}(f),a'1\mathcal{K}(f) \in X_f$ with $|a| = |a'|< \infty$, it follows that $a = a'$.

    Now suppose that there is another point $x\in X_f$ with two preimages under the shift. Because the decomposition of $x$ into its $\theta$-words is unique, we have that $x = \sigma^k(d)w$ where $d\in \{10001, 10101, 10100\}$, $0\leq k < 5$, and $w\in X_f$ is such that the recoding into $\{A_1,B_1,C_1\}^\N$ is exactly a point of $X_\theta$. We let $\hat{w}$ denote the recoding of $w$ into $\{A_1,B_1,C_1\}^\N$ such that $\hat{w} \in X_\theta$ and let $\hat{d}$ represent the appropriate word $A_1,B_1,C_1$ for $d$. If $k = 0$, then $\hat{d}\hat{w}\in X_\theta$ has two preimages, and hence $\hat{d}\hat{w}=\lim_{n\to \infty}\theta^n(A_1)$; therefore $x = \mathcal{K}(f)$. Hence suppose that $1\leq k < 5$. This implies that $\hat{w}$ has two preimages in $X_\theta$, but then $\hat{w} = \lim_{n\to\infty}\theta^n(A_1)$. Hence $\hat{d}\in \{B_1,C_1\}$, and since $B_1 = 10101$  and $C_1 = 10100$, it is not possible for such a $1\leq k < 4$ to exist.  We thus conclude that there is no other point $x\in X_f$ with two preimages under the shift. Hence, the shift on $X_f$ is one-to-one. Since $A$ is uniformly recurrent, $X_f$
is minimal, and therefore by Theorem~\ref{Theorem:Alvin_homeo},
$f|_{\omega(c)}$ is a minimal homeomorphism.
\end{proof}

In investigating the kneading and co-kneading maps for Example~\ref{Example: reg rec homeo}, we note that if we decompose the kneading sequence $A$ into a sequence of the form $\{A_n,B_n,C_n\}^\N$ (for any $n\in \N$), the following hold.
\begin{itemize}
    \item If a cutting time appears in the second position of a word $A_n$, then there will be a cutting time in the second position of the next word.
    \item If a cutting time appears in the second position of a word $B_{2k+1}$ (or $C_{2k}$), then there will be a cutting time at the end of that word.
    \item If a cutting time appears at the end of the word immediately before $B_{2k+1}$ (or $C_{2k}$), then there will be a cutting time at the second position of the next word.
    \item If a cutting time appears in the second position of a word $B_{2k}$ (or $C_{2k+1}$), then there will be a cutting time in the second position of the next word.
    \item If a cutting time appears at the end of the word immediately before $B_{2k}$ (or $C_{2k+1}$), then there will be a cutting time at the end of the $B_{2k}$ (or $C_{2k+1}$).
\end{itemize}

By using these facts, we observe that for all $n\geq 2$, the following hold.
\begin{itemize}
    \item $|A_n|$ is a co-cutting time and $\mathcal{R}(|A_n|) = |A_n| + |A_{n-1}|+\cdots + |A_1| + 3$.
    \item If $n$ is odd, then $|A_nA_nB_n|=3|A_n|$ is a cutting time and $\mathcal{R}(3|A_n|)=|A_{n-1}|+\cdots + |A_1| + 3$.
    \item If $n$ is even, then $|A_nA_nB_n|=3|A_n|$ is a co-cutting time and $\mathcal{R}(3|A_n|)=|A_{n-1}|+\cdots + |A_1| + 3$.
\end{itemize}

We thus conclude that both the kneading map $Q(k)$ and the co-kneading map $\widetilde{Q}(k)$ are unbounded and $\liminf Q(k) = \liminf \widetilde{Q}(k)=0$. %\Lori{Is it better to use the linear recurrence argument for the next statement? Does it matter?} \Jernej{I think it can stay as it is.} 
Since $\mathcal{R}(|A_n|)>|A_n|$ for all $n\in \N$, we may apply Lemma~\ref{Lemma:Long co-cuts not slowly recurrent} to obtain that this example is not slowly recurrent; we do not know whether this example satisfies the CE-condition. 
Lastly, the unimodal map does not have a persistently recurrent turning point: for every $\epsilon > 0$, there exists an $n$ large enough such that $c_{|A_n|}\in B(c,\epsilon)$ and $\widetilde{D}_{|A_n|} = [c_3,c_1]$.

\section{Diagram of inclusions and open questions}

Since several different notions of recurrence and combinatorial behavior appear throughout the paper, we include the schematic diagram in Figure~\ref{fig: the diagram} illustrating the currently known relations among the nine classes that are most relevant for the present work. We deliberately restrict attention only to these nine notions, since including further conditions would make the picture significantly more complicated and substantially harder to interpret. The diagram should be understood only on the level of set-theoretic inclusions and possible intersections; it is not intended to reflect topological, measure theoretic or density properties.

The black dots in the diagram indicate regions where no explicit examples are currently known to the authors, but where we strongly suspect that examples should exist. Likewise, question marks indicate relationships whose precise status remains unclear and we state them as open questions later in this section.

The largest class under consideration is the class of infinitely recurrent unimodal maps. Inside this class lie uniformly recurrent maps (condition (C9)) and, more restrictively, persistently recurrent maps ((C8)$\iff$(C7)). Persistent recurrence imposes strong combinatorial restrictions on the critical orbit and is closely related to the structure of return times and the kneading map. Another important subclass consists of maps for which the turning point is not critically monotonic (condition (C6)); this condition is closely connected with the existence of wild attractors.

On the more topological side, we consider regularly recurrent systems, minimal homeomorphisms, and strange adding machines. These classes are closely related to odometer-type dynamics, although the precise extent of this relationship in the unimodal setting is not yet fully understood. Finally, we also include longbranched maps, i.e.\ maps with bounded kneading map $Q$, whose combinatorics are highly constrained but still appear compatible with a surprisingly broad range of dynamical behaviors. The unresolved regions in Figure~\ref{fig: the diagram} naturally lead to several open problems. 

\begin{comment}
\begin{question}\label{Question: wild attractors}
Suppose that $f$ is an infinitely recurrent unimodal map such that the turning point is not critically monotonic (i.e.\ condition (C6) holds). Must $f$ admit a wild attractor?
\end{question}

\begin{question}\label{Question: longbranched}
Let $f$ be a longbranched infinitely recurrent unimodal map. Is it possible that $f|_{\omega(c)}$ is a minimal homeomorphism or even a strange adding machine?
\end{question}

\begin{question}\label{Question: persistently recurrent minimal}
Is there an example of a persistently recurrent unimodal map such that $f|_{\omega(c)}$ is a minimal homeomorphism which is not conjugate to an odometer?
\end{question}

\begin{question}\label{Question: regularly recurrent persistently recurrent}
Is there an example of a persistently recurrent unimodal map with regularly recurrent turning point that is not conjugate to a strange adding machine?
\end{question}
\end{comment}

\begin{figure}[ht]
\centering

\begin{tikzpicture}[x=1.6cm,y=1.5cm]

% colors
\definecolor{cInf}{RGB}{226,102,102}
\definecolor{cPer}{RGB}{ 10,110,255}
\definecolor{cUni}{RGB}{164,  0,220}
\definecolor{cReg}{RGB}{245, 105, 30}
\definecolor{cQ}{RGB}{235,150,  0}
\definecolor{cRen}{RGB}{150,100, 45}
\definecolor{cFib}{RGB}{150,150,150}
\definecolor{cLong}{RGB}{ 95,185, 65}
\definecolor{c5}{RGB}{ 20, 20,140}
\definecolor{cMono}{RGB}{  0,125, 70}
\definecolor{cSAM}{RGB}{ 90,205,195}
\definecolor{cMin}{RGB}{255,  0,0}
\definecolor{cSlow}{RGB}{  0,190,235}
\definecolor{cCE}{RGB}{100,100,100}

\tikzset{maincurve/.style={line width=1.0pt}}

% outer family
\draw[cInf,maincurve] (-0.1,-0.3) ellipse [x radius=5.00, y radius=3];
\draw[cUni,maincurve] (-0.08,-0.33) ellipse [x radius=4.28, y radius=2];
\draw[cPer,maincurve] (1.2,-0.08) ellipse [x radius=1.98, y radius=1.2];

% inner cluster inside blue
\draw[cMono,maincurve] (1.50,-0.17) ellipse [x radius=1.2, y radius=1];
%\draw[c5,maincurve] (0.41,-0.15) ellipse [x radius=0.9, y radius=0.68];
\draw[black,maincurve] (1.75,-0.195) ellipse [x radius=0.62, y radius=0.9];

% left-side ellipses
\draw[cReg,maincurve] (-0.5,-0.2) ellipse [x radius=2.6, y radius=1.15];
\draw[cLong,maincurve,rotate around={-10:(-1.50,1)}]
  (-3.1,-0.1) ellipse [x radius=2.2, y radius=0.4];
\draw[cMin,maincurve,rotate around={0:(-1.75,-0.30)}]
  (-0.6,-0.2) ellipse [x radius=2.9, y radius=0.55];
\draw[cSAM,maincurve,rotate around={0.5:(-1.68,-0.3)}]
  (0.02,-0.3) ellipse [x radius=1.8, y radius=0.28];
%\draw[cSlow,maincurve,rotate around={-8:(-1.22,-1.57)}]  (-1.22,-1.7) ellipse [x radius=0.4, y radius=1.2];

%\draw[cCE,maincurve,rotate around={-8:(-1.22,-1.57)}] (-1.22,-1.7) ellipse [x radius=0.7, y radius=1.38];

%\draw[cQ,line width=3pt,rotate around={4:(-0.22,-0.31)}] (-0.15,-0.2) ellipse [x radius=0.30, y radius=0.47];

%\draw[cFib,line width=1.5pt]  (-0.21,0.05) ellipse [x radius=0.15, y radius=0.15];

%\draw[cRen,line width=1.5pt] (-0.20,-0.4) ellipse [x radius=0.095, y radius=0.095];

\node at (-1.35,-0.215) {\small ?};
\node at (-2.1,0) {\small ?};
\node at (-3.2,-.1) {\small ?};
%\node at (-1.15,-0.7) {\tiny ?};
%\node at (-0.7,-1.1) {\small ?};
\node at (-0.15,0.18) {\small ?};
%\node at (-0.1,-0.3) {\small ?};
%\node at (0.7,-0.25) {\small ?};
%\node at (0.1,0.4) {\tiny ?};
%\node at (1.45,-0.3) {\small ?};
\node at (0.7,-0.3) {\small ?};
\node at (0.7,0.11) {\small ?};
%\node at (1.73,-0.3) {\tiny ?};
\node at (1.43,0.05) {\small ?};
%\node at (1.45,0.02) {\tiny ?};
\fill (2.6,0.5) circle (3pt);
\fill (2.4,0.3) circle (3pt);
%\fill (1.8,-0.9) circle (3pt);
\fill (-1,-1) circle (3pt);
\fill (1,0.5) circle (3pt);
\fill (0,0.6) circle (3pt);
\fill (1.5,0.3) circle (3pt);

\node at (-3.3,-2.15) {\color{cInf} \small 1};
\node at (-2.5,-1.5) {\color{cUni} \small 2};
\node at (2.9,-0.1) {\color{cPer} \small 3};
\node at (-2.2,-0.9) {\color{cReg} \small 6};
\node at (2.5,-0.1) {\color{cMono} \small 4};
\node at (1.8,0.5) {\color{black} \small 5};
\node at (-2,-0.5) {\color{cMin} \small 7};
\node at (-0.1,-0.3) {\color{cSAM} \small 8};
\node at (-5.1,0.6) {\color{cLong} \small 9};
%\node at (-1.3,-1.6) {\color{cSlow} \small 13};
%\node at (-1.77,-1.6) {\color{cCE} \small 14};

\end{tikzpicture}

\caption{\small
\textcolor{cInf}{1 - Infinitely recurrent}; \
\textcolor{cUni}{2 - uniformly recurrent (C9)}; \
\textcolor{cPer}{3 - persistently recurrent ((C8)$\iff$ (C7))}; \
\textcolor{cMono}{4 - not critically monotonic (C6)}; \
%\textcolor{c5}{5 - Condition (C5)}; \
\textcolor{black}{5 - wild attractor}; \
%\textcolor{cQ}{7 - $Q(k)\to\infty$ (C4)}; \
%\textcolor{cFib}{Fibonacci like (C1)}; \
%\textcolor{cRen}{infinitely renormalizable}; \
\textcolor{cReg}{6 - regularly recurrent}; \
\textcolor{cMin}{7 - minimal homeomorphism}; \
\textcolor{cSAM}{8 - strange adding machine}; \
\textcolor{cLong}{9 - longbranched $(Q(k)$ bounded)}.\\ We strongly believe that in the regions with $\bullet$, modifications of known examples give required examples, however are not, to our knowledge, provided in the literature yet.}
%\textcolor{cSlow}{slowly recurrent}; \
%\textcolor{cCE}{Collet-Eckmann condition}.} 
\label{fig: the diagram}
\end{figure}

The first question addresses the extent to which combinatorial constraints on the kneading map determine the existence of wild attractors (absorbing Cantor sets). It is known that (C6) is necessary, and that stronger conditions such as (C5) or (C4) impose additional control on return times. However, it remains open whether these conditions are sufficient in the presence of embedded odometer dynamics, that is, when $f|_{\omega(c)}$ is a minimal surjection that is not a homeomorphism.

\begin{question}\label{Question: wild attractors}
    Does every kneading sequence belonging to a unimodal map with an embedded strange odometer for which the turning point is not critically monotonic (i.e., (C6) holds) have combinatorics that allows for the existence of an absorbing Cantor set as long as the associated map has high enough order? If not, what if we assume either (C5) or (C4) holds?
\end{question}

The second question concerns the compatibility of longbranched combinatorics with minimal dynamics. Boundedness of the kneading map severely restricts the combinatorics of cutting times, yet it is not known whether such constraints exclude the possibility of minimal homeomorphisms or adding machine dynamics on the omega-limit set.

\begin{question}\label{Question: longbranched}
Let $f$ be a longbranched (infinitely) recurrent unimodal map. Is it possible that $f:\omega_{f}(c) \to \omega_f(c)$ is a minimal homeomorphism or even a strange adding machine?
\end{question}

The third question concerns the structure of minimal dynamics under strong recurrence assumptions. While regularly recurrent maps are typically associated with odometer-type behavior, it remains open whether more general minimal homeomorphisms can arise in this setting when persistent recurrence is imposed.

\begin{question}\label{Question: regularly recurrent, minimal, not odometer and persistently recurrent}
    Is there an example of a unimodal map $f$ that has regularly recurrent turning point, $f|_{\omega(c)}$ is a minimal homeomorphism that is not conjugate to an odometer, and $c$ is persistently recurrent?
\end{question}

The final question explores the interaction between regular recurrence and non-uniform hyperbolicity. Regular recurrence enforces strong recurrence properties of the critical orbit, while the Collet--Eckmann condition requires exponential growth along derivatives. It is unclear whether these two types of behavior can coexist without additional slow recurrence assumptions.

\begin{question}\label{Question: regularly recurrent, not slowly recurrent but CE}
    Is it possible for a map $f$ with regularly recurrent turning point $c$ to be such that $\mathcal{K}(f)$ is not slowly recurrent, but $f$ still satisfies the CE-condition?
\end{question}

\section{Acknowledgments}
L.~Alvin was partially supported by the Fulbright U.S. Scholar Program and the Henry Keith and Ellen Hard Townes endowed professorship at Furman University.
 J.~\v Cin\v c was partially supported by Slovenian research agency ARIS grant J1-4632 and ARIS project under Contract No. SN-ZRD/22-27/0552. 

\begin{table}[ht]
\begin{tabular}[t]{p{2.5cm}  p{11cm} }
\includegraphics [width=2.1cm]{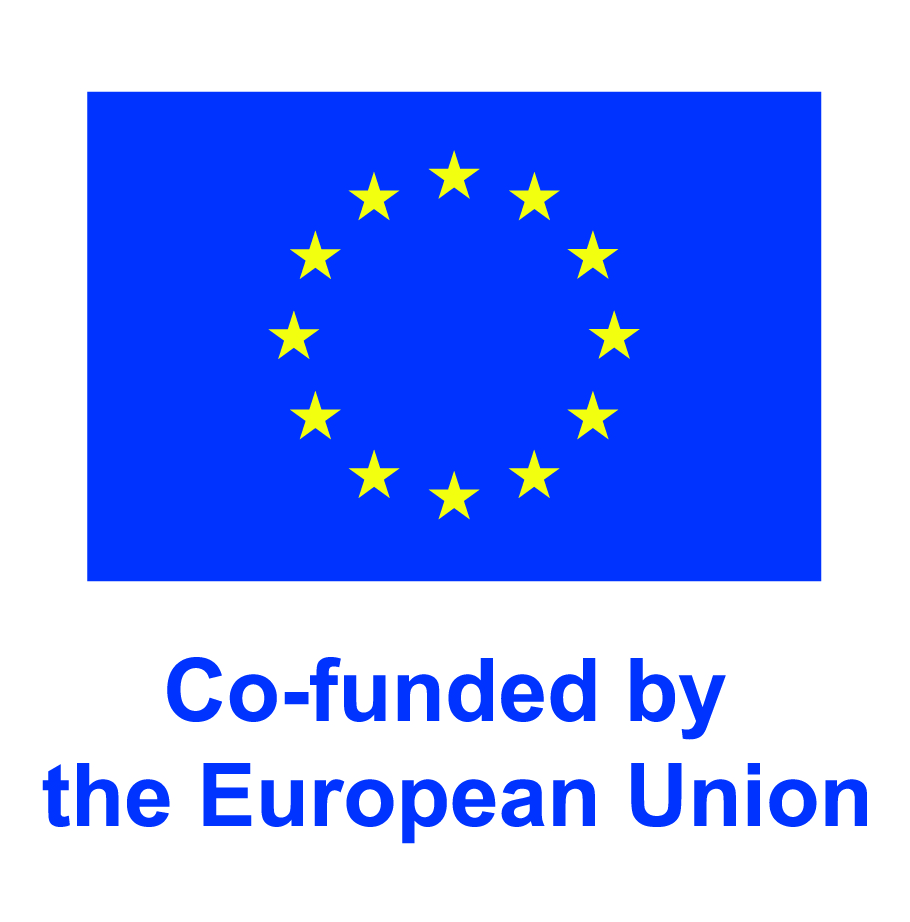} &
\vspace{-2cm}
This research is part of J.\ \v Cin\v c's  project that has received funding from the European Union's Horizon Europe research and innovation programme under the Marie Sk\l odowska-Curie grant agreement No.\ HE-MSCA-PF-PFSAIL-101063512.\\
\end{tabular}
\end{table}

\newpage

\bibliographystyle{plain}
\bibliography{BIBpersist} 
\end{document}